\def\a{\alpha}
\def\g{\gamma}
\def\d{\delta}
\def\t{\theta}
\def\L{\Lambda}
\def\s{\sigma}
\def\vt{\vartheta}
\def\ie{\textit{i.e., }}
\def\cf{\textit{cf. }}
\def\PP{\mathbb P}
\def\RR{\mathbb R}
\def\fcar{\mathds{1}}
\def\esp{\mathbf E}
\def\var{\mathbf{Var}}
\def\prob{\mathbf P}
\def\calN{\mathcal N}
\def\nzeroun{\mathcal{N}(0,1)}
\def\ie{\textit{i.e.}, }
\def\cf{\textit{cf. }}
\def\RR{\mathbb R}
\def\fcar{\mathds{1}}
\def\esp{\mathbf E}
\def\var{\mathbf{Var}}
\def\prob{\mathbf P}
\def\calN{\mathcal N}
\def\nzeroun{\mathcal{N}(0,1)}
\theoremstyle{plain}
\newtheorem{theorem}{Theorem}
\newtheorem{lemma}{Lemma}
\newtheorem{proposition}{Proposition}
\newtheorem{corollary}{Corollary}
\newtheorem*{theorem*}{Theorem}
\newtheorem*{proposition*}{Proposition}
\newtheorem*{corollary*}{Corollary}
\theoremstyle{remark}
\newtheorem*{remark*}{Remark}
\newtheorem*{note*}{Note}
\theoremstyle{definition}
\newtheorem*{definition*}{Definition}
\begin{document}

\begin{frontmatter}
\title{On estimation of nonsmooth functionals of sparse normal means}
\runtitle{Nonsmooth functional estimation}

\begin{aug}
\author{\snm{O.} \fnms{Collier}\thanksref{a,c,e1}\ead[label=e1,mark]{olivier.collier@parisnanterre.fr}},
\author{\snm{L.} \fnms{Comminges}\thanksref{b,c,e2}
\ead[label=e2,mark]{comminges@ceremade.dauphine.fr}}
\and
\author{\snm{A.B.} \fnms{Tsybakov}\thanksref{c,e3}%
\ead[label=e3,mark]{alexandre.tsybakov@ensae.fr}}

\address[a]{MODAL'X, Universit\'e Paris-Nanterre,
\printead{e1}}

\address[b]{CEREMADE, Universit\'e Paris-Dauphine,
\printead{e2}}

\address[c]{CREST,ENSAE,
\printead{e3}}

\runauthor{Collier, Comminges and Tsybakov}


\end{aug}

\begin{abstract}
\quad We  study the problem of estimation of $N_\g(\t) = \sum_{i=1}^d |\t_i|^\g$ for $\g>0$ and of the $\ell_\g$-norm of $\theta$ for $\g\ge1$ based on the observations $y_i = \t_i + \varepsilon \xi_i, \ i=1,\ldots,d$, where $\t=(\t_1, \dots, \t_d)$ are unknown parameters, $\varepsilon>0$ is known, and  $\xi_i$ are i.i.d. standard normal random variables. We find the non-asymptotic minimax rate for estimation of these functionals on the class of $s$-sparse vectors $\t$ and we propose estimators achieving this rate.

\end{abstract}

\begin{keyword}
\kwd{functional estimation}
\kwd{nonsmooth functional}
\kwd{sparsity}
\kwd{polynomial approximation}
\kwd{norm estimation}
\end{keyword}

\end{frontmatter}

\section{Introduction}\label{sec:introduction}

In recent years, there has been a growing interest in statistical estimation of non-smooth functionals (\cf \cite{CaiLow2011,JiaoVenkatHanWeissman2015,WuYang2015,WuYang2016,HanJiaoMukherjeeWeissman2017,HanJiaoWeissmanWu2017,CarpentierVerzelen2017,FukuchiSakuma2017}).
Some of these papers deal with the normal means model (\cf \cite{CaiLow2011,CarpentierVerzelen2017}) addressing the problems of estimation of the $\ell_1$-norm and of the sparsity index, respectively. In the present paper, we  
analyze a family of nonsmooth functionals including, in particular, the $\ell_1$-norm. We establish non-asymptotic minimax optimal rates of estimation on the classes of sparse vectors and we construct estimators achieving these rates.

Assume that we observe
\begin{equation}\label{model}
y_i = \t_i + \varepsilon \xi_i, \quad i=1,\ldots,d,
\end{equation}
where $\t=(\t_1, \dots, \t_d)$ is an unknown vector of parameters, $\varepsilon>0$ is a known noise level, and  
$\xi_i$ are i.i.d. standard normal random variables. We consider the problem of estimating  the functionals
\begin{equation*}
N_\g(\t) = \sum_{i=1}^d |\t_i|^\g, \quad \g>0, \quad  \text{and}\quad  \|\t\|_\g= \Big(\sum_{i=1}^d |\t_i|^\g\Big)^{1/\g}, \quad \g\ge 1,
\end{equation*}
assuming that the vector $\t$ is $s$-sparse, that is, $\t$ belongs to the class
$$
B_0(s)=\{ \t\in \RR^d : \|\t\|_0\le s\}.
$$ 
Here, $\|\t\|_0$ denotes the number of nonzero components of $\t$ and $s\in \{1,\dots,d\}$.

Set $n_\g(\t) =N_\g(\t)$  for $0<\g\le 1$ and $n_\g(\t) =\|\t\|_\g$  for $\g> 1$.
We measure the accuracy of an estimator $\hat{T}$ of $N_\g(\t)$ by the maximal quadratic risk over $B_0(s)$:
\begin{equation*}
\sup_{\t\in B_0(s)} \esp_\t\big[\big( \hat{T} - n_\g(\t) \big)^2\big].
\end{equation*}
Here and in the sequel, we denote by $\esp_\t$ the expectation with respect to the joint distribution $\prob_\t$
of $(y_1,\dots, y_d)$ satisfying \eqref{model}. 

In this paper, we propose rate-optimal estimators in a non-asymptotic minimax sense, that is, estimators 
${\hat T}_\g^*$ such that
\begin{equation*}
\sup_{\t\in B_0(s) } \esp_\t\big[\big( \hat{T}_\g^* - n_\g(\t) \big)^2\big] \asymp
	\inf_{\hat{T}}\sup_{\t\in B_0(s)} \esp_\t\big[\big( \hat{T} - n_\g(\t) \big)^2\big] := {\mathcal R}_{s,d}(\varepsilon,\g),
\end{equation*}
where $\inf_{\hat{T}}$ denotes the infimum over all estimators and, for two quantities $a$ and $b$ 
possibly depending on $s,d,\varepsilon, \g$, we write $a\asymp b$ if there exist positive
constants $c', c''$ that may depend only on $\g$ such that $c'\le a/b\le c''$.  {\color{black}We establish the following explicit
non-asymptotic  characterization of the minimax risk: 
\begin{equation}\label{optrate0}
	{\mathcal R}_{s,d}(\varepsilon,\g) \asymp
	\begin{cases}
	\ \varepsilon^{2\g}s^{2}\log^\g(1+d/s^2) ,\quad &\text{ if } s\le \sqrt{d} \text{ and }  0<\g\le 1,\\
			\ \varepsilon^{2\g}s^{2}{\log^{-\g}(1+s^2/d)} ,\quad &\text{ if } s> \sqrt{d} \text{ and } 0<\g \le 1,\\		
		\end{cases}
\end{equation}
and
\begin{equation}\label{optrate}
	{\mathcal R}_{s,d}(\varepsilon,\g) \asymp
	\begin{cases}
	\ \varepsilon^{2}s^{2/\g}\log(1+d/s^2) ,\quad &\text{ if } s\le \sqrt{d} \text{ and }  \g>1,\\
			\ \varepsilon^{2}d^{1/\g} ,\quad &\text{ if } s> \sqrt{d} \text{ and } \g \in E,		
		\end{cases}
\end{equation}
where $E$ is the set of all even integers. We also prove that, in the remaining case $s> \sqrt{d}$ and $\g>1$ such that $\g\not \in E$, we have
\begin{equation}\label{optrate1}
c\varepsilon^{2}s^{2/\g}{\log^{1-2\g}(1+s^2/d)}\le {\mathcal R}_{s,d}(\varepsilon,\g)\le \bar c \varepsilon^{2}s^{2/\g}{\log^{-1}(1+s^2/d)}  
\end{equation}   
for some positive constants $c,\bar c$.}

The case $s=d$, $\g=\varepsilon=1$ was studied in \citet*{CaiLow2011}, where it was proved that 
\begin{equation*}\label{rate_cai}
	{\mathcal R}_{d,d}(1,1) = \inf_{\hat{T}}\sup_{\t\in \RR^d} \esp_\t\big[\big( \hat{T} - N_1(\t) \big)^2\big] \asymp \frac{d^2}{\log d}.
\end{equation*}
It was also claimed in  \citet*{CaiLow2011} that ${\mathcal R}_{s,d}(1,1)\asymp s^2/(\log d)$ for $s\ge d^{\beta}$ with $\beta>1/2$, which agrees with the corresponding special case of \eqref{optrate0}. 

We see from \eqref{optrate0} and \eqref{optrate} that, for the general sparsity classes $B_0(s)$ there exist two different regimes with an elbow at $s\asymp \sqrt{d}$. We call them the sparse zone and the dense zone. The estimation methods for these two regimes are quite different. In the sparse zone, where $s$ is smaller than $\sqrt{d}$, we show that one can use suitably adjusted thresholding to achieve optimality. In this zone, rate optimal estimators can be obtained based on  the techniques developed in  \cite{CCT2017} to construct minimax optimal estimators of linear and quadratic functionals. In the dense zone, where $s$ is greater than $\sqrt{d}$, we use another approach. We follow the general scheme of estimation of non-smooth functionals from \cite{LepskiNemirovskiSpokoiny1999} and our construction is especially close in the spirit to \cite{CaiLow2011}. Specifically, we consider the best polynomial approximation of the function $|x|^{\g}$ in a neighborhood of the origin and plug in unbiased estimators for each power in the expression of this polynomial.  Outside of this neighborhood, for $i$ such that $|y_i|$ is, roughly speaking, greater than the "noise level" of the order of $\varepsilon\sqrt{\log d}$, we use $|y_i|^\g$ as an estimator of $|\t_i|^\g$.
The main difference from the estimator suggested in~\cite{CaiLow2011} for $\g=1$ lies in the fact that, for the polynomial approximation part, we need to introduce a block structure with exponentially increasing blocks and carefully chosen thresholds depending on~$s$. This is needed to achieve optimal bounds for all $s$ in the dense zone and not only for $s=d$ or $s$ comfortably greater than $\sqrt{d}$ as in ~\cite{CaiLow2011}.

%

In the present work, the variance $\varepsilon^2$ of the noise and the sparsity parameter $s$ need to be known exactly. 
We conjecture that adaptation to $\varepsilon^2$ can be done without loss of the rate in the sparse zone $s\le \sqrt{d}$. In the dense zone, the optimal rate can deteriorate dramatically when $\varepsilon^2$ is unknown as shown in~\cite{CCT2018} for the case $\g=2$.
This contrasts with the results for linear functionals. Indeed, in~\cite{CCTV2018} it is proved that, for linear functionals, adaptation to $\varepsilon^2$ can be done without loss in the rate, and adaptation to $s$ only brings a logarithmically small deterioration of the rate. 

This paper is organized as follows. In Section \ref{section_upperbounds}, we introduce the estimators and state the upper bounds for their risks. Section~\ref{sec:lower} provides the matching lower bounds. The rest of the paper is devoted to the proofs.  In particular, some useful results from approximation theory are collected in Section \ref{sec:approximation}.

\section{Definition of estimators and upper bounds for their risks}\label{section_upperbounds}

In this section, we propose two different estimators, for the dense and sparse regimes defined by the inequalities $s^2\ge 4d$ and $s^2< 4d$, respectively. Recall that, in the Introduction, we used the inequalities $s\ge \sqrt{d}$ and $s<  \sqrt{d}$, respectively, to define the two regimes. The factor~4 that we introduce in the definition here is a matter of convenience for the proofs. We note that such a change does not influence the final 
result since the optimal rate (cf. \eqref{optrate}) is the same, up to a constant, for all $s$ such that $s\asymp  \sqrt{d}$.

\subsection{Dense zone: $s^2\ge 4d$}

{\color{black} We first study the problem of estimation of $n_\g(\t)$ in the dense zone. Two estimators will be proposed -- the first one that achieves optimality when $\g$ is not an even integer, and the second one for even integer $\gamma$. They are derived from two estimators of $N_\g(\t)$ that we are going to define now.

	We first present the estimator of $N_\g(\t)$ that will be used when $\g$ is not an even integer.} For any positive integer $K$, we denote by $P_{\g,K}(\cdot)$ the best approximation of $|x|^\g$ by polynomials of degree at most $2K$ on the interval $[-1,1]$, that is 
\begin{equation*}\label{}
 \max_{x\in [-1,1]} \Big||x|^\g-P_{\g,K}(x)\Big|= \min_{G\in \mathcal{P}_{2K}} \max_{x\in [-1,1]} \Big| |x|^\g-G(x)\Big|,
\end{equation*}
where $\mathcal{P}_K$ is the class of all real polynomials of degree at most $K$. 	
Since $|x|^\g$ is an even function, it suffices to consider approximation by polynomials of even degree.
	The quality of the best polynomial approximation of $|x|^\g$ is described by Lemma~\ref{approx} in Section~\ref{sec:approximation}.
	
		We denote by $a_{\g,2k}$ the coefficients of the canonical representation of $P_{\g,K}$:  
	\begin{equation*}\label{}
		 P_{\g,K}(x) = \sum_{k=0}^K a_{\g,2k} x^{2k}, \quad x \in \RR,
	\end{equation*}
	and by $H_k(\cdot)$ the $k$th Hermite polynomial
	\begin{equation*}\label{definition_Hermite}
		\ H_k(x) = (-1)^k e^{x^2/2}\frac{d^k}{d x^k} e^{-x^2/2}, \quad k\in{\mathbb N}, \quad x\in \RR.
	\end{equation*}
	To construct our first estimator in the dense zone, we use the sample duplication device, \ie 
	we transform $y_i$ into randomized observations $y_{1,i},y_{2,i}$ as follows.
Let  $z_1,\dots, z_d$ be i.i.d. random variables such that $z_i\sim \mathcal{N}(0,\varepsilon^2)$ and $z_1,\dots, z_d$ are independent of $y_1,\dots,y_d$.  Set
	\begin{equation*}\label{duplicatedsample}
	y_{1,i} = y_i + z_i, \qquad
		\ y_{2,i} = y_i - z_i, \qquad  i=1,\dots,d.
	\end{equation*}
		Then, $y_{1,i} \sim \mathcal{N}(\t_i,\s^2)$, $y_{2,i} \sim \mathcal{N}(\t_i,\s^2)$ for $i=1,\dots, d,$
where $\s^2=2\varepsilon^2$ and the random variables $(y_{1,1},\dots, y_{1,d}, y_{2,1},\dots, y_{2,d})$ are mutually independent.
		 
		 Define the estimator of $N_\g$ as follows:
	\begin{equation}\label{definition_estimateurgamma}
		\hat{N}_\g = \sum_{i=1}^d \xi_\g(y_{1,i},y_{2,i})
	\end{equation}
where $$\xi_\g(u,v) = \sum_{l=0}^L\hat{P}_{\g,K_l,M_l}(u) \fcar_{\s t_{l-1}<|v|\le\s t_l} + |u|^\g \fcar_{|v|>\s t_L},
$$ 
and
	\begin{equation}\label{parametres}
		\begin{cases}
			\ \hat{P}_{\g,K,M}(u) = \sum_{k=1}^K \s^{2k} a_{\g,2k} M^{\g-2k} H_{2k}(u/\s), \phantom{\Big()} \\
			\ K_l = 4^lc\log(s^2/d), \phantom{\Big()} \\
			\ M_l = 2^{l+1}\s\sqrt{2\log(s^2/d)}, \phantom{\Big()} \\
			\ t_l = 2^l \sqrt{2\log(s^2/d)},\, t_{-1}=0, \phantom{\Big()} \\
			\ L \text{ is the smallest integer such that } 2^L \ge {3}\sqrt{\log(d)/\log(s^2/d)}. \phantom{\Big()}
		\end{cases}
	\end{equation}	
	Here and in what follows $\fcar_{\{\cdot\}}$ denotes the indicator function, and $c>0$ is a constant that will be chosen small enough (see the proof of Theorem~\ref{theorem_upperbound1} below).


The next theorem provides an upper bound on the risk of $\hat{N}_\g$ as estimator of $N_\g(\t)$ in the dense zone.

\begin{theorem}\label{theorem_upperbound1}
	 Let the integers $d$ and $s\in \{1,\dots,d\}$ be such that $s^2\ge4d$ and let $\g>0$. Then for any $\t\in B_0(s)$ the estimator defined in~(\ref{definition_estimateurgamma}) satisfies
{\color{black}	\begin{equation*}
		\esp_\t \big[\big(\hat{N}_\g-N_\g(\t)\big)^2\big] \le C\left(\frac{\varepsilon^{2\g}s^2}{\log^\g(s^2/d)} + \frac{ \varepsilon^2 s^{2/\g}}{\log(s^2/d)} \|\t\|_{\g}^{2\g-2}
		 \fcar_{\g>1}
		\right)  
	\end{equation*}}
	where $C>0$ is a constant depending only on $\g$. 
\end{theorem}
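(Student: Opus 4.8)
The plan is to use the independence across coordinates created by the sample‑duplication device and to bound the squared bias and the total variance of $\hat N_\g$ separately, using the approximation estimates of Lemma~\ref{approx}. Since $(y_{1,1},\dots,y_{1,d},y_{2,1},\dots,y_{2,d})$ are mutually independent, the summands $\xi_\g(y_{1,i},y_{2,i})-|\t_i|^\g$ are independent over $i$, so that
\[
\esp_\t\big[(\hat N_\g-N_\g(\t))^2\big]=\Big(\sum_{i=1}^d b_i\Big)^2+\sum_{i=1}^d v_i ,
\]
where $b_i=\esp_\t[\xi_\g(y_{1,i},y_{2,i})]-|\t_i|^\g$ and $v_i=\var_\t(\xi_\g(y_{1,i},y_{2,i}))$, and it is enough to bound the total bias $\sum_i b_i$ and $\sum_i v_i$. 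Two facts will be used throughout: (i) since $\esp[\s^kH_k(U/\s)]=\mu^k$ for $U\sim\calN(\mu,\s^2)$, conditionally on $y_{2,i}$ the variable $\hat P_{\g,K_l,M_l}(y_{1,i})$ is an unbiased estimator of $M_l^\g\big(P_{\g,K_l}(\t_i/M_l)-a_{\g,0}\big)$, which by Lemma~\ref{approx} differs from $|\t_i|^\g$ by at most $CM_l^\g K_l^{-\g}$ as soon as $|\t_i|\le M_l$, and is controlled by the size of $\t_i/M_l$ in general; and (ii) the indicators $\fcar_{\s t_{l-1}<|y_{2,i}|\le\s t_l}$ and $\fcar_{|y_{2,i}|>\s t_L}$ act as noisy tests of whether $|\t_i|$ lies below the $l$‑th scale $M_l\asymp 2^l\varepsilon\sqrt{\log(s^2/d)}$, respectively above $M_L\asymp\varepsilon\sqrt{\log d}$.

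For the bias I would split the $d$ coordinates into the null ones ($\t_i=0$), the nonzero ones with $|\t_i|\le M_L$, and those with $|\t_i|>M_L$. For a null coordinate all Hermite terms vanish at $\t_i=0$, so $b_i=\esp_\t|y_{1,i}|^\g\,\prob(|y_{2,i}|>\s t_L)=O(\varepsilon^\g d^{-9})$ because $t_L\ge3\sqrt{2\log d}$, which is negligible after summation. For $|\t_i|\le M_L$, on each block $l$ with $|\t_i|\le M_l$ the polynomial part lies within $CM_l^\g K_l^{-\g}\le C\varepsilon^\g 2^{-l\g}\log^{-\g/2}(s^2/d)$ of $|\t_i|^\g$, and summing this geometric series gives a per‑coordinate contribution $\le C\varepsilon^\g\log^{-\g/2}(s^2/d)$; the event $\{|y_{2,i}|>\s t_L\}$, which is likely only when $|\t_i|\gtrsim\varepsilon\sqrt{\log d}$, contributes the bias of $|y_{1,i}|^\g$, which is $O(\varepsilon^2|\t_i|^{\g-2})$ on that event by Taylor expansion of $|x|^\g$ around $\t_i$ (the linear term averaging to zero and the region where $|y_{1,i}|$ is far from $\t_i$ being Gaussian‑negligible since $|\t_i|/\s\gtrsim\sqrt{\log d}$). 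The delicate part is the ``leakage'': if $|y_{2,i}|$ falls into a block $l$ with $M_l<|\t_i|$, the estimator $\hat P_{\g,K_l,M_l}$ may be large because $M_l^\g$ and the coefficients $a_{\g,2k}$ grow with $l$, but such an event has probability at most $\exp(-c'4^l\log(s^2/d))$ with $c'>0$ bounded below (using the factor $3$ in the definition of $L$), and since $4^l\log(s^2/d)\asymp K_l/c$ this tail dominates the growth of the estimator once $c$ is small. Collecting, up to negligible terms $\big|\sum_i b_i\big|\le C\varepsilon^\g s\log^{-\g/2}(s^2/d)+C\varepsilon^2\sum_{i:\,|\t_i|\gtrsim\varepsilon\sqrt{\log d}}|\t_i|^{\g-2}$: squaring the first piece gives $\varepsilon^{2\g}s^2\log^{-\g}(s^2/d)$, and for the second piece one notes that if $\g\le1$ then $|\t_i|^{\g-2}\le(c''\varepsilon\sqrt{\log d})^{\g-2}$ on the range of summation so that, since $\log(s^2/d)\le\log d$, the resulting bound is absorbed into $\varepsilon^{2\g}s^2\log^{-\g}(s^2/d)$, while if $\g>1$ one writes $m=\#\{i:|\t_i|\gtrsim\varepsilon\sqrt{\log d}\}$ and combines $m\le s$ with $m\le C\|\t\|_\g^\g/(\varepsilon\sqrt{\log d})^\g$, so that Hölder's inequality yields $\big(\sum_i|\t_i|^{\g-2}\big)^2\le m^{4/\g}\|\t\|_\g^{2\g-4}\le Cs^{2/\g}\varepsilon^{-2}\log^{-1}(d)\|\t\|_\g^{2\g-2}$, and $\log^{-1}(d)\le\log^{-1}(s^2/d)$ turns this, after multiplication by $\varepsilon^4$, into the second term of the theorem.

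For the variance, the plan is to use $v_i\le\esp_\t[\xi_\g(y_{1,i},y_{2,i})^2]$ and, conditioning on $y_{2,i}$, to reduce to controlling $\esp_\t[\hat P_{\g,K_l,M_l}(y_{1,i})^2]$ weighted by the block probabilities, plus an $\esp_\t[|y_{1,i}|^{2\g}]\prob(|y_{2,i}|>\s t_L)$ term. Expanding the square and using the second‑moment estimates for Hermite polynomials of a shifted Gaussian together with the coefficient bound of Lemma~\ref{approx}, one gets on block $l$ a bound of the form $M_l^{2\g}$ times a sum over $k\le K_l$ of products of $a_{\g,2k}^2$, a Hermite moment, and $(\s^2/M_l^2)^{2k}$; since $\s^2/M_l^2\asymp(4^l\log(s^2/d))^{-1}$ and the degree is at most $2K_l=2\cdot4^lc\log(s^2/d)$, the factorial and coefficient growth are killed once $c$ is small, leaving $\esp_\t[\hat P_{\g,K_l,M_l}(y_{1,i})^2]\le CM_l^{2\g}(s^2/d)^{\kappa}$ with $\kappa>0$ arbitrarily small when $|\t_i|\lesssim\s t_l$, while for larger $|\t_i|$ the factor $\prob(\text{block }l)$ absorbs the excess exactly as in the bias analysis. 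Summing over the $\ge d-s$ null coordinates, for which only the $l=0$ term matters, gives $Cd\,M_0^{2\g}(s^2/d)^{\kappa}\asymp Cd\varepsilon^{2\g}\log^\g(s^2/d)(s^2/d)^{\kappa}\le C\varepsilon^{2\g}s^2\log^{-\g}(s^2/d)$, using $\log^{2\g}(s^2/d)(s^2/d)^{\kappa}\le C(s^2/d)$ for $s^2/d\ge4$; the at most $s$ nonzero coordinates contribute at the same order or less (here one uses $s\ge2\sqrt d$), and the $|y_{1,i}|^{2\g}$ term is controlled by $\esp_\t|y_{1,i}|^{2\g}\le C(|\t_i|^{2\g}+\varepsilon^{2\g})$ together with the tail bound on $\prob(|y_{2,i}|>\s t_L)$. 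This gives $\sum_i v_i\le C\varepsilon^{2\g}s^2\log^{-\g}(s^2/d)$, and combined with the bias bound it yields the theorem.

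I expect the main obstacle to be precisely the leakage/tail estimates appearing in both steps: one must show that the Gaussian probability of $|y_{2,i}|$ landing in a block whose scale $M_l$ is inappropriate for $|\t_i|$ decays fast enough to absorb the growth with $l$ of $M_l^\g$, of the coefficients $a_{\g,2k}$, and of the Hermite moments. This is exactly what dictates the whole block construction — the exponential degrees $K_l=4^lc\log(s^2/d)$, the geometric thresholds $M_l,t_l$, the number of blocks $L$ and the constant $3$ in its definition, and the requirement that $c$ be small — and the calibration has to be carried out against the approximation rate $K^{-\g}$ and the coefficient bound of Lemma~\ref{approx}.
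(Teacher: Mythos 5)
Your overall architecture --- the exact bias--variance decomposition made possible by the independence created by sample duplication, blockwise control of the polynomial part via Lemma~\ref{approx}, the leakage/tail calibration of the thresholds against the growth of $M_l^\g$, of the coefficients and of the Hermite moments, and the use of $|y_{1,i}|^\g$ beyond the scale $\varepsilon\sqrt{\log d}$ --- is exactly the paper's. The genuine gap is in your variance bound for $\g>1$. You conclude $\sum_i v_i\le C\varepsilon^{2\g}s^2\log^{-\g}(s^2/d)$ from $v_i\le\esp_\t[\xi_\g^2(y_{1,i},y_{2,i})]$ together with $\esp_\t|y_{1,i}|^{2\g}\le C(|\t_i|^{2\g}+\varepsilon^{2\g})$ and ``the tail bound on $\prob(|y_{2,i}|>\s t_L)$''. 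But for a coordinate with $|\t_i|\gg\s t_L$ that probability is close to $1$, so the crude second-moment bound gives $\approx|\t_i|^{2\g}$, which is unbounded over $B_0(s)$ and cannot be absorbed into $\varepsilon^{2\g}s^2\log^{-\g}(s^2/d)$. For such coordinates one must compute the centered variance of $|y_{1,i}|^\g\,\fcar_{|y_{2,i}|>\s t_L}$, which is at most $C[\s^2|\t_i|^{2\g-2}+|\t_i|^{2\g}\prob(|X|\le\s t_L)]$; the second piece is $\le C\s^{2\g}$ because $|\t_i|>2\s t_L$, but the first piece survives and, summed over $S$ and bounded by H\"older as $\s^2\sum_i|\t_i|^{2\g-2}\le\s^2 s^{1/\g}\|\t\|_\g^{2\g-2}$, is precisely one of the sources of the second term in the theorem (a single large coordinate shows it cannot be removed). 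As written, your argument does not prove the stated bound for $\g>1$.

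A second, smaller slip occurs in the bias step: the claimed H\"older inequality $\big(\sum_i|\t_i|^{\g-2}\big)^2\le m^{4/\g}\|\t\|_\g^{2\g-4}$ is valid only for $\g\ge2$; for $1<\g<2$ the exponent $\g-2$ is negative and the inequality can fail (e.g.\ $\t=(T,2T)$ with $\g=3/2$ gives $2.91\,T^{-1}$ on the left versus $2.60\,T^{-1}$ on the right). The paper's route avoids this: on the relevant range $|\t_i|\gtrsim\s\sqrt{\log(s^2/d)}$ one first writes $\s^2|\t_i|^{\g-2}\le C\s|\t_i|^{\g-1}/\sqrt{\log(s^2/d)}$ and then applies H\"older with the positive exponent $\g-1$, giving $\big(\sum_{i\in S}|\t_i|^{\g-1}\big)^2\le s^{2/\g}\|\t\|_\g^{2\g-2}$ for every $\g>1$. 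With these two repairs your plan coincides with the paper's proof.
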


{\color{black}
Inspection of the proof in Section \ref{sec:proof:upper} shows that the block structure of the estimator is needed to retrieve the sharp logarithmic factor $\log^\g(s^2/d)$ in the rate for all $s\ge 2\sqrt{d}$. If $s$ is substantially greater than $\sqrt{d}$, for example,  $d^a<s<d$ for some $a>1/2$, then this logarithmic factor is equivalent to $\log^\g(d)$, and it is enough to use the estimator with only two blocks in order to obtain the result.

 Although Theorem~\ref{theorem_upperbound1} is valid for all $\g>0$ it will be useful for us only when $\g$ is not an even integer since there exist estimators achieving better rates in the dense zone for even integers $\g$. We now provide a construction of such an estimator.

Indeed, assume more generally that $\g$ is an integer, not necessarily even.
 We now use the sample "cloning" device as above but instead of creating two independent randomized samples,
 we create~$\g$ independent randomized samples $(y_{i,m}, 1,\dots, d), \ m=1,\dots,\g,$  with variance multiplied by $\g$:
    $$y_{i,m}=\t_i+\varepsilon \sqrt{\g}\,\xi_{i,m}, \quad \g,$$ 
    where $\xi_{i,m}$ are i.i.d. standard normal random variables (see \cite{Nemirovski2000} for details).
    
We can now estimate the value $\sum_{i=1}^d  \t_i^{\g}$ by
    \begin{equation}\label{tildeN}
    \tilde{N}_\g = \sum_{i=1}^d \prod_{m=1}^{\g}y_{i,m}.
    \end{equation}  
Since $\esp \Big( \prod_{m=1}^{\g}y_{i,m}\Big)= \t_i^{\g}$ we find immediately that $\tilde{N}_\g$ is an unbiased estimator of $\sum_{i=1}^d  \t_i^{\g}$: 
    $$\esp \Big( \sum_{i=1}^d \prod_{m=1}^{\g}y_{i,m}\Big)= \sum_{i=1}^d  \t_i^{\g}.$$
If $\g$ is an even integer, 
	$\sum_{i=1}^d  \t_i^{\g} =N_\g(\t).$     
    The risk of $\tilde{N}_\g$ admits the following bound.

\begin{theorem}\label{theorem_upperbound_even}
	 Let $\g$ be an integer. Then, for any $\t\in \RR^d$ we have 
	\begin{equation*}
		 \esp_\t \big[\big(\tilde{N}_\g-\sum_{i=1}^d  \t_i^{\g}\big)^2\big] \le C\left(\varepsilon^{2\g}d + \varepsilon^2 \|\t\|_{2\g-2}^{2\g-2}\right)  
	\end{equation*}
	where $C>0$ is a constant depending only on $\g$. In particular, if $\g$ is an even integer we have here 
	$\sum_{i=1}^d  \t_i^{\g} =N_\g(\t).$ 
\end{theorem}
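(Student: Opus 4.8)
The plan is to use the fact, already recorded above, that $\tilde N_\g$ is an \emph{unbiased} estimator of $\sum_{i=1}^d \t_i^\g$, so that its quadratic risk equals its variance. Writing $X_i:=\prod_{m=1}^\g y_{i,m}$, the cloned observations $y_{i,m}=\t_i+\varepsilon\sqrt\g\,\xi_{i,m}$ are built from mutually independent noises $\xi_{i,m}$, so the $X_i$ are independent across $i$; hence $\esp_\t\big[\big(\tilde N_\g-\sum_{i}\t_i^\g\big)^2\big]=\sum_{i=1}^d\var_\t(X_i)$, and it suffices to control $\var_\t(X_i)$ for a single coordinate.

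For fixed $i$, the variables $y_{i,1},\dots,y_{i,\g}$ are i.i.d.\ $\mathcal N(\t_i,\g\varepsilon^2)$, so by independence within the block $\esp_\t[X_i^2]=\prod_{m=1}^\g\esp_\t[y_{i,m}^2]=(\t_i^2+\g\varepsilon^2)^\g$, and therefore $\var_\t(X_i)=(\t_i^2+\g\varepsilon^2)^\g-\t_i^{2\g}$. Expanding by the binomial theorem gives $\var_\t(X_i)=\sum_{k=1}^\g\binom{\g}{k}\g^k\varepsilon^{2k}\t_i^{2(\g-k)}$, a sum in which the term $k=\g$ is the pure-noise contribution $\g^\g\varepsilon^{2\g}$ and the term $k=1$ is $\g^2\varepsilon^2\t_i^{2\g-2}$. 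The point is that all intermediate terms are dominated by these two extremes.

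To make this precise I would set $t=\t_i^2/\varepsilon^2$ (the case $\varepsilon=0$ being trivial), so that $\var_\t(X_i)=\varepsilon^{2\g}P(t)$ with $P(t)=\sum_{k=1}^\g\binom{\g}{k}\g^k t^{\g-k}$ a polynomial of degree $\g-1$ with nonnegative coefficients and $P(1)=(1+\g)^\g-1$. For $t\le1$ all powers $t^{\g-k}$ are $\le1$, so $P(t)\le P(1)$; for $t\ge1$ all powers are $\le t^{\g-1}$, so $P(t)\le P(1)\,t^{\g-1}$. In either case $P(t)\le P(1)\,(1+t^{\g-1})$, and substituting back yields $\var_\t(X_i)\le C(\g)\big(\varepsilon^{2\g}+\varepsilon^2\t_i^{2\g-2}\big)$ with $C(\g)=(1+\g)^\g-1$.

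Summing over $i=1,\dots,d$ gives $\esp_\t\big[\big(\tilde N_\g-\sum_{i}\t_i^\g\big)^2\big]\le C(\g)\big(\varepsilon^{2\g}d+\varepsilon^2\sum_{i=1}^d\t_i^{2\g-2}\big)=C(\g)\big(\varepsilon^{2\g}d+\varepsilon^2\|\t\|_{2\g-2}^{2\g-2}\big)$, which is the claimed bound; the final assertion is just the identity $\sum_i\t_i^\g=N_\g(\t)$ valid when $\g$ is even. There is essentially no serious obstacle here: the computation of $\esp_\t[X_i^2]$ is immediate from independence and normality, and the only mildly delicate step is the elementary bound on $P(t)$, where splitting into $t\le1$ and $t\ge1$ is what keeps the constant independent of $t$, hence of $\t$.
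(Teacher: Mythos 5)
Your proposal is correct and follows essentially the same route as the paper: unbiasedness plus independence across coordinates reduces the risk to $\sum_i \var_\t\bigl(\prod_m y_{i,m}\bigr)$, the second moment factorizes as $(\t_i^2+\g\varepsilon^2)^\g$, and the binomial expansion is bounded by its two extreme terms. The only difference is that you spell out the elementary domination of the intermediate binomial terms (via the split $t\le1$ versus $t\ge1$), which the paper leaves implicit in the inequality $\sum_{j=1}^\g \binom{\g}{j}\t_i^{2(\g-j)}\s_*^{2j}\le C(\s_*^2\t_i^{2(\g-1)}+\s_*^{2\g})$.
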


Note that this theorem is valid for any sparsity $s$ but we will use it only in the dense zone since in the sparse zone there exist better 
estimators achieving the optimal rate, cf. Section \ref{sec:sparse_upper} below.

As a consequence of Theorems \ref{theorem_upperbound1} and \ref{theorem_upperbound_even}, we obtain the following result for estimation of the norm $\|\t\|_{\g}$.

\begin{theorem}\label{corollary_upperbound1}
	(i) Let the integers $d$ and $s\in \{1,\dots,d\}$ be such that $s^2\ge4d$ and let $\g>1$. Set   
	$\hat{n}_\g = |\hat{N}_\g|^{1/\g}$, where $\hat{N}_\g$ is defined in~(\ref{definition_estimateurgamma}). Then
	\begin{equation}\label{upper_dense}
		\sup_{\t\in B_0(s)} \esp_\t \big[\big(\hat{n}_\g-\|\t\|_{\g} \big)^2\big] \le C\frac{\varepsilon^{2}s^{2/\g}}{\log(s^2/d)},
	\end{equation}
	where $C>0$ is a constant depending only on $\g$.
	
	(ii)  Let $\g$ be an even integer. Set $\tilde{n}_\g = |\tilde{N}_\g|^{1/\g}$, where $\tilde{N}_\g$ is defined in \eqref{tildeN}. Then
	\begin{equation}\label{upper_even}
		 \sup_{\t\in \RR^d} \esp_\t \big[\big(\tilde{n}_\g- \|\t\|_{\g}\big)^2\big] \le C \varepsilon^{2}d^{1/\g},
		 \end{equation}
	where $C>0$ is a constant depending only on $\g$. 
\end{theorem}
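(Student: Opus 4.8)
The plan is to derive both parts from the already established risk bounds on $\hat N_\g$ and $\tilde N_\g$ as estimators of $N_\g(\t)=\sum_{i=1}^d|\t_i|^\g$ (and of $\sum_{i=1}^d\t_i^\g$), using only two elementary inequalities for the concave function $t\mapsto t^{1/\g}$ on $[0,\infty)$, valid since $\g>1$: for all $a\ge0$ and $b>0$,
\[
  \big|a^{1/\g}-b^{1/\g}\big|\le |a-b|^{1/\g}
  \qquad\text{and}\qquad
  \big|a^{1/\g}-b^{1/\g}\big|\le \frac{2\,|a-b|}{b^{\,1-1/\g}},
\]
the first being subadditivity, the second following from the mean value theorem when $a$ and $b$ are comparable and directly otherwise. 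Taking $a=|\hat N_\g|$, $b=N_\g(\t)$, and using $\big||\hat N_\g|-N_\g(\t)\big|\le|\hat N_\g-N_\g(\t)|$ together with $N_\g(\t)^{1-1/\g}=\|\t\|_\g^{\g-1}$, these give the pointwise bounds
\[
  \big(\hat n_\g-\|\t\|_\g\big)^2\le |\hat N_\g-N_\g(\t)|^{2/\g},
  \qquad
  \big(\hat n_\g-\|\t\|_\g\big)^2\le \frac{4\,\big(\hat N_\g-N_\g(\t)\big)^2}{\|\t\|_\g^{2\g-2}},
\]
and analogous ones for $\tilde n_\g$. I will use the first bound when $\|\t\|_\g$ is small and the second when it is large, the threshold being the target rate itself.

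For part~(i), set $r=\varepsilon^2 s^{2/\g}/\log(s^2/d)$ and distinguish the cases $\|\t\|_\g^2\le r$ and $\|\t\|_\g^2> r$. In the first case I take expectations in the Hölder bound, move the expectation inside by Jensen's inequality (so that $\esp_\t|\hat N_\g-N_\g(\t)|^{2/\g}\le\big(\esp_\t(\hat N_\g-N_\g(\t))^2\big)^{1/\g}$, valid since $1/\g\le1$), then insert Theorem~\ref{theorem_upperbound1} and use $(A+B)^{1/\g}\le A^{1/\g}+B^{1/\g}$: the first summand becomes exactly $r$, and the second becomes $r^{1/\g}\|\t\|_\g^{(2\g-2)/\g}=r^{1/\g}\big(\|\t\|_\g^2\big)^{(\g-1)/\g}\le r^{1/\g}r^{(\g-1)/\g}=r$. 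In the second case I take expectations in the mean value bound and insert Theorem~\ref{theorem_upperbound1}: the contribution of the term $\varepsilon^2 s^{2/\g}\log^{-1}(s^2/d)\,\|\t\|_\g^{2\g-2}$ is $r$, while that of $\varepsilon^{2\g}s^2\log^{-\g}(s^2/d)$ is at most $\varepsilon^{2\g}s^2\log^{-\g}(s^2/d)\,r^{-(\g-1)}=r$ because $\|\t\|_\g^{2\g-2}=\big(\|\t\|_\g^2\big)^{\g-1}>r^{\g-1}$. In either case $\esp_\t\big(\hat n_\g-\|\t\|_\g\big)^2\le Cr$, uniformly over $B_0(s)$, which is~\eqref{upper_dense}.

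Part~(ii) runs along the same lines with Theorem~\ref{theorem_upperbound_even} in place of Theorem~\ref{theorem_upperbound1} and target rate $r'=\varepsilon^2 d^{1/\g}=(\varepsilon^{2\g}d)^{1/\g}$. Since $\g$ is an even integer, $2\g-2\ge\g$, hence $\|\t\|_{2\g-2}\le\|\t\|_\g$ by monotonicity of $\ell_p$-norms, so Theorem~\ref{theorem_upperbound_even} gives $\esp_\t\big(\tilde N_\g-N_\g(\t)\big)^2\le C\big(\varepsilon^{2\g}d+\varepsilon^2\|\t\|_\g^{2\g-2}\big)$, with $\sum_{i=1}^d\t_i^\g=N_\g(\t)=\|\t\|_\g^\g$. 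Splitting on $\|\t\|_\g^2\le r'$ versus $\|\t\|_\g^2> r'$ and repeating the two computations --- now absorbing residual powers of $d$ via the trivial inequalities $\varepsilon^2\le\varepsilon^2 d^{1/\g}$ and $d^{(\g-1)/\g^2}\le d^{1/\g}$ (recall $d\ge1$) --- yields $\esp_\t\big(\tilde n_\g-\|\t\|_\g\big)^2\le C\varepsilon^2 d^{1/\g}$ for every $\t\in\RR^d$, which is~\eqref{upper_even}.

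No step here is difficult; the only point worth flagging is that the plain Hölder bound $\big(\hat n_\g-\|\t\|_\g\big)^2\le|\hat N_\g-N_\g(\t)|^{2/\g}$ does not suffice once $\|\t\|_\g$ is large (it only produces a term of order $r^{1/\g}\|\t\|_\g^{(2\g-2)/\g}$, which is unbounded in $\|\t\|_\g$), so one genuinely needs the sharper mean value bound carrying $\|\t\|_\g^{2\g-2}$ in the denominator in that regime, and the two regimes must be joined precisely at the level of the target rate so that the sparsity-dependent term $\varepsilon^{2\g}s^2\log^{-\g}(s^2/d)$ (respectively $\varepsilon^{2\g}d$) is exactly cancelled.
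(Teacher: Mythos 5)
Your proposal is correct and follows essentially the same route as the paper: the same two elementary inequalities for $t\mapsto t^{1/\g}$ (the paper states them as $|a-b|\,|b|^{\g-1}\le|a^\g-b^\g|$ and $|a-b|\le|a^\g-b^\g|^{1/\g}$), the same Jensen step in the small-norm regime, and the same case split at $\|\t\|_\g^2\asymp r$ with $r$ equal to the target rate, so that the bounds of Theorems~\ref{theorem_upperbound1} and~\ref{theorem_upperbound_even} plug in directly. The computations in both regimes match the paper's.
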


We will prove below that the second bound of Theorem~\ref{corollary_upperbound1} cannot be improved in a minimax sense, and that the first is optimal up to a possible logarithmic factor. Note that, in the dense zone $s^2\ge4d$ considered in Theorem~\ref{corollary_upperbound1}(i), the right hand side of~\eqref{upper_even}  is of smaller order than the right hand side of~\eqref{upper_dense}. This privileged position of even powers~$\g$  can be explained by the fact that the even power functionals $N_\g(\t)$ admit unbiased estimators converging with much faster rates than the estimators for other values of $\g$, for which the functionals $N_\g(\t)$ are not smooth.

}

\subsection{Sparse zone: $s^2\le4d$}\label{sec:sparse_upper}

If $s$ belongs to the sparse zone $s^2\le4d$ we use the estimator
\begin{equation}\label{definition_estimateursparse}
	\hat{N}_\g^* = \sum_{i=1}^d \big\{|y_i|^\g-\varepsilon^\g\a_\g\big\}\fcar_{y_i^2>2\varepsilon^2\log(1+d/s^2)},
\end{equation}
where
\begin{equation*}\label{definition_alpha}
	\a_\g = \frac{\esp\big(|\xi|^\g\fcar_{\xi^2>2\log(1+d/s^2)}\big)}{\prob\big(\xi^2>2\log(1+d/s^2)\big)} \quad \text{for} \quad \xi\sim\nzeroun.
\end{equation*}
The next theorem establishes an upper bound on the risk of this estimator.
\begin{theorem}\label{theorem_upperbound2}
Let the integers $d$ and $s\in \{1,\dots,d\}$ be such that
	$s^2\le4d$ and $\g>0$. 
	Then for any $\t\in B_0(s)$ the estimator defined in~(\ref{definition_estimateursparse}) satisfies
	{\color{black}\begin{equation*}
	\esp_\t \big[\big(\hat{N}_\g^*-N_\g(\t)\big)^2\big] \le 
	C\left(\varepsilon^{2\g}s^2\log^\g(1+d/s^2) + \varepsilon^2 s^{2/\g} \|\t\|_{\g}^{2\g-2}
	\fcar_{\g>1}\right)  ,
	\end{equation*}}
	where $C>0$ is a constant depending only on $\g$.
\end{theorem}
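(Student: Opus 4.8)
The plan is to decompose the error of $\hat{N}_\g^*$ into a stochastic term and a bias term, treating the two contributions $\varepsilon^{2\g}s^2\log^\g(1+d/s^2)$ and $\varepsilon^2 s^{2/\g}\|\t\|_\g^{2\g-2}\fcar_{\g>1}$ separately. Write $\tau^2 = 2\varepsilon^2\log(1+d/s^2)$ for the threshold level and let $I_0 = \{i : \t_i = 0\}$, $I_1 = \{i : \t_i \ne 0\}$, so $|I_1| \le s$. For each coordinate set $\zeta_i = |y_i|^\g - \varepsilon^\g\a_\g$ and $\eta_i = \zeta_i\fcar_{y_i^2 > \tau^2}$, so that $\hat{N}_\g^* = \sum_i \eta_i$ and $N_\g(\t) = \sum_i |\t_i|^\g$. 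The first step is the standard variance/bias split
\begin{equation*}
\esp_\t\big[(\hat{N}_\g^* - N_\g(\t))^2\big] \le 2\,\var_\t\Big(\sum_i \eta_i\Big) + 2\Big(\sum_i \big(\esp_\t\eta_i - |\t_i|^\g\big)\Big)^2,
\end{equation*}
and since the $\eta_i$ are independent the variance is $\sum_i \var_\t(\eta_i) \le \sum_i \esp_\t(\eta_i^2)$.

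First I would handle the null coordinates $i \in I_0$. Here $y_i = \varepsilon\xi_i$, the choice of $\a_\g$ makes $\esp_\t\eta_i = 0$ exactly (by the very definition of $\a_\g$ as a conditional mean), so null coordinates contribute zero bias. For the variance, $\esp_\t(\eta_i^2) = \varepsilon^{2\g}\,\esp\big[(|\xi|^\g - \a_\g)^2\fcar_{\xi^2 > 2\log(1+d/s^2)}\big]$; a Gaussian tail estimate (Mills-ratio type bounds, choosing the moment of $|\xi|^\g$ against the tail probability $\prob(\xi^2 > 2\log(1+d/s^2)) \asymp \frac{s^2}{d}\,(\log(1+d/s^2))^{-1/2}$ up to constants) gives $\esp_\t(\eta_i^2) \lesssim \varepsilon^{2\g}\frac{s^2}{d}\log^{\g-1/2}(1+d/s^2)$ or so; summing over the at most $d$ indices in $I_0$ yields a contribution of order $\varepsilon^{2\g}s^2\log^{\g}(1+d/s^2)$ (absorbing the $\log^{-1/2}$ against a $\log^{1/2}$ slack, or more carefully matching the exact power — this is a routine but slightly delicate Gaussian integral). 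This produces the first term in the bound.

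Next I would handle the signal coordinates $i \in I_1$, splitting according to the size of $|\t_i|$ relative to the threshold $\tau = \varepsilon\sqrt{2\log(1+d/s^2)}$. When $|\t_i|$ is large compared to $\tau$, the indicator is $1$ with high probability, $\eta_i \approx |y_i|^\g$, and a Taylor expansion of $x \mapsto |x|^\g$ around $\t_i$ gives $\esp_\t|y_i|^\g - |\t_i|^\g \lesssim \varepsilon^2|\t_i|^{\g-2}$ (for $\g > 1$; for $\g \le 1$ the function $|x|^\g$ is concave away from $0$ and one gets a cruder bound, but still controlled) plus an exponentially small correction from the rare event $\{y_i^2 \le \tau^2\}$; squaring and summing the bias over $i \in I_1$ and invoking $\sum_{i\in I_1}|\t_i|^{\g-2} \le s^{1-2/\g}\|\t\|_\g^{\g-2}$ by Hölder (when $\g>1$) recovers, after multiplication by $\varepsilon^2$, the term $\varepsilon^2 s^{2/\g}\|\t\|_\g^{2\g-2}$; the per-coordinate variance there is $O(\varepsilon^2|\t_i|^{2\g-2})$, which sums to the same order. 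When $|\t_i|$ is small, $\t_i$ behaves essentially like a null coordinate up to a bounded multiplicative distortion of the Gaussian density, so the contribution is absorbed into the first term via the $|I_1|\le s \le 2\sqrt d$ bound. Finally one checks that in the case $\g \le 1$ the indicator term $\fcar_{\g>1}$ drops out and the signal coordinates contribute only to the first term (since $\sum_{i\in I_1}|\t_i|^\g$ effects are dominated by the threshold-crossing analysis for small $\t_i$ and are exactly unbiased-plus-Taylor for large $\t_i$ with the Taylor remainder now harmless).

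The main obstacle I anticipate is the careful bookkeeping of the Gaussian tail integrals defining $\esp_\t(\eta_i^2)$ and $\esp_\t\eta_i$ for signal coordinates with $|\t_i|$ of order $\tau$ — precisely the borderline regime where neither "the indicator is essentially $1$" nor "the coordinate is essentially null" holds cleanly. One must show that the density ratio $\exp(\t_i y/\varepsilon^2 - \t_i^2/2\varepsilon^2)$ over the relevant region $\{y^2 > \tau^2\}$ stays bounded (or grows at most polynomially in a way that is killed by the tail probability), and that the resulting bias from such coordinates is at most of order $\varepsilon^\g s \log^{\g/2}(1+d/s^2)$ in absolute value, so that its square $\varepsilon^{2\g}s^2\log^\g(1+d/s^2)$ matches the first term. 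The block-free structure of the sparse-zone estimator makes the combinatorics easier than in the dense zone, but the exact power of the logarithm requires matching the moment $\esp(|\xi|^{2\g}\mid |\xi| > \sqrt{2\log(1+d/s^2)})$ with the threshold level, which I would extract from standard sub-Gaussian tail lemmas rather than compute by hand.
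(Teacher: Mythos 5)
Your overall plan (split on/off support, exact centering of the null coordinates via $\a_\g$, Gaussian tail bounds for the off-support variance, Taylor expansion plus H\"older for the on-support terms) is in the right spirit, but two steps do not go through as written. First, the H\"older inequality you invoke, $\sum_{i\in I_1}|\t_i|^{\g-2}\le s^{1-2/\g}\|\t\|_\g^{\g-2}$, is false: taking all $s$ nonzero coordinates equal to $t$ gives $st^{\g-2}$ on the left and $s^{2-4/\g}t^{\g-2}$ on the right, which is strictly smaller whenever $\g<4$; worse, for $1<\g<2$ the exponent $\g-2$ is negative and no bound of $\sum_i|\t_i|^{\g-2}$ by a multiple of $\|\t\|_\g^{\g-2}$ can hold, since the left side blows up as one coordinate tends to $0$. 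The paper's route is to exploit that the Taylor bound $|\esp_\t|y_i|^\g-|\t_i|^\g|\le C\varepsilon^2|\t_i|^{\g-2}$ is only used where $|\t_i|>\varepsilon$, so that $\varepsilon^2|\t_i|^{\g-2}\le\varepsilon|\t_i|^{\g-1}$, and then to apply H\"older with exponents $(\g/(\g-1),\g)$ to obtain $\big(\sum_{i\in S}|\t_i|^{\g-1}\big)^2\le s^{2/\g}\|\t\|_\g^{2\g-2}$, valid for all $\g>1$; this is what produces the second term of the bound. The variance contribution $\varepsilon^2\sum_i|\t_i|^{2\g-2}$ is then controlled by the companion inequality $\sum_i|\t_i|^{2\g-2}\le s^{1/\g}\|\t\|_\g^{2\g-2}$.

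Second, the borderline regime $|\t_i|\asymp\tau$ that you identify as the main obstacle, and propose to attack with density-ratio estimates, is left unresolved in your write-up — and it is in fact unnecessary. The paper's decomposition sidesteps it entirely:
$$
\hat N_\g^*-N_\g(\t)=\sum_{i\in S}\big\{|y_i|^\g-\varepsilon^\g\a_\g-|\t_i|^\g\big\}-\sum_{i\in S}\big\{|y_i|^\g-\varepsilon^\g\a_\g\big\}\fcar_{y_i^2\le\tau^2}+\sum_{i\notin S}\big\{|y_i|^\g-\varepsilon^\g\a_\g\big\}\fcar_{y_i^2>\tau^2}.
$$
The middle sum is bounded \emph{deterministically}: whenever its indicator is nonzero one has $|y_i|^\g\le\tau^\g=2^{\g/2}\varepsilon^\g\log^{\g/2}(1+d/s^2)$, and $\a_\g\le C\log^{\g/2}(1+d/s^2)$, so the sum is at most $Cs\varepsilon^\g\log^{\g/2}(1+d/s^2)$ in absolute value and its square is exactly the first term of the theorem. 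No case analysis on the size of $|\t_i|$ and no control of the likelihood ratio on $\{y^2>\tau^2\}$ is needed; the first sum is handled by the unthresholded Taylor/H\"older analysis above and the third by the exact centering. With these two repairs your argument becomes essentially the paper's proof.
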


	Note that the  estimator $\hat{N}_\g^*$ can be viewed as an example of applying the following routine developed in~\cite{CCT2017}. We start from the direct estimator $ \sum_{i=1}^d |y_i|^\g$ and  we threshold every term in this sum.  This estimator being biased, we center every term by its mean under the assumption that there is no signal.  
	Finally, we choose the value of the threshold that makes the best compromise between the first and second type errors in the support estimation problem. As opposed to the dense zone, we do not invoke the polynomial approximation.
	  In fact, one can notice that the polynomial approximation is only useful in a neighborhood of $0$ but in the sparse zone we renounce estimating small instances of $\t_i$.

{\color{black}Finally, we derive a consequence of Theorem \ref{theorem_upperbound2} for estimation of the functional $\|\t\|_{\g}$.

\begin{theorem}\label{corollary_upperbound2}
	Let the integers $d$ and $s\in \{1,\dots,d\}$ be such that $s^2\le4d$ and $\g>1$. 
	Set  
	$\hat{n}_\g^* = |\hat{N}_\g^*|^{1/\g}$, where $\hat{N}_\g^*$ is defined in~(\ref{definition_estimateursparse}). Then
	\begin{equation*}
		\sup_{\t\in B_0(s)} \esp_\t \big[\big(\hat{n}_\g^*-\|\t\|_{\g} \big)^2\big] \le C \varepsilon^{2}s^{2/\g}\log(1+d/s^2),
	\end{equation*}
	where $C>0$ is a constant depending only on $\g$.
\end{theorem}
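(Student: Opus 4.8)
The plan is to transfer the risk bound of Theorem~\ref{theorem_upperbound2} through the map $t\mapsto t^{1/\g}$. Throughout, write $V=\|\t\|_\g$, $L=\log(1+d/s^2)$ and $R^2=\esp_\t\big[(\hat N_\g^*-N_\g(\t))^2\big]$; note that $L\ge\log(5/4)>0$ because $s^2\le 4d$, that $N_\g(\t)=V^\g$, and that, since $\g>1$, Theorem~\ref{theorem_upperbound2} reads $R^2\le C\big(\varepsilon^{2\g}s^2L^\g+\varepsilon^2 s^{2/\g}V^{2\g-2}\big)$. The key is to control the error amplification of $t\mapsto t^{1/\g}$ in two complementary regimes of $V$.

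First I would record two elementary deterministic inequalities, valid for every $a\in\RR$, every $V\ge0$, and $\g\ge1$:
\[
\big|\,|a|^{1/\g}-V\,\big|\le|a-V^\g|^{1/\g},
\]
and, when $V>0$,
\[
\big(|a|^{1/\g}-V\big)^2\le 2^{2-2/\g}\,V^{2-2\g}\,(a-V^\g)^2 .
\]
The first follows from $|x^{1/\g}-y^{1/\g}|\le|x-y|^{1/\g}$ for $x,y\ge0$ (a consequence of the superadditivity of $t\mapsto t^\g$ on $[0,\infty)$) together with $\big|\,|a|-V^\g\,\big|\le|a-V^\g|$. For the second, in the case $|a-V^\g|\le V^\g/2$ one has $|a|=a\in[V^\g/2,\infty)$ and invokes the mean value theorem for $t\mapsto t^{1/\g}$, whose derivative on $[V^\g/2,\infty)$ is at most $\tfrac1\g(V^\g/2)^{1/\g-1}=\tfrac{2^{1-1/\g}}{\g}V^{1-\g}$; in the complementary case $|a-V^\g|>V^\g/2$ one multiplies the squared first inequality by $\big(|a-V^\g|/(V^\g/2)\big)^{2-2/\g}\ge1$.

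Applying these with $a=\hat N_\g^*$, combining the first with Jensen's inequality (concavity of $t\mapsto t^{1/\g}$ as $1/\g\le1$), and taking expectations, I get
\[
\esp_\t\big[(\hat n_\g^*-V)^2\big]\le \esp_\t\big[|\hat N_\g^*-N_\g(\t)|^{2/\g}\big]\le R^{2/\g},
\]
\[
\esp_\t\big[(\hat n_\g^*-V)^2\big]\le 2^{2-2/\g}V^{2-2\g}R^2\quad (V>0).
\]
It then remains, for each $\t$, to use whichever bound is smaller. Put $V_0=\varepsilon s^{1/\g}L^{1/2}$, so $V_0^{2\g}=\varepsilon^{2\g}s^2L^\g$ and $\varepsilon^2s^{2/\g}V_0^{2\g-2}=V_0^{2\g}/L$. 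If $V\le V_0$, then $2\g-2\ge0$ gives $R^2\le C\big(V_0^{2\g}+\varepsilon^2s^{2/\g}V_0^{2\g-2}\big)=C(1+L^{-1})V_0^{2\g}$, hence $R^{2/\g}\le \big(C(1+L^{-1})\big)^{1/\g}V_0^2$. If $V>V_0$, then $2-2\g\le0$ gives $V^{2-2\g}R^2\le C\big(\varepsilon^{2\g}s^2L^\g V_0^{2-2\g}+\varepsilon^2s^{2/\g}\big)=C\big(V_0^2+\varepsilon^2s^{2/\g}\big)$. In both cases, using $V_0^2=\varepsilon^2 s^{2/\g}L$ and $\varepsilon^2 s^{2/\g}\le \varepsilon^2 s^{2/\g}L/\log(5/4)$, the right-hand side is at most $C'\varepsilon^2 s^{2/\g}L$ with $C'$ depending only on $\g$; since everything is uniform in $\t\in B_0(s)$, taking the supremum finishes the proof.

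The one genuinely delicate point — the step I would watch most carefully — is the large-signal regime $V>V_0$: there the term $\varepsilon^2 s^{2/\g}\|\t\|_\g^{2\g-2}$ in Theorem~\ref{theorem_upperbound2} is unbounded, so the naive route (first inequality plus Jensen) fails, and one must exploit that $t\mapsto t^{1/\g}$ is only $O(V^{1-\g})$-Lipschitz near $V^\g$. The second deterministic inequality is built precisely so that the factor $V^{2-2\g}$ cancels the growth of the variance term, and it is essential that it holds for an arbitrary realization of $\hat N_\g^*$, with no bad event to discard; everything else is routine bookkeeping with the constant supplied by Theorem~\ref{theorem_upperbound2}.
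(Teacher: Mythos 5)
Your proposal is correct and follows essentially the same route as the paper: the same two-case split at the threshold $\|\t\|_\g \lessgtr \varepsilon s^{1/\g}\log^{1/2}(1+d/s^2)$, with Jensen's inequality plus $|a-b|\le|a^\g-b^\g|^{1/\g}$ in the small-norm regime and the local Lipschitz bound (the paper's $|a-b|\,|b|^{\g-1}\le|a^\g-b^\g|$, your version carrying an extra harmless factor $2^{2-2/\g}$) in the large-norm regime. The bookkeeping with the bound of Theorem~\ref{theorem_upperbound2} is carried out correctly in both cases.
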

}

\section{Lower bounds}\label{sec:lower}
  
 We denote by $\mathcal L$ the set of all monotone non-decreasing functions $\ell:[0, \infty)\to [0, \infty)$ such that
$\ell(0)=0$ and $\ell\not\equiv 0$.
  
\begin{theorem} \label{theo_inf_sparse} Assume that $\g>0$. Let $s,d$ be integers such that $s\in \{1,\dots,d\}$ and 
let $\ell(\cdot)$ be any loss function in the class $\mathcal L$. There exist  positive constants $c_1$ and  $c_2$  depending only on $\g$ and $\ell(\cdot)$ such that, 
{\color{black}
for $\phi=\varepsilon^{\g} s\log^{\frac{\g}{2}}(1+ {d}/{s^2})$, 
$$\inf_{\hat{T}} \sup_{\t \in B_0(s)\cap \{\|\t\|_\g\le \phi^{1/\g}\}} \esp_\t\, \ell\Big(  c_1\phi ^{-1}|\hat{T}-N_\g(\t)|\Big) \ge c_2,$$
where $\inf_{\hat{T}}$ denotes the infimum over all estimators. 	
}
\end{theorem}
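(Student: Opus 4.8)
The plan is to establish the lower bound via the standard two-point (or more precisely, two-prior) reduction to hypothesis testing, exploiting the fact that the quantity $\phi=\varepsilon^\g s\log^{\g/2}(1+d/s^2)$ is exactly the magnitude of $N_\g$ that we should be unable to distinguish from zero. Concretely, I would construct two priors $\mu_0$ and $\mu_1$ on $B_0(s)$ such that: (i) under $\mu_0$, the functional $N_\g(\t)$ is concentrated near some value $a_0$; (ii) under $\mu_1$, it is concentrated near $a_1$ with $|a_1-a_0|\gtrsim \phi$; and (iii) the mixture distributions $\prob_{\mu_0}=\int \prob_\t\,\mu_0(d\t)$ and $\prob_{\mu_1}=\int \prob_\t\,\mu_1(d\t)$ of the observations are statistically close, say $\chi^2(\prob_{\mu_1}\,\|\,\prob_{\mu_0})$ or the total variation distance bounded away from $1$. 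Then a standard argument (reduction of estimation under a general loss $\ell\in\mathcal L$ to testing, using that $\ell$ is nondecreasing with $\ell(0)=0$, $\ell\not\equiv0$) converts this into the claimed lower bound with appropriate $c_1,c_2$.

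The concrete construction I expect to use: take $\t_i$ i.i.d. (up to the sparsity constraint) equal to $0$ with probability $1-\tfrac{s}{d}$ and, with probability $\tfrac{s}{d}$, drawn from a symmetric distribution $\nu$ supported near the scale $v:=\varepsilon\sqrt{\log(1+d/s^2)}$ — this is the noise level at which a spike of size $v$ is barely detectable against $d$ coordinates of noise when only $s$ spikes are present. For the two priors one takes $\nu_0$ and $\nu_1$ to be two such distributions on $[-v,v]$ chosen so that their first $m$ moments agree (moment matching), where $m\asymp \log(1+d/s^2)$ is tuned to the detection boundary; the classical polynomial-approximation duality (the same circle of ideas as Lemma~\ref{approx}, used here on the \emph{dual} side) guarantees that one can simultaneously make the moments match up to order $m$ and keep $\big|\int|x|^\g\nu_1(dx)-\int|x|^\g\nu_0(dx)\big|$ of order $v^\g\cdot(\text{best degree-}m\text{ approximation error of }|x|^\g)\asymp v^\g m^{-\g}$, so that the induced gap in $N_\g$ is of order $s\cdot v^\g m^{-\g}\asymp \varepsilon^\g s\log^{\g/2}(1+d/s^2)=\phi$. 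The binomial/sparsity bookkeeping (restricting to $\|\t\|_0\le s$ costs only a constant, by a Poissonization or Bernstein argument since the expected support size is $s$) and the concentration of $N_\g$ around its mean under each prior (variance is a sum of $d$ independent bounded terms, hence $O(s v^{2\g})=o(\phi^2)$ after the tuning) are routine but must be carried through; one also checks $\|\t\|_\g\le\phi^{1/\g}$ holds with high probability under both priors so the restricted supremum is legitimate.

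The key analytic input is the $\chi^2$-divergence bound between the two mixtures. Writing the single-coordinate mixture density as $(1-\tfrac sd)\varphi_\varepsilon + \tfrac sd (\nu_j*\varphi_\varepsilon)$ with $\varphi_\varepsilon$ the $N(0,\varepsilon^2)$ density, one expands $\chi^2$ and uses the moment-matching of $\nu_0,\nu_1$ to kill all terms up to order $m$ in the Hermite/Gaussian-polynomial expansion of $(\nu_j*\varphi_\varepsilon)/\varphi_\varepsilon-1$; the surviving tail is controlled by the rapid decay of Hermite moments together with the choice $v=\varepsilon\sqrt{\log(1+d/s^2)}$ and $m\asymp\log(1+d/s^2)$, giving a per-coordinate $\chi^2$ contribution of order $(s/d)^2\cdot(d/s^2)^{?}$ that, multiplied by $d$ coordinates, stays bounded — this is exactly the computation that pins down the detection threshold. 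I expect this $\chi^2$ estimate, i.e. verifying that the tuned parameters make $d\cdot\chi^2_{\text{coord}}=O(1)$, to be the main obstacle: it requires the sharp form of the moment-matching construction (one cannot use an arbitrary pair of priors, only ones arising from the extremal signed measure in the Chebyshev/Markov duality for $|x|^\g$ on $[-1,1]$) and a careful accounting of how the Gaussian convolution damps high-order moments. Everything else — the reduction from loss $\ell$ to a test, the sparsity truncation, and the concentration of the functional — follows standard templates (e.g. as in \cite{CaiLow2011,CCT2017}).
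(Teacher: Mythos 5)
Your overall template (two sparse priors, reduction of the loss $\ell\in\mathcal L$ to a test via $\ell(t)\ge\ell(a)$ for $t>a$, $\chi^2$ control of the mixtures, and truncation to $B_0(s)$) is the right one, and it is what the paper does by deferring to the proof of the lower bound of Theorem 1 in \cite{CCT2017}. However, your calibration of the two priors contains a genuine error, caused by importing the moment-matching construction --- which belongs to the dense zone --- into this sparse-zone bound. With spikes supported on $[-v,v]$ for $v=\varepsilon\sqrt{\log(1+d/s^2)}$ (the largest scale compatible with the constraint $\|\t\|_\g\le\phi^{1/\g}$) and moments matched up to order $m\asymp\log(1+d/s^2)$, the gap in the functional is
$$ s\,v^\g\,\delta_m(|x|^\g) \asymp s\,\varepsilon^\g\log^{\g/2}(1+d/s^2)\cdot\log^{-\g}(1+d/s^2) = \varepsilon^\g s\,\log^{-\g/2}(1+d/s^2), $$
not $\varepsilon^\g s\log^{\g/2}(1+d/s^2)=\phi$ as you assert: you lose a factor $\log^{\g}(1+d/s^2)$, which is as large as $\log^\g d$ when $s$ is of order one. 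This cannot be repaired by enlarging $v$: to absorb the factor $m^{\g}$ you would need $v\gtrsim\varepsilon\, m\,\log^{1/2}(1+d/s^2)$, which pushes the support of the priors outside $\{\|\t\|_\g\le\phi^{1/\g}\}$ and also ruins the $\chi^2$ bound, since the surviving tail $\sum_{k>m}(v/\varepsilon)^{2k}/k!$ is only small when $v^2/\varepsilon^2\lesssim m$.

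The fix is that here no moment matching is needed at all: the binding constraint in this regime is the needle-in-a-haystack detection problem, not the nonsmoothness of $|x|^\g$ at the origin. Take $\mu_0=\delta_0$ and let $\mu_1$ be the law of $\t$ with independent coordinates $\t_j=\epsilon_j\rho$, where $\epsilon_j\sim\mathrm{Bernoulli}(s/(2d))$ and $\rho=\varepsilon\sqrt{\log(1+d/s^2)}$. The per-coordinate $\chi^2$ equals $(s/2d)^2(e^{\rho^2/\varepsilon^2}-1)=1/(4d)$, so $\chi^2(\PP_1,\PP_0)\le e^{1/4}-1$, while $N_\g(\t)$ is $0$ under $\mu_0$ and exceeds a constant times $s\rho^\g\asymp\phi$ with probability bounded away from zero under $\mu_1$ (for large $s$ by Chebyshev, for small $s$ directly from the binomial law, both handled by the fuzzy-hypotheses version of the testing reduction); the support of $\mu_1$ lies in $B_0(s)\cap\{\|\t\|_\g\le\phi^{1/\g}\}$ after the usual binomial truncation. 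This is exactly the construction of \cite[Theorem 1]{CCT2017} with $\sum_i\t_i$ replaced by $\sum_i|\t_i|^\g$, which is all the paper invokes. Your moment-matching machinery, with precisely the calibration you describe, is the right tool for Theorem~\ref{theo_inf} (the dense zone), where the target rate indeed carries the factor $\log^{-\g/2}(s^2/d)$ that your construction actually produces.
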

The proof is omitted since it follows the lines of the proof of the lower bound in~\cite[Theorem 1]{CCT2017} with the only difference that $L(\t)=\sum_{i=1}^d \t_i$ should be replaced by $\sum_{i=1}^d \t_i^\g$. 
{\color{black}The fact that the result is valid not only over $B_0(s)$ but also over the intersection of $B_0(s)$ with $B_\g:=\{\|\t\|_\g\le \phi^{1/\g}\}$ is granted since  the support of the prior measure used in the proof of the lower bound in~\cite[Theorem 1]{CCT2017}  is included in $B_\g$ for any $\g>0$.

As a corollary of Theorem~\ref{theo_inf_sparse} we obtain the following lower bound.

\begin{theorem}\label{theorem_lowerbound_norm1}
	Assume that $\g>1$. Let $s,d$ be integers such that $s\in \{1,\dots,d\}$ and 
let $\ell(\cdot)$ be any loss function in the class $\mathcal L$. There exist  positive constants 
$c_1$ and  $c_2$  depending only on $\g$ and $\ell(\cdot)$ such that
$$\inf_{\hat{T}} \sup_{\t \in B_0(s)} \esp_\t\, \ell\Big(  c_1( \varepsilon s^{1/\g} \log^{1/2}(1+ {d}/{s^2}))^{-1}|\hat{T}-\|\t\|_\g|\Big) \ge c_2,$$
where $\inf_{\hat{T}}$ denotes the infimum over all estimators. 
\end{theorem}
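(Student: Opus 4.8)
The plan is to deduce Theorem~\ref{theorem_lowerbound_norm1} from Theorem~\ref{theo_inf_sparse} by a standard reduction: a good estimator of $\|\t\|_\g$ would yield a good estimator of $N_\g(\t)=\|\t\|_\g^\g$ on the relevant parameter set, contradicting the lower bound already established for $N_\g$. Concretely, I would work on the set $B_0(s)\cap\{\|\t\|_\g\le \phi^{1/\g}\}$ with $\phi=\varepsilon^\g s\log^{\g/2}(1+d/s^2)$, exactly the set appearing in Theorem~\ref{theo_inf_sparse}. Note that here $\phi^{1/\g}=\varepsilon s^{1/\g}\log^{1/2}(1+d/s^2)$ is precisely the normalization claimed in Theorem~\ref{theorem_lowerbound_norm1}, so the two normalizations are linked by $\phi = (\phi^{1/\g})^\g$; write $\psi:=\phi^{1/\g}$ for brevity.

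First I would fix the elementary inequality: for $a,b\ge 0$ one has $|a^\g-b^\g|\le \g\,(a\vee b)^{\g-1}|a-b|$ (mean value theorem applied to $t\mapsto t^\g$, using $\g>1$). Apply this with $a=\hat n$ (a truncated version of any candidate estimator of $\|\t\|_\g$, truncated to $[0,\psi]$, which only decreases the error on the set where $\|\t\|_\g\le\psi$) and $b=\|\t\|_\g$; since both lie in $[0,\psi]$, we get
\begin{equation*}
 |\hat n^{\,\g}-\|\t\|_\g^{\g}|\le \g\,\psi^{\g-1}\,|\hat n-\|\t\|_\g|.
\end{equation*}
Hence $\hat T:=\hat n^{\,\g}$ is an estimator of $N_\g(\t)$ whose error is controlled: $\phi^{-1}|\hat T - N_\g(\t)| \le \g\,\psi^{\g-1}\phi^{-1}|\hat n-\|\t\|_\g| = \g\,\psi^{-1}|\hat n - \|\t\|_\g|$, where I used $\psi^{\g-1}/\phi = \psi^{\g-1}/\psi^\g = \psi^{-1}$.

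Now I would invoke Theorem~\ref{theo_inf_sparse} with the loss function $\tilde\ell(u):=\ell(u/\g)$, which again belongs to $\mathcal L$ (monotone non-decreasing, vanishing at $0$, not identically zero) and depends only on $\g$ and $\ell$. It gives a constant $c_1$ (call it $c_1'$) such that
\begin{equation*}
 \inf_{\hat T}\sup_{\t\in B_0(s)\cap\{\|\t\|_\g\le\psi\}}\esp_\t\,\tilde\ell\big(c_1'\phi^{-1}|\hat T-N_\g(\t)|\big)\ge c_2.
\end{equation*}
Combining with the displayed comparison, for the specific $\hat T=\hat n^\g$ we have $\tilde\ell\big(c_1'\phi^{-1}|\hat T-N_\g(\t)|\big)\le \tilde\ell\big(c_1'\g\psi^{-1}|\hat n-\|\t\|_\g|\big)=\ell\big(c_1'\psi^{-1}|\hat n-\|\t\|_\g|\big)$, using monotonicity of $\tilde\ell$ and its definition. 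Therefore, with $c_1:=c_1'$,
\begin{equation*}
 \sup_{\t\in B_0(s)\cap\{\|\t\|_\g\le\psi\}}\esp_\t\,\ell\big(c_1\psi^{-1}|\hat n-\|\t\|_\g|\big)\ge c_2
\end{equation*}
for every truncated estimator $\hat n$, and hence (truncation only helps) for every estimator; a fortiori the supremum over the larger set $B_0(s)$ is at least $c_2$. Since $\psi=\varepsilon s^{1/\g}\log^{1/2}(1+d/s^2)$, this is exactly the claim.

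The only genuine point to be careful about — and the main (minor) obstacle — is the truncation step: given an arbitrary estimator $\hat T$ of $\|\t\|_\g$, one must first replace it by $\hat n:=\max(0,\min(\hat T,\psi))$ and check that on the set $\{\|\t\|_\g\le\psi\}$ the error does not increase, $|\hat n-\|\t\|_\g|\le|\hat T-\|\t\|_\g|$, which is immediate since projection onto the interval $[0,\psi]$ containing $\|\t\|_\g$ is a contraction. Everything else is the mean value inequality and bookkeeping of constants; no probabilistic content beyond what Theorem~\ref{theo_inf_sparse} already supplies.
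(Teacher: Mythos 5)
Your proposal is correct and follows essentially the same route as the paper: both deduce the result from Theorem~\ref{theo_inf_sparse} by converting an estimator of $\|\t\|_\g$ into one of $N_\g(\t)$ and comparing the two errors via the inequality $|a^\g-b^\g|\le \g(a\vee b)^{\g-1}|a-b|$ on the set $\{\|\t\|_\g\le\phi^{1/\g}\}$, which is exactly the content of the paper's Lemma~\ref{lemma_reduction}. The only (immaterial) differences are that you truncate the estimator to $[0,\phi^{1/\g}]$ where the paper instead splits on the event $\{\hat T>2\phi^{1/\g}\}$ on which the error is automatically large, and that you absorb the factor $\g$ into a rescaled loss rather than first reducing to indicator losses.
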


Although Theorems~\ref{theo_inf_sparse} and~\ref{theorem_lowerbound_norm1} are valid with no restriction on $s\in \{1,\dots,d\}$, they yield suboptimal bounds in the dense zone. We now turn to the minimax lower bounds with better rates in the dense zone. We state them in the next three theorems of this section.

}

\begin{theorem}
\label{theo_inf} 
Assume that $0<\g\le 1$. Let $s,d$ be integers such that $s\in \{1,\dots,d\}$ and  let $\ell(\cdot)$ be any loss function in the class $\mathcal L$.
{\color{black}
There exist  positive constants $c_1$, $c_2$ and  $c_3$  depending only on $\g$  and $\ell(\cdot)$ and a constant $\bar C\ge 4$ depending only on $\g$ such that, if  $s^2\ge \bar C d$ and $\phi=c_3\varepsilon^{\g} s\log^{-\frac{\g}{2}}(s^2/d)$, then 
$$\inf_{\hat{T}} \sup_{\t \in B_0(s)} \esp_\t\, \ell\bigg(  c_1 \phi^{-1}|\hat{T}-N_\g(\t)|\bigg) \ge c_2.$$
where  $\inf_{\hat{T}}$ denotes the infimum over all estimators. 
}
\end{theorem}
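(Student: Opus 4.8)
The plan is to construct a family of priors on $B_0(s)$ supported on vectors whose nonzero coordinates take only small values (of order $\varepsilon\sqrt{\log(s^2/d)}$), and to apply a standard Bayesian/fuzzy-hypothesis two-point (or rather two-prior) argument of Le Cam–Ibragimov–Khasminskii type. Concretely, I would fix a "bump height" $h \asymp \varepsilon\sqrt{\log(s^2/d)}$ and a sparsity budget of order $s$ active coordinates; one prior puts the active coordinates at $\pm h$ according to independent signs, the other uses a cleverly chosen pair of (symmetric) distributions $\mu_0,\mu_1$ on $[-h,h]$ that are matched on a large number of moments, say on all moments up to order $2K$ with $K\asymp \log(s^2/d)$, while differing in the expectation of $|x|^\g$ by a quantity of order $h^\g \asymp \varepsilon^\g\log^{\g/2}(s^2/d)$. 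Summing over the $\asymp s$ coordinates, the gap in $N_\g(\t)=\sum_i|\t_i|^\g$ between the two priors is of order $s\,\varepsilon^\g\log^{\g/2}(s^2/d)$; note this matches $\phi$ up to the $\log^{-\g/2}$ discrepancy, which (as in \cite{CaiLow2011}) will be absorbed by the logarithmic cost of the moment-matching construction — I would calibrate so that the actual separation is $\phi=c_3\varepsilon^\g s\log^{-\g/2}(s^2/d)$ after accounting for the $K$-dependent normalization of the extremal measures.

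The key steps, in order, are: (1) invoke the approximation-theoretic / moment-matching lemma (analogous to Lemma~\ref{approx} and the classical Cai–Low construction) guaranteeing the existence of symmetric probability measures $\mu_0,\mu_1$ on $[-1,1]$ with identical moments up to order $2K$ and with $\big|\int|x|^\g d\mu_1 - \int|x|^\g d\mu_0\big|$ bounded below by $c\,K^{-\g}$; rescale to $[-h,h]$. (2) Build the two priors $\pi_0,\pi_1$ on $\RR^d$: independently for each coordinate, with probability $s/d$ draw the coordinate from the rescaled $\mu_j$ and otherwise set it to $0$ — so that a realization lies in $B_0(s)$ with probability close to $1$ by a binomial tail bound (here the condition $s^2\ge \bar C d$, equivalently $d/s \le s/\bar C$, ensures concentration of the number of active coordinates around $s$; vectors violating the sparsity constraint are removed and the priors renormalized, losing only a constant). (3) Show the $\chi^2$- or Hellinger-affinity between the two mixture distributions of the data $\int\varphi_\sigma(y-\t)\,d\pi_j(\t)$ stays bounded away from $0$: because the noise is Gaussian, convolving with $\varphi_\varepsilon$ and matching $2K$ moments of the height distributions makes the single-coordinate marginals agree up to an error controlled by the $(2K{+}1)$-st moment times $h^{2K+1}/\varepsilon^{2K+1}$, and the choice $h\asymp\varepsilon\sqrt{\log(s^2/d)}$ with $K\asymp\log(s^2/d)$ makes the per-coordinate $\chi^2$ contribution $O(d^{-2}\cdot\text{poly})$, so that after tensorizing over $d$ coordinates the total $\chi^2$ distance is $O(1)$ — this is exactly the Cai–Low balance $(h/\varepsilon)^{2K}\cdot(s/d)\cdot$(combinatorial factor)$\,\lesssim 1$. (4) Conclude by the reduction-to-testing bound: for the nondecreasing loss $\ell$, $\esp_{\pi_1}$ and $\esp_{\pi_0}$ of $\ell(c_1\phi^{-1}|\hat T - N_\g|)$ cannot both be small when the functional values are separated by $\phi$ and the distributions are statistically indistinguishable, which yields the claimed lower bound with constants $c_1,c_2$ depending only on $\g$ and $\ell$.

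I expect the main obstacle to be the precise bookkeeping in step~(3): one must track how the moment-matching error, the factor $s/d$ (probability of a coordinate being active), and the combinatorial $\binom{d}{\cdot}$-type factors from expanding the $\chi^2$ between product mixtures combine, and verify that choosing $K = \lfloor c\log(s^2/d)\rfloor$ with $c$ small keeps the affinity bounded below by a positive constant \emph{uniformly} in the regime $s^2\ge\bar Cd$ — in particular near the elbow $s^2\asymp d$, where $\log(s^2/d)$ is small and $K$ may be a small constant, the construction degenerates and one has to check the bound survives (this is precisely why $\bar C$ is required to be a sufficiently large constant rather than just $\bar C=4$). A secondary technical point is handling the truncation of the priors to $B_0(s)$ without destroying the moment-matching property; this is routine but must be done carefully, e.g. by conditioning on the (high-probability) event that the number of active coordinates does not exceed $s$ and absorbing the conditioning into the constants.
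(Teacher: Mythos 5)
Your outline is essentially the paper's own proof: symmetric measures on $[-M,M]$ matched on the first $K$ moments with $|x|^\gamma$-means separated by $2M^\gamma\delta_{K,\gamma}\gtrsim M^\gamma K^{-\gamma}$, a Bernoulli$(s/2d)$ sparsification of each coordinate, a $\chi^2$ tensorization bound for the two Gaussian mixtures, conditioning on $B_0(s)$ via a binomial tail bound, and a fuzzy-hypotheses reduction (Theorem 2.15 of \cite{Tsybakov2009}). Two calibration points are, however, pointed in the wrong direction. First, to keep the $\chi^2$ distance bounded you need $K$ to be a \emph{sufficiently large} constant multiple of $\log(s^2/d)$, not a small one: the per-coordinate discrepancy is $\sum_{k>K}\Lambda^{2k}/k!$ with $\Lambda^2=\log(s^2/d)$, and after multiplying by $(s/2d)^2$ and tensorizing one needs this tail to be $\lesssim d/s^2=e^{-\Lambda^2}$, which forces $K\ge c'\Lambda^2$ with $c'$ large (the paper takes $K\ge\tfrac32 e\,\Lambda^2$, so that $(e\Lambda^2/k)^k\le(2/3)^k$ for $k>K$); with $K$ a small multiple of $\Lambda^2$ the series diverges relative to $d/s^2$ and the affinity argument fails. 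This larger $K$ is harmless for the rate since it only changes the constant in $K^{-\gamma}\asymp\log^{-\gamma}(s^2/d)$.

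Second, the condition $s^2\ge\bar C d$ with $\bar C$ large is not needed for the $\chi^2$ bound (which holds already for $s^2\ge 4d$) but for the concentration of the functional under the priors: one must check that the standard deviation of $N_\gamma(\theta)$ under $\mu_i$, which is at most $\sqrt{d}\,M^\gamma=\sqrt{d}\,\varepsilon^\gamma\Lambda^{\gamma}$, is a small fraction of the separation $m_1-m_0\asymp s\varepsilon^\gamma\Lambda^{-\gamma}$, i.e. $\tfrac{\sqrt d}{s}\log^{\gamma}(s^2/d)$ must be small — this is exactly where $\bar C$ large (depending on $\gamma$) enters, via Chebyshev--Cantelli. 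Your proposal does not mention controlling the fluctuations of $N_\gamma(\theta)$ around its prior means at all; without that step the two priors do not yield well-separated values of the functional with high probability, and the reduction to testing does not go through. Everything else (including the worry about binomial-type factors, which in fact disappear because the $\chi^2$ of product mixtures factorizes exactly) matches the paper.
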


{\color{black}

We are now in a position to derive the result on the minimax rate for  $0<\g\le 1$ announced in~\eqref{optrate0}. It is not hard to see that it follows from  Theorems~\ref{theorem_upperbound1},~\ref{theorem_upperbound2},~\ref{theo_inf_sparse} and~\ref{theo_inf} with $\ell(u)=u^2$.
 
Next, the minimaxity of rate in the first line of \eqref{optrate} is granted by Theorems~\ref{corollary_upperbound2} and~\ref{theorem_lowerbound_norm1} while the second line of \eqref{optrate}  follows from Theorem~\ref{corollary_upperbound1}(ii) and the next lower bound.

\begin{theorem}
\label{theorem_lowerbound_norm2} 
Assume that $\g$ is an even integer. Let $s,d$ be integers such that $s\in \{1,\dots,d\}$ and $s\ge \sqrt{d}$. Let $\ell(\cdot)$ be any loss function in the class $\mathcal L$.
Then there exist  positive constants $c_1$ and  $c_2$  depending only on $\g$  and $\ell(\cdot)$ such that
\begin{equation}\label{low2}
\inf_{\hat{T}} \sup_{\t \in B_0(s)} \esp_\t\, \ell\bigg(  c_1 \big(
\varepsilon d^{\frac{1}{2\g}}
\big)^{-1}|\hat{T}-\|\t\|_\g|\bigg) \ge c_2
\end{equation}
where  $\inf_{\hat{T}}$ denotes the infimum over all estimators. 	
\end{theorem}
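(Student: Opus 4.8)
The plan is to reduce the norm-estimation lower bound to a two-point (or, more precisely, a two-prior) testing argument in the spirit of the classical Le Cam method, exploiting the fact that for even integer $\g$ the functional $\|\t\|_\g = N_\g(\t)^{1/\g}$ is a smooth monotone transform of $N_\g(\t)$, so the obstruction is really about the additive scale $\varepsilon d^{1/(2\g)}$ for $N_\g$ translated through the map $t\mapsto t^{1/\g}$. First I would reduce from a general loss $\ell\in\mathcal L$ to the indicator loss: since $\ell$ is monotone nondecreasing with $\ell(0)=0$ and $\ell\not\equiv0$, there is $u_0>0$ with $\ell(u_0)>0$, and $\esp_\t\ell(c_1\phi^{-1}|\hat T-\|\t\|_\g|)\ge \ell(u_0)\,\prob_\t(|\hat T-\|\t\|_\g|\ge u_0 c_1^{-1}\phi)$ with $\phi=\varepsilon d^{1/(2\g)}$, so it suffices to lower bound the minimax probability of an estimation error of order $\phi$ by a constant.

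Next I would construct the two competing hypotheses. Since $s\ge\sqrt d$, the dense zone permits using a prior supported on vectors with of order $d$ nonzero coordinates (indeed all $d$ coordinates, as $s\ge\sqrt d$ certainly allows $\|\t\|_0\le s$ once $d\le s$; for $\sqrt d\le s< d$ one restricts the construction to an index subset of size $\lfloor s\rfloor$, which only changes constants). Take $\t^{(0)}=0$ against a product prior $\mu$ under which each $\t_i$ is an i.i.d. copy of a small random variable $\tau$ with values of order $\varepsilon$, chosen so that $\esp\,\tau^{\g}=0$ for every $\g\le$ the relevant even degree but $\esp\,\tau^{2\g}$ is of order $\varepsilon^{2\g}$ — this is the standard moment-matching trick that makes unbiased polynomial estimators of $N_\g$ fail: a symmetric two-point distribution on $\{\pm a\varepsilon\}$ already kills all odd moments, but to separate $N_\g=\sum\t_i^\g$ one needs a distribution matching all moments of $0$ up to order $\g-1$ while having nonzero $\g$-th moment, which by a standard argument exists supported on $O(\g)$ points (this is exactly the type of prior underlying Theorem~\ref{theo_inf_sparse}-type bounds, and here we need no sparsity restriction since $s\ge\sqrt d$). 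Under $\mu$ one has $\esp_\mu N_\g(\t) \asymp d\,\varepsilon^\g$ with fluctuations of order $\sqrt d\,\varepsilon^\g$, so $\esp_\mu\|\t\|_\g = (\esp_\mu N_\g)^{1/\g}(1+o(1)) \asymp d^{1/\g}\varepsilon$, and crucially the spread of $\|\t\|_\g$ under $\mu$ around this value is of order $d^{1/\g}\varepsilon \cdot (\sqrt d \varepsilon^\g)/(d\varepsilon^\g) = d^{1/\g - 1/2}\varepsilon$, which is a $d^{-1/2}$-fraction of the mean; meanwhile $\|\t^{(0)}\|_\g=0$. The separation between the null value $0$ and the bulk of the alternative is thus of order $d^{1/\g}\varepsilon$, far larger than the target rate $\varepsilon d^{1/(2\g)}$, so a cruder single-point alternative would suffice for the scale $\varepsilon d^{1/(2\g)}$ only if the $\chi^2$-distance between $\prob_0$ and $\prob_\mu$ stays bounded — and that is the content of the moment-matching: the likelihood ratio $\esp_0\big(\frac{d\prob_\mu}{d\prob_0}\big)^2 = \prod_i \esp_{\tau,\tau'}\exp(\tau\tau'/\varepsilon^2)$ is $(1+O(a^{2\g}/\g!))^d$ which stays bounded provided $a$ is chosen so that $a^{2\g}d = O(1)$, i.e. $a\asymp d^{-1/(2\g)}$; with this choice the per-coordinate signal is $a\varepsilon\asymp \varepsilon d^{-1/(2\g)}$ and $\esp_\mu N_\g \asymp d\cdot (\varepsilon d^{-1/(2\g)})^\g = \varepsilon^\g d^{1/2}$, hence $\esp_\mu\|\t\|_\g \asymp \varepsilon d^{1/(2\g)}$, exactly the target.

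Having fixed the construction, the final steps are routine: (i) verify that the mixture $\prob_\mu$ and $\prob_0$ satisfy $\chi^2(\prob_\mu\,\|\,\prob_0)\le$ a constant (the product-form $\chi^2$ computation above, using that $\esp\exp(\tau\tau'/\varepsilon^2) = 1 + \sum_{k\ge1}\esp\tau^k\esp(\tau')^k/(k!\varepsilon^{2k})$ and that only $k\ge\g$ terms survive by moment-matching, each of size $O((a^2)^k)$), which bounds the total variation $\|\prob_\mu-\prob_0\|_{TV}$ away from $1$; (ii) observe $|\,\|\t^{(0)}\|_\g - \esp_\mu\|\t\|_\g| \ge c\,\varepsilon d^{1/(2\g)}$, and that under $\prob_\mu$ the value of $\|\t\|_\g$ concentrates within, say, $\tfrac14$ of its mean with probability close to one (Chebyshev on $N_\g$, since $\var_\mu N_\g \asymp d\,\varepsilon^{2\g}d^{-1/\g}\cdot\varepsilon^0$... more precisely $\var_\mu N_\g = d\var(\tau^\g) \asymp d(a\varepsilon)^{2\g} = \varepsilon^{2\g}$, relatively small compared to $(\esp_\mu N_\g)^2\asymp\varepsilon^{2\g}d$, so the relative fluctuation is $d^{-1/2}\to0$); (iii) apply the standard Le Cam two-point reduction: any estimator $\hat T$ must err by at least $c\varepsilon d^{1/(2\g)}$ under at least one of the two hypotheses with probability bounded below, because a test based on $\hat T$ distinguishing the two separated values would contradict the bounded total variation. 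I expect the main obstacle to be step (i) together with the existence and moment bounds for $\tau$: one must simultaneously (a) match $\g-1$ moments of zero (easy, by symmetry one kills odd moments; the even moments $2,4,\dots,\g-2$ require a genuinely multi-atom distribution, whose existence with support in $[-1,1]$ and $\g$-th moment bounded below is a small linear-algebra / Chebyshev-system fact), and (b) keep the exponential-moment product bounded, which forces the delicate scaling $a\asymp d^{-1/(2\g)}$ and requires that the tail of $\tau\tau'/\varepsilon^2$ be controlled — here boundedness of $\tau$ makes $\esp\exp(\tau\tau'/\varepsilon^2)$ finite and the series manipulation legitimate. Everything else is bookkeeping with constants depending only on $\g$.
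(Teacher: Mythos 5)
There is a genuine gap, and it is fatal for every even $\g\ge 4$. Your construction requires a non-degenerate real random variable $\tau$ whose moments of orders $1,\dots,\g-1$ all vanish. For $\g\ge 4$ this includes $\esp\tau^2=0$, which forces $\tau=0$ almost surely; the ``small linear-algebra / Chebyshev-system fact'' you invoke is false. The moment-matching constructions in this literature (and the paper's own Proposition~\ref{mesuresf}/Lemma~\ref{mesures}) match moments between \emph{two non-degenerate symmetric priors}, never between a non-degenerate prior and the point mass at $0$. Your scheme does work for $\g=2$ (it is the Rademacher prior $\pm\varepsilon d^{-1/4}$), but the theorem concerns all even $\g$. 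The correct replacement is to trade exact moment cancellation for sparsity: take each coordinate to be $0$ with probability $1-p$ and $\pm c\varepsilon$ with probability $p/2$ each, $p\asymp d^{-1/2}$. Then every term $(\esp\tau^k)^2/(k!\varepsilon^{2k})$ in your per-coordinate $\chi^2$ expansion is $O(p^2)=O(1/d)$ not because moments vanish but because each carries a factor $p$; one still gets $\esp_\mu N_\g\asymp \varepsilon^\g\sqrt d$, hence separation $\asymp\varepsilon d^{1/(2\g)}$ in the norm, together with the Chebyshev concentration you describe. This prior also lands in $B_0(s)$ with high probability for every $s\ge\sqrt d$, which repairs a second defect of your proposal: restricting a dense i.i.d.\ prior to an index set of size $s$ does \emph{not} ``only change constants'' --- redoing your own $\chi^2$ calculation on $s$ coordinates forces $a\asymp s^{-1/(2\g)}$ and yields only $\varepsilon s^{1/(2\g)}$, which is strictly weaker than $\varepsilon d^{1/(2\g)}$ whenever $s<d$.

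For comparison, the paper does not build a new prior at all: it applies Theorem~\ref{theo_inf_sparse} with $s$ replaced by $\lceil\sqrt d\rceil$ (whose underlying prior is exactly the sparse Bernoulli prior above), so that $\phi=\varepsilon^\g s\log^{\g/2}(1+d/s^2)\asymp\varepsilon^\g\sqrt d$, converts the $N_\g$-lower bound into a $\|\cdot\|_\g$-lower bound at rate $\phi^{1/\g}\asymp\varepsilon d^{1/(2\g)}$ via the elementary power-map reduction of Lemma~\ref{lemma_reduction} (valid because the prior is supported in $\{\|\t\|_\g\le\phi^{1/\g}\}$), and then uses nestedness of the classes $B_0(s)$ to cover all $s\ge\sqrt d$. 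Your overall Le Cam architecture --- reduce to indicator loss, bound a $\chi^2$ divergence, concentrate the functional under the alternative --- is the right skeleton, but the specific prior must be the sparse one.
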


}

{\color{black}

In conclusion, we have the following corollary.

\begin{corollary}
\label{corollary_main} 
The minimax risk on $B_0(s)$ with loss function $\ell(u)=u^2$ satisfies \eqref{optrate0} and \eqref{optrate}.
\end{corollary}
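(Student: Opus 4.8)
The plan is to assemble the corollary purely by collecting the matching upper and lower bounds already established, taking $\ell(u)=u^2$ throughout and treating each of the four regimes in \eqref{optrate0}–\eqref{optrate} separately. First, for the case $0<\g\le 1$ in the sparse zone $s\le\sqrt d$ (equivalently $s^2\le 4d$ up to the harmless constant), the upper bound $\mathcal R_{s,d}(\varepsilon,\g)\le C\varepsilon^{2\g}s^2\log^\g(1+d/s^2)$ is the $\|\t\|_\g^{2\g-2}\fcar_{\g>1}=0$ special case of Theorem~\ref{theorem_upperbound2}, while the matching lower bound comes from Theorem~\ref{theo_inf_sparse} with $\phi=\varepsilon^\g s\log^{\g/2}(1+d/s^2)$: restricting the supremum to $B_0(s)\cap\{\|\t\|_\g\le\phi^{1/\g}\}$ only decreases it, so $\mathcal R_{s,d}(\varepsilon,\g)\ge c_2 c_1^{-2}\phi^2=c'\varepsilon^{2\g}s^2\log^\g(1+d/s^2)$. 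In the dense zone $s>\sqrt d$ with $0<\g\le 1$, one uses Theorem~\ref{theorem_upperbound1} (again the indicator vanishes) for the upper bound $C\varepsilon^{2\g}s^2\log^{-\g}(s^2/d)$ and Theorem~\ref{theo_inf} for the matching lower bound; here one must observe that $\log(s^2/d)\asymp\log(1+s^2/d)$ when $s^2\ge\bar Cd$, so the two logarithmic factors agree up to constants depending only on $\g$, and that the requirement $s^2\ge\bar C d$ in Theorem~\ref{theo_inf} is compatible with the stated regime $s>\sqrt d$ after absorbing $\bar C$ into the $\asymp$ constants as discussed in Section~\ref{section_upperbounds}.

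Second, for $\g>1$ in the sparse zone, the norm risk $\mathcal R_{s,d}(\varepsilon,\g)$ (now with $n_\g=\|\t\|_\g$) is bounded above by $C\varepsilon^2 s^{2/\g}\log(1+d/s^2)$ via Theorem~\ref{corollary_upperbound2} and below by the same order via Theorem~\ref{theorem_lowerbound_norm1} with $\ell(u)=u^2$, which gives $\mathcal R_{s,d}(\varepsilon,\g)\ge c_2 c_1^{-2}\varepsilon^2 s^{2/\g}\log(1+d/s^2)$. This settles the first line of \eqref{optrate}. For the second line, $\g\in E$ an even integer in the dense zone $s>\sqrt d$, the upper bound $C\varepsilon^2 d^{1/\g}$ is exactly Theorem~\ref{corollary_upperbound1}(ii), and the matching lower bound $c_2 c_1^{-2}\varepsilon^2 d^{1/\g}$ follows from Theorem~\ref{theorem_lowerbound_norm2}, noting $(\varepsilon d^{1/(2\g)})^2=\varepsilon^2 d^{1/\g}$.

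There is essentially no obstacle here: the corollary is a bookkeeping statement, and the only points requiring a word of care are (i) the equivalence of the regime boundaries $s\le\sqrt d$ versus $s^2\le 4d$ (and $s>\sqrt d$ versus $s^2\ge 4d$), which is legitimate because the target rates in \eqref{optrate0}–\eqref{optrate} change only by a constant over the transition band $s\asymp\sqrt d$, as already noted at the start of Section~\ref{section_upperbounds}; (ii) the equivalence $\log^\g(s^2/d)\asymp\log^\g(1+s^2/d)$ in the dense zone and the compatibility of the constant $\bar C$ from Theorem~\ref{theo_inf}; and (iii) the fact that passing from $N_\g$ to $n_\g=\|\t\|_\g$ by the map $t\mapsto |t|^{1/\g}$ for $\g>1$, and the corresponding risk bounds, is already packaged in Theorems~\ref{corollary_upperbound1} and~\ref{corollary_upperbound2}, so no further argument is needed. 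Combining the four regime-by-regime two-sided bounds yields \eqref{optrate0} and \eqref{optrate}, completing the proof.
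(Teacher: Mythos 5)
Your proposal is correct and is essentially the paper's own argument: the authors likewise obtain \eqref{optrate0} from Theorems~\ref{theorem_upperbound1}, \ref{theorem_upperbound2}, \ref{theo_inf_sparse} and \ref{theo_inf} with $\ell(u)=u^2$, the first line of \eqref{optrate} from Theorems~\ref{corollary_upperbound2} and \ref{theorem_lowerbound_norm1}, and the second line from Theorem~\ref{corollary_upperbound1}(ii) and Theorem~\ref{theorem_lowerbound_norm2}. Your explicit handling of the transition band $s\asymp\sqrt d$ and of $\log(s^2/d)\asymp\log(1+s^2/d)$ only spells out details the paper leaves implicit.
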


Finally, we deduce \eqref{optrate1} from Theorem~\ref{corollary_upperbound1}(i) and the following lower bound.

\begin{theorem}\label{theorem_lowerbound_norm_hard} 
Assume that $\g>1$ is not an even integer. Let $s,d$ be integers such that $s\in \{1,\dots,d\}$ and  let $\ell(\cdot)$ be any loss function in the class $\mathcal L$.
There exist  positive constants $c_1$ and  $c_2$  depending only on $\g$  and $\ell(\cdot)$ and a constant $\bar C\ge 4$ depending only on $\g$ such that, if  $s^2\ge \bar C d$, then 
\begin{equation}\label{low1}
\inf_{\hat{T}} \sup_{\t \in B_0(s)} \esp_\t\, \ell\bigg(  c_1 \bigg(
\frac{\varepsilon s^{1/\g}}{\log^{\g-1/2}(s^2/d)}
\bigg)^{-1}|\hat{T}-\|\t\|_\g|\bigg) \ge c_2
\end{equation}
where  $\inf_{\hat{T}}$ denotes the infimum over all estimators. 	
\end{theorem}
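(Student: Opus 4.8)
The plan is to reduce the problem of estimating $\|\t\|_\g$ to a two-hypothesis (or prior-versus-prior) testing problem via a standard Le Cam / moment-matching argument, exactly in the spirit of the lower bounds in \cite{CaiLow2011} and \cite{CCT2017}, but now exploiting the fact that $\g$ is \emph{not} an even integer so that $|x|^\g$ cannot be matched by a low-degree polynomial. First I would fix a block of $m\asymp d$ coordinates (since $s^2\ge \bar C d$, we may afford a support of size $\asymp \min(s,d)$; the effective number of coordinates carrying signal is tuned to make the $\ell_0$ constraint active through the factor $s^{1/\g}$). On this block I place two product priors $\mu_0,\mu_1$ on the scalar parameter, both supported in an interval $[-\beta,\beta]$ with $\beta\asymp \varepsilon\sqrt{\log(s^2/d)}$, chosen so that (a) $\mu_0$ and $\mu_1$ have the same first $2K$ moments, where $K\asymp \log(s^2/d)$, and (b) the expected values of $|x|^\g$ under $\mu_0$ and $\mu_1$ differ by a quantity $\delta>0$. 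The existence of such a pair with a \emph{large} moment gap is precisely the dual formulation of the best-polynomial-approximation error of $|x|^\g$; by the Chebyshev-type lower bound on $E_{2K}(|x|^\g)$ recalled in Lemma~\ref{approx} (and used for the matching upper bound), one gets $\delta\gtrsim \beta^\g K^{-2\g}\asymp \varepsilon^\g\log^{-\g+1/2}(\cdots)$ per coordinate — note the $\g$ (not $2\g$) in the exponent is what distinguishes the non-even case.

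The second step is the variance/chi-square computation. Under $\mu_0$ (resp.\ $\mu_1$), the observation in a signal coordinate is a Gaussian mixture, i.e.\ the convolution of the prior with $\mathcal N(0,\varepsilon^2)$. Because the two priors share their first $2K$ moments and are supported on $[-\beta,\beta]$ with $\beta/\varepsilon\asymp\sqrt{\log(s^2/d)}$ and $K\asymp\log(s^2/d)$, the $\chi^2$-divergence between the two single-coordinate mixtures is bounded by a term of order $(\beta^2/(2\varepsilon^2 K))^{2K}\cdot(\text{const})$ or a similar super-exponentially small quantity — the standard bound (see \cite{CaiLow2011}, Lemma on moment-matching priors) gives $\chi^2\le (C\beta^2/(\varepsilon^2 K))^{2K+1}$, which for the chosen constants is at most $1/m$ provided the constant $c$ hidden in $K\asymp \log(s^2/d)$ is taken large enough (this is where $\bar C$ enters). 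Tensorizing over the $m$ signal coordinates, $\chi^2(\mu_0^{\otimes}\ast\mathcal N, \mu_1^{\otimes}\ast\mathcal N)$ stays bounded, hence the two resulting mixture distributions on $\RR^d$ are not statistically separable.

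The third step converts the separation in the functional into the claimed rate. The functional gap between the two priors for $N_\g$ is $m\delta\asymp d\cdot \varepsilon^\g\log^{-\g+1/2}(s^2/d)$. Translating to the norm $\|\t\|_\g=N_\g(\t)^{1/\g}$ around the value $N_\g\asymp d\beta^\g\asymp d\,\varepsilon^\g\log^{\g/2}(s^2/d)$, a first-order expansion $x^{1/\g}$ gives a gap in $\|\t\|_\g$ of order $(m\delta)\cdot N_\g^{1/\g-1}\asymp d^{1/\g}\varepsilon\log^{-(\g-1/2)}(s^2/d)\cdot(\text{lower-order})$; adjusting the signal-block size $m$ from $\asymp d$ to the value that makes $\|\t\|_0\le s$ tight replaces $d^{1/\g}$ by $s^{1/\g}$, yielding exactly $\varepsilon s^{1/\g}\log^{-(\g-1/2)}(s^2/d)$. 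A routine reduction from estimation to testing (Markov inequality plus the two-point argument, using $\ell\in\mathcal L$) then gives the assertion with appropriate constants $c_1,c_2$.

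The main obstacle is the quantitative control in Step~1–2: one must produce the moment-matched pair of priors with a genuinely \emph{lower-bounded} functional gap $\delta$ of the right order \emph{and simultaneously} keep the support radius $\beta$ and the moment order $K$ in the precise relation that forces the $\chi^2$-divergence below $1/m$. This is the classical tension in nonsmooth functional lower bounds — the duality between best polynomial approximation and moment-matched priors gives the gap, but squeezing the divergence requires $K$ large relative to $\beta^2/\varepsilon^2$, which in turn caps $\beta$ and hence $\delta$; the bookkeeping of the constants (and the choice of $\bar C$ large enough to make it all consistent) is the delicate part. Everything else — tensorization, the $x^{1/\g}$ linearization, and the testing reduction — is standard and parallels \cite[Theorem~1]{CCT2017} and the dense-zone lower bound of \cite{CaiLow2011}.
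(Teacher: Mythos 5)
Your outline follows the same route as the paper: moment-matched priors dual to the best polynomial approximation of $|x|^\g$, a $\chi^2$ bound showing the two resulting mixtures are statistically indistinguishable, and a linearization of $x\mapsto x^{1/\g}$ converting the gap in $N_\g$ into a gap in $\|\t\|_\g$ (the paper reuses verbatim the priors from the proof of Theorem~\ref{theo_inf} and bounds the new separation $u'=\big((c+2u)^{1/\g}-c^{1/\g}\big)/2$ from below via $|x^\g-y^\g|\le\g\max(x,y)^{\g-1}|x-y|$). Two points in your sketch, however, do not survive scrutiny as written. The first concerns the prior design and the $\chi^2$ step. You place a full product prior on a deterministic block of $m$ coordinates ($m\asymp d$, later adjusted to $\asymp s$) and require the single-coordinate $\chi^2$ to be at most $1/m$. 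With $\beta\asymp\varepsilon\sqrt{\log(s^2/d)}$ and $K=c\log(s^2/d)$, the moment-matching tail $\sum_{k>2K}(\beta^2/\varepsilon^2)^{k}/k!$ is of order $(s^2/d)^{-g(c)}$ for some $g(c)$ depending only on $c$; when $s^2/d$ is a bounded constant (say $s^2=\bar C d$ with $s\to\infty$, a regime the theorem must cover) this quantity is bounded below by a positive constant and can never be $\le 1/m$, no matter how large you take $c$. The cure — and what the paper does — is to \emph{thin} the prior: each of the $d$ coordinates is nonzero with probability $s/(2d)$ and, when nonzero, drawn from $\tilde\mu_i$. The mixture structure multiplies the per-coordinate $\chi^2$ by $(s/(2d))^2$, so after tensorizing over all $d$ coordinates the requirement becomes $(s^2/d)\sum_{k>K}\Lambda^{2k}/k!\lesssim 1$ with $\Lambda^2=\log(s^2/d)$, which $K\asymp\log(s^2/d)$ does achieve (Lemmas~\ref{chi1} and~\ref{chi}). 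One must then also check that the random support has size $\le s$ with high probability and that $N_\g(\t)$ concentrates under each prior (Chebyshev--Cantelli); it is there, not in the $\chi^2$ bound, that the condition ``$s^2\ge\bar Cd$ with $\bar C$ large'' is actually consumed.

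The second point is the exponent bookkeeping, which is internally inconsistent. The duality (Lemma~\ref{mesures} together with Lemma~\ref{approx}) gives a per-coordinate functional gap $\beta^\g\,\delta_{2K,\g}\asymp\beta^\g K^{-\g}\asymp\varepsilon^\g\log^{-\g/2}(s^2/d)$ — not $\beta^\g K^{-2\g}$, and not $\varepsilon^\g\log^{-\g+1/2}$ as you state (these three quantities are pairwise different). Feeding your stated $\delta$ into your own Step~3 linearization yields a norm gap of order $\varepsilon s^{1/\g}\log^{1-3\g/2}(s^2/d)$ rather than the claimed $\varepsilon s^{1/\g}\log^{1/2-\g}(s^2/d)$; the two agree only at $\g=1$. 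With the corrected $\delta\asymp\varepsilon^\g\log^{-\g/2}(s^2/d)$, effective support of size $\asymp s$, and $N_\g\asymp s\varepsilon^\g\log^{\g/2}(s^2/d)$, the linearization does produce $\varepsilon s^{1/\g}\log^{1/2-\g}(s^2/d)$, matching \eqref{low1}. So the plan is salvageable, but both the prior construction and the arithmetic of the exponents need the corrections above before it constitutes a proof.
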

}

\section{Proofs of the upper bounds}

Throughout the proofs, we denote by $C$ positive constants that can depend only on $\g$ and may take different values on different appearances. 

\subsection{Proof of Theorem~\ref{theorem_upperbound1}}\label{sec:proof:upper}

Denote by $S$ the support of $\t$. We start with a bias-variance decomposition
\begin{align*}
	\big(\hat{N}_\g-N_\g(\t)\big)^2 &\le 4\,\Big(\sum_{i\in S} \esp_\t\xi_\g(y_{1,i},y_{2,i})-\sum_{i\in S} |\t_i|^\g\Big)^2 \\
	&+ 4\,\Big(\sum_{i\in S} \xi_\g(y_{1,i},y_{2,i})-\sum_{i\in S} \esp_\t\xi_\g(y_{1,i},y_{2,i})\Big)^2 \\
	&+ 4\,\Big(\sum_{i\not\in S} \esp_\t \xi_\g(y_{1,i},y_{2,i}) \Big)^2 \\
	&+ 4\,\Big(\sum_{i\not\in S} \xi_\g(y_{1,i},y_{2,i})-\sum_{i\not\in S} \esp_\t\xi_\g(y_{1,i},y_{2,i})\Big)^2
\end{align*}
leading to the bound
\begin{align}\label{proof_eq1}
	\esp_\t\big[(\hat{N}_\g-N_\g(\t))^2\big] &\le 4\Big(\sum_{i\in S} B_i \Big)^2 + 4 \sum_{i\in S} V_i \\
	&+ 4d^2 \max_{i\not\in S} B_i^2 + 4d \max_{i\not\in S} V_i, \nonumber
\end{align}
where $B_i=\esp_\t \xi_\g(y_{1,i},y_{2,i})- |\t_i|^\g$ is the bias of $\xi_\g(y_{1,i},y_{2,i})$ as an estimator of $|\t_i|^\g$ and $V_i=\var_\t(\xi_\g(y_{1,i},y_{2,i}))$ is its variance. We now bound separately the four terms in  \eqref{proof_eq1}.  
{\color{black}
We will show that the first two terms are smaller than  
\begin{equation}\label{210}
C\left({\s^{2\g}s^2}{\log^{-\g}(s^2/d)} + \frac{\s^2}{\log(s^2/d)} \Big(\sum_{i\in S}|\t_i |^{\g-1}\Big)^2\fcar_{\g>1}+ \s^2 \sum_{i=1}^d |\t_i |^{2\g-2}\fcar_{\g>1}\right)  
\end{equation}
while the last two terms are smaller than $C\s^{2\g} s^2 \log^{-\g}(s^2/d)$. This proves the theorem since, by H\"older inequality, for any $\t\in B_0(s)$ and $\g>1$ we have
\begin{equation}\label{21}
\Big(\sum_{i\in S}|\t_i |^{\g-1}\Big)^2\le s^{2/\g}\|\t\|_\g^{2\g-2},   
\end{equation}
and 
\begin{equation}\label{22}
\sum_{i=1}^d |\t_i|^{2\g-2}\le s^{1/\g} \Big(\sum_{i=1}^d |\t_i|^{2\g}\Big)^{\frac{\g-1}{\g}}\le s^{1/\g}\|\t\|_\g^{2\g-2}.
\end{equation}
}
$1^{\circ}.$ {\it Bias for $i\not\in S$.} For $i\not\in S$ using Lemma~\ref{lemma_hermite2} we obtain
\begin{align*}
	|B_i| &= \s^\g \esp |\xi|^\g  \prob(|\xi|>t_L) \le C \s^\g e^{-t_L^2/2}, \quad \xi\sim\nzeroun.
\end{align*}
The last exponential is smaller than $1/d$ by the definition of $t_L$, so that
\begin{equation}\label{proof_eq2}	
	d^2 \max_{i\not\in S} B_i^2 \le C \s^{2\g}d  \le C \frac{\s^{2\g} s^2}{\log^{\g}(s^2/d)}. 
\end{equation}

$2^{\circ}.$ {\it Variance for $i\not\in S$.} If $i\not\in S$, then
\begin{equation}\label{proof_eq1a}
	V_i \le \sum_{l=0}^L \esp \hat{P}^2_{\g,K_l,M_l}(\s \xi) \prob(|\xi|>t_{l-1}) + \s^{2\g} \esp |\xi|^{2\g} \prob(|\xi|>t_L), \quad \xi\sim\nzeroun.
\end{equation}
The last term in \eqref{proof_eq1a} is bounded from above as in item $1^{\circ}.$ Next, in view of Lemma~\ref{lemma_CaiLow1},
$$
\esp \hat{P}^2_{\g,K_0,M_0}(\s\xi) \le C \s^{2\g}\frac{6^{2K_0} }{(M_0/\s)^{4-2\g}}\le C \s^{2\g}\log^\g(s^2/d) \Big(\frac{s^2}{d}\Big)^{2c\log 6} \le C \s^{2\g}\log^\g(s^2/d) \sqrt{\frac{s^2}{d}}
$$
if $c$ is chosen such that $2c\log 6\le 1/2$. Here, we use the assumption $s^2\ge 4d$.
 For $l\ge 1$, we use Lemma~\ref{lemma_CaiLow1} to obtain
\begin{align*}
	\esp \hat{P}^2_{\g,K_l,M_l}(\s\xi) \prob(|\xi|>t_{l-1}) &\le C \s^{2\g}\frac{6^{2K_l} e^{-t^2_{l-1}/2}}{(M_l/\s)^{4-2\g}} \le C \s^{2\g}4^{\g l}\log^\g(s^2/d) \Big(\frac{s^2}{d}\Big)^{(2c\log 6-1/4) 4^l} \\ &\le C \s^{2\g}4^{\g l}\log^\g(s^2/d) \Big(\frac{s^2}{d}\Big)^{-4^{l}/8}
\end{align*}
if we chose $c$ such that $2c\log 6\le 1/8$.  In conclusion, under this choice of $c$, using the fact that $s^2\ge 4d$, we get 
\begin{equation}\label{proof_eq3}
	d \max_{i\not\in S} V_i \le C \s^{2\g}d\log^\g(s^2/d) \sqrt{\frac{s^2}{d}} \le \frac{C \s^{2\g}s^2}{\log^\g(s^2/d)}.
\end{equation}

\medskip

$3^{\circ}.$ {\it Bias for $i\in S$.} If $i\in S$, the bias has the form
	\begin{align*}
		B_i &= \sum_{l=0}^L \esp \hat{P}_{\g,K_l,M_l}(X)\,\prob(\s t_{l-1}<|X|\le\s t_l) 
		+ \esp |X|^\g \, \prob(|X|>\s t_L ) - |\t_i|^\g,
	\end{align*}
	where $X\sim\calN(\t_i,\s^2)$. We will analyze this expression separately in three different ranges of values of $|\t_i|$.

\smallskip

$3.1^{\circ}.$ {\it Case $0<|\t_i|<2\s t_0$. } In this case, we use the bound
	\begin{align*}
		|B_i| &\le \max_l \big|\esp \hat{P}_{\g,K_l,M_l}(X)-|\t_i|^\g\big| + \big|\esp|X|^\g-|\t_i|^\g \big| \, \prob(|X|>\s t_L),
	\end{align*}
	where $X\sim\calN(\t_i,\s^2)$. Since $|\t_i|\le M_l$ for all~$l$, we can use Lemma~\ref{lemma_CaiLow2} to obtain 
	\begin{equation}\label{proof_eq3a}
		\big|\esp \hat{P}_{\g,K_l,M_l}(X)-|\t_i|^\g\big| \le C \Big(\frac{M_l}{K_l}\Big)^\g \le \frac{C\s^\g}{\log^{\g/2}(s^2/d)}.
	\end{equation}
	In addition, using Lemma~\ref{lemma_pluggedestimator} and the inequalities $t_L>3t_0\ge 3|\t_i|/(2\s)$, $3\sqrt{2\log(d)}\le t_L\le 6\sqrt{2\log(d)}$ we get
	\begin{align*}
		\big|\esp|X|^\g-|\t_i|^\g \big| \ \prob(|X|>\s t_L) &\le C (\s^\g + \s^2|\t_i|^{\g-2}\fcar_{|\t_i|>\s})  \, \prob(|\xi|>t_L-|\t_i|/\s)\\
		 &\le C \s^\g (1+(\log^{\g/2}d)\fcar_{\g>2} )\, \prob(|\xi|>t_L/3) \le \frac{C\s^\g}{\log^{\g/2}(s^2/d)}
	\end{align*}
	where $\xi\sim\nzeroun$. It follows that
	\begin{equation}\label{proof_eq4}
		s^2 \max_{i:0<|\t_i|<2\s t_0} B^2_i \le \frac{C\s^{2\g}s^2}{\log^\g(s^2/d)}.
	\end{equation}
\smallskip

$3.2^{\circ}.$ {\it Case $2\s t_0<|\t_i|\le2\s t_L$. }	
	 Let $l_0\in\{1,\ldots,L-1\}$ be the integer such that $\s t_{l_0} < |\t_i| \le \s t_{l_0+1}$.  We have
	\begin{align}\label{proof_eq4a}
		|B_i| &\le \sum_{l=0}^{l_0-1} \big|\esp \hat{P}_{\g,K_l,M_l}(X)-|\t_i|^\g\big|\cdot\prob(\s t_{l-1}<|X|\le\s t_l)  \\
		&+ \max_{l\ge l_0} \big|\esp \hat{P}_{\g,K_l,M_l}(X)-|\t_i|^\g\big| + \big|\esp|X|^\g-|\t_i|^\g \big|,\nonumber
 	\end{align}
	where $X\sim\calN(\t_i,\s^2)$.  Analogously to \eqref{proof_eq3a} we find
	\begin{equation*}\label{}
		 \max_{l\ge l_0}\big|\esp \hat{P}_{\g,K_l,M_l}(X)-|\t_i|^\g\big|  \le \frac{C\s^\g}{\log^{\g/2}(s^2/d)}.
	\end{equation*}
	Next, Lemma~\ref{lemma_pluggedestimator} and the fact that $|\t_i|>2\s t_0=2\s \sqrt{2\log(s^2/d)}$ imply
	{\color{black}
	\begin{align}\label{eqproofo}
		\big|\esp|X|^\g-|\t_i|^\g \big| &\le C\s^2 |\t_i|^{\g-2}\le C\left( \frac{\s^\g }{\log^{\g/2}(s^2/d)}\fcar_{\g\le 1}+
		\frac{\s |\t_i|^{\g-1}}{\sqrt{\log(s^2/d)}}\fcar_{\g>1} \right).
	\end{align}
	}
Finally, we consider the first sum on the right hand side of \eqref{proof_eq4a}. Notice that
	\begin{equation*}
		\prob(\s t_{l-1}<|X|\le\s t_l) \le e^{-\frac{\t_i^2}{8\s^2}}, \quad l=0,\dots,l_0-1,
	\end{equation*}
	since $|\t_i| > \s t_{l_0} \ge 2\s t_l$ for $l<l_0$.	Using these inequalities and Lemma~\ref{lemma_CaiLowamlior} we get 
	\begin{align*}
		\sum_{l=0}^{l_0-1} \big|\esp \hat{P}_{\g,K_l,M_l}(X)\big|\cdot\prob(\s t_{l-1}<|X|\le\s t_l) &\le C \s^\g \sum_{l=0}^{l_0-1} 6^{K_l} K_l^{1+\g/2} e^{(c-1)\t_i^2/(8\s^2)} \\
		&\le C \s^\g \sum_{l=0}^{l_0-1} t_l^{2+\g} e^{(c\log 6+c-1)t_l^2/2}.
	\end{align*}
	Choose $c>0$ such that $c \log 6+c <1/4$. As $t_l = 2^l \sqrt{2\log(s^2/d)}$, this yields
	\begin{align*}
		\sum_{l=0}^{l_0-1} \big|\esp \hat{P}_{\g,K_l,M_l}(X)\big|\cdot\prob(\s t_{l-1}<|X|\le\s t_l)  &\le C \s^\g e^{-(1/2)\log(s^2/d)} \\
		&\le \frac{C\s^\g}{\log^{\g/2}(s^2/d)}.
	\end{align*}
	Furthermore,
	\begin{align}\label{eqproof4ab}
		\sum_{l=0}^{l_0-1} |\t_i|^\g \prob(\s t_{l-1}<|X|\le\s t_l) &\le l_0 |\t_i|^\g e^{-\frac{\t_i^2}{8\s^2}} \\
		&\le C \log\Big(\frac{\t_i^2}{2\s^2\log(s^2/d)}\Big) |\t_i|^\g e^{-\frac{\t_i^2}{8\s^2}} \phantom{\sum_l^l}\nonumber\\
		&\le C \s^\g e^{-\frac{\t_i^2}{16\s^2}}, \phantom{\Big()\sum^l}\nonumber
	\end{align}
	where we have used that $|\t_i|>\s t_{l_0}= \s 2^{l_0} \sqrt{2\log(s^2/d)}$. Since $l_0\ge 1$, this also implies that \eqref{eqproof4ab} does not exceed
		\begin{equation*}
		\frac{C\s^\g}{\log^{\g/2}(s^2/d)}.
	\end{equation*}
	Combining the above arguments yields
	{\color{black}
	\begin{equation}\label{proof_eq5}
		\Big( \sum_{i\in S: 2\s t_0<|\t_i|\le2\s t_L} B_i \Big)^2 \le C\left(\frac{\s^{2\g}s^2}{\log^\g(s^2/d)} + \frac{\s^2}{\log(s^2/d)}\Big( \sum_{i\in S} |\t_i|^{\g-1} \Big)^2\fcar_{\g>1}\right).
	\end{equation}
	}
\vspace{3mm}
$3.3^{\circ}.$ {\it Case $|\t_i|>2\s t_L$. }			
	 Recall that the bias $B_i$ has the form
	\begin{align*}
		B_i &= \sum_{l=0}^L \esp \hat{P}_{\g,K_l,M_l}(X)\,\prob(\s t_{l-1}<|X|\le\s t_l) 
		+ \esp |X|^\g \, \prob(|X|>\s t_L ) - |\t_i|^\g,
	\end{align*}
	where $X\sim\calN(\t_i,\s^2)$. Using Lemma~\ref{lemma_CaiLowamlior} we get 
	\begin{align*}
		\Big|\sum_{l=0}^L \esp \hat{P}_{\g,K_l,M_l}(X)\,\prob(\s t_{l-1}<|X|\le\s t_l)\Big| &\le 
		\max_{l=0,\ldots,L} \big|\esp\hat{P}_{\g,K_l,M_l}(X)\big|\,\prob(|X|\le \s t_L) 
		\\ 
		&\le C\s^\g 6^{K_L} K_L^{1+\g/2} e^{c\t_i^2/(8\s^2)} e^{-\t_i^2/(8\s^2)} \phantom{\sum_{l=0}} 
		\\
		&\le C \s^\g (\log d)^{1+\g/2} \, 6^{9c\log d} \, e^{9(c-1)\log d} 
	\end{align*}
	and the last upper bound is smaller than $C\s^\g \log^{-\g/2}(s^2/d)$ if $c>0$ is small enough. On the other hand, using \eqref{eqproofo} we find that 
	{\color{black}
	\begin{align*}\label{ggg}
		\big|\esp |X|^\g \, \prob( |X|>\s t_L ) - |\t_i|^\g\big| &\le \big|\esp |X|^\g - |\t_i|^\g\big| + |\t_i|^\g \prob(|X|\le \s t_L) \\
		&\le C\left( \frac{\s^\g }{\log^{\g/2}(s^2/d)}\fcar_{\g\le 1}+
		\frac{\s |\t_i|^{\g-1}}{\sqrt{\log(s^2/d)}}\fcar_{\g>1} \right) + 2|\t_i|^\g e^{-\frac{\t_i^2}{8\s^2}} \nonumber \\
		&\le C\left(\frac{\s^{\g}}{\log^{\g/2}(s^2/d)} + \frac{\s |\t_i|^{\g-1}}{\sqrt{\log(s^2/d)}} \fcar_{\g>1}\right). \nonumber
	\end{align*}
	Finally, we get
	\begin{equation}\label{proof_eq6}
		\Big(\sum_{i\in S: |\t_i|>2\s t_L} B_i\Big)^2 \le C\left(\frac{\s^{2\g}s^2}{\log^\g(s^2/d)} + \frac{\s^2}{\log(s^2/d)}\Big( \sum_{i\in S} |\t_i|^{\g-1} \Big)^2\fcar_{\g>1}\right).
	\end{equation}	
}
\bigskip

$4^{\circ}.$ {\it Variance for $i\in S$.} We consider the same three cases as in item $3^{\circ}$ above. For the first two cases, it suffices to use a coarse bound granting that, for all $i\in S$, 
\begin{align} \label{rough}
		V_i \le \esp_\t [\xi_\g^2(y_{1,i}, y_{2,i})] = \sum_{l=0}^{L} \esp \hat{P}^2_{\g,K_l,M_l}(X) \,\prob(\s t_{l-1}<|X|\le\s t_l) + \esp |X|^{2\g}  \,\prob(|X|>\s t_L)
	\end{align}
	where $X\sim\calN(\t_i,\s^2)$.

\medskip

$4.1^{\circ}.$ {\it Case $0<|\t_i|<2\s t_0$.}
In this case, we deduce from \eqref{rough} that
	\begin{equation*}
		V_i \le \max_{l=0,\ldots,L} \esp \hat{P}^2_{\g,K_l,M_l}(X) + \esp |X|^{2\g},
	\end{equation*}
	where $X\sim\calN(\t_i,\s^2)$. Lemma~\ref{lemma_CaiLow2} and the fact that $\esp |X|^{2\g}\le C(\s^{2\g} + \s^2|\t_i|^{2\g-2} + |\t_i|^{2\g})$ (cf. Lemma~\ref{lemma_pluggedestimator}) imply
	\begin{align*}
		V_i &\le C (M_L^{2\g} 2^{8K_L} + \s^{2\g} + |\t_i|^{2\g}) \\
		&\le C(\s^{2\g} \log^\g(d) \, d^{72c\log 2} + \s^{2\g}\log^\g(s^2/d)).
	\end{align*}
	Hence, choosing $c>0$ small enough and using the assumption $s\ge 2\sqrt{d}$, we conclude that
	\begin{equation}\label{proof_eq7}
	s \max_{i:0<|\t_i|<2\s t_0} V_i \le \frac{C\s^{2\g}s^2}{\log^\g(s^2/d)}.
	\end{equation}	
\smallskip

$4.2^{\circ}.$ {\it Case $2\s t_0<|\t_i|\le2\s t_L$.}	
	 As in item $3.2^{\circ}$ above, we denote by $l_0\in\{1,\ldots,L-1\}$ the integer such that $\s t_{l_0} < |\t_i| \le \s t_{l_0+1}$. We deduce from \eqref{rough} that
	\begin{align*}
		V_i &\le \max_{l=0,\ldots,l_0-1} \esp \hat{P}^2_{\g,K_l,M_l}(X) \, \prob(|X|\le\s t_{l_0-1}) + \max_{l=l_0,\ldots,L} \esp \hat{P}^2_{\g,K_l,M_l}(X) + \esp|X|^{2\g},
	\end{align*}
	where $X\sim\calN(\t_i,\s^2)$.
	The second and third terms on the right hand side are controlled as in  item~$4.1^{\circ}$, with the only difference that now we have
	$
		\esp|X|^{2\g} \le C  (\s^{2\g} + |\t_i|^{2\g}) \le C \s^{2\g}\log^{\g} (d).
	$		
	For the first term, we find using Lemma~\ref{lemma_CaiLowamlior} that, for $X\sim\calN(\t_i,\s^2)$,
	\begin{align}\label{gg}
		&\max_{l=0,\ldots,l_0-1} \esp \hat{P}^2_{\g,K_l,M_l}(X) \, \prob(|X|\le\s t_{l_0-1}) \phantom{\Big(\Big)}\\
		&\qquad \le C\s^{2\g} \Big[ (\s/M_0)^{4-2\g} + (\s/M_{l_0-1})^{4-2\g}\Big] 6^{2K_{l_0-1}} e^{c\log(1+4/c)\t_i^2/(4\s^2)}\, e^{-\t_i^2/(8\s^2)} \phantom{\Big(\Big)} \nonumber \\
		&\qquad \le C\s^{2\g} \log^{\g}(d) e^{(c\log 6+4c\log(1+4/c)-1/2)t^2_{l_0-1}}. \phantom{\Big(\Big)}\nonumber 
	\end{align}
	Choosing $c>0$ small enough allows us to obtain the desired bound
	\begin{equation}\label{proof_eq8}
		s \max_{i: 2\s t_0<|\t_i|\le2\s t_L} V_i \le \frac{C\s^{2\g}s^2}{\log^\g(s^2/d)} .
	\end{equation}	

\smallskip

$4.3^{\circ}.$ {\it Case $|\t_i|>2\s t_L$.}
	We first note that, for $X\sim\calN(\t_i,\s^2)$:
\begin{align*}
	\var \big(|y_{1,i}|^\g \,\fcar_{|y_{2,i}|>\s t_L} \big)&=\prob(|X|>\s t_L)\big[\var (|X|^\g) + (\esp |X|^\g)^2 \prob(|X|\le\s t_L) \big]\\
	&\le C\big[ \s^2|\t_i|^{2\g-2} + |\t|_i^{2\g}\prob(|X|\le\s t_L)\big],
	\end{align*}
	{\color{black}
where we have used the inequalities $\var (|X|^\g)\le C\s^2|\t_i|^{2\g-2}$  and  $(\esp |X|^\g)^2\le \esp |X|^{2\g}\le C(\s^2|\t_i|^{2\g-2} + |\t_i|^{2\g})\le C|\t_i|^{2\g}$ that are valid due to Lemma~\ref{lemma_pluggedestimator} and to the fact that $|\t_i|>\s$. Thus, we obtain
	\begin{align*}
		V_i &\le 2 \var \Big(\sum_{l=0}^{L} \hat{P}_{\g,K_l,M_l}(y_{1,i}) \,\fcar_{\s t_{l-1}<|y_{2,i}|\le\s t_l} \Big)+2 \var \big(|y_{1,i}|^\g \,\fcar_{|y_{2,i}|>\s t_L} \big)\\
	&\le 2  \sum_{l=0}^{L} \esp \hat{P}^2_{\g,K_l,M_l}(X) \,\prob(\s t_{l-1}<|X|\le\s t_l) + C\big[ \s^2|\t_i|^{2\g-2} + |\t_i|^{2\g}\prob(|X|\le\s t_L)\big]	\\
	&\le C \Big(\max_{l=0,\ldots,L} \esp \hat{P}^2_{\g,K_l,M_l}(X) \, \prob(|X|\le\s t_L) 
		+ \s^{2\g} +  \s^2|\t_i|^{2\g-2}\fcar_{\g>1}+ |\t_i|^{2\g}\prob(|X|\le\s t_L) \Big) \phantom{\sum_L^L}
			\end{align*}
since for $0<\g\le 1$ we have $|\t_i|^{2\g-2}\le \s^{2\g-2} $ due to the fact that $|\t_i|>\s$. In the last display, the term
$
\max_{l=0,\ldots,L} \esp \hat{P}^2_{\g,K_l,M_l}(X) \, \prob(|X|\le\s t_L) 
$		
is controlled via an argument analogous to \eqref{gg} while 
$$
|\t_i|^{2\g}\prob(|X|\le\s t_L)\le |\t_i|^{2\g}\prob(|\xi|\ge |\t_i|/(2\s)) \le 2|\t_i|^{2\g} e^{-\frac{\t_i^2}{8\s^2}} 
		\le C\s^{2\g}, \ \ \xi\sim\calN(0,1),
$$ 
due to the fact that $t_L<|\t_i|/(2\s)$.
This allows us to conclude that
	\begin{eqnarray}\label{proof_eq9}
		\sum_{i\in S: |\t_i|>2\s t_L} V_i \le C\left(\frac{\s^{2\g}s^2}{\log^\g(s^2/d)} + \s^2 \sum_{i=1}^d|\t_i|^{2\g-2}\fcar_{\g>1}\right).
	\end{eqnarray}	
	}
	The result of the theorem follows now from~(\ref{proof_eq1}), (\ref{proof_eq2}), (\ref{proof_eq3}), (\ref{proof_eq4}), (\ref{proof_eq5}), (\ref{proof_eq6}), (\ref{proof_eq7}), (\ref{proof_eq8}), and (\ref{proof_eq9}). 
	
{\color{black}
\subsection{Proof of Theorem~\ref{theorem_upperbound_even}}
Set $\s_*=\varepsilon \sqrt{\g}$. Since $y_{i,m}$ are mutually independent with $ \esp_\t \Big[\prod_{m=1}^{\g}y_{i,m}\Big]=\t_i^\g$ we have
\begin{align*}\nonumber
\esp_\t \Big[\big(\prod_{m=1}^{\g}y_{i,m}-\t_i^\g \big)^2\Big] &= \esp_\t \Big[\prod_{m=1}^{\g}y_{i,m}^2\Big]-\t_i^{2\g} =(\t_i^2+\s_*^2)^\g -\t_i^{2\g}\\
&=\sum_{j=1}^\g {\g \choose j}\t_i^{2(\g-j)}\s_*^{2j} \le C(\s_*^2\t_i^{2(\g-1)}+ \s_*^{2\g}). \label{even1}
\end{align*}
The theorem follows from this inequality and the fact that
$$
\esp_\t \big[(\tilde{N}_\g -N_\g(\t))^2\big] = \esp_\t \Big[\big(\sum_{i=1}^d \big\{\prod_{m=1}^{\g}y_{i,m}-\t_i^\g\big\}\big)^2\Big] =
\sum_{i=1}^d  \esp_\t \Big[\big(\prod_{m=1}^{\g}y_{i,m}-\t_i^\g \big)^2\Big].
$$

}	

{\color{black}
\subsection{Proof of Theorem~\ref{corollary_upperbound1}}

{\it Proof of part (i).}  
Set
$\phi = \varepsilon^{\g}s\log^{-\g/2}(s^2/d)$.
First, assume that $\|\t\|_\g\ge\phi^{1/\g}$. Then, using the inequality $|a-b|\ |b|^{\g-1} \le |a^\g-b^\g|$, $\forall a,b>0$, and Theorem~\ref{theorem_upperbound1}  we get
\begin{align*}
	\esp_\t \big(\hat{n}^*_\g-\|\t\|_\g\big)^2 &\le \frac{\esp_\t \big(\hat{N}^*_\g-N_\g(\t)\big)^2}{\|\t\|_\g^{2\g-2}}
	\\
	 &\le  C\left(\phi^2 + 
		 \frac{  \varepsilon^2 s^{2/\g}}{\log(s^2/d)} \|\t\|_{\g}^{2\g-2}
		\right) (\|\t\|_\g^{2\g-2})^{-1}
		\\
	 &\le  C \left(\phi^{2/\g} + 
		 \frac{  \varepsilon^2 s^{2/\g}}{\log(s^2/d)}  \right)\le C \phi^{2/\g},
\end{align*}
which is the desired bound. Next, assume that $\|\t\|_\g<\phi^{1/\g}$. Using the inequality $|a-b|\le |a^\g-b^\g|^{1/\g}$, $\forall a,b>0$, Jensen's inequality, and Theorem~\ref{theorem_upperbound1}  we get
\begin{align*}
	\esp_\t \big[(\hat{n}^*_\g-\|\t\|_\g)^2\big] &\le \esp_\t \big[|\hat{N}^*_\g-N_\g(\t)|^{2/\g} \big]
	\\
	 &\le  C\left(\phi^2 + 
		 \frac{\varepsilon^2 s^{2/\g}}{\log(s^2/d)} \|\t\|_\g^{2\g-2}
		\right) ^{1/\g}
	\\
	 &\le  C\left(\phi^2 + 
		 \frac{\varepsilon^2 s^{2/\g}}{\log(s^2/d)} \phi^{2-2/\g}
		\right) ^{1/\g}
		\le C \phi^{2/\g}.	
\end{align*}
{\it Proof of part (ii).} We follow the same lines as the proof of part (i) but now we set $\phi=\varepsilon^{\g}d^{1/2}$. If $\|\t\|_\g\ge\phi^{1/\g}$ we use the inequality $|a-b|\ |b|^{\g-1} \le |a^\g-b^\g|$, $\forall a,b>0$, Theorem~\ref{theorem_upperbound_even} and the fact that $\|\t\|_{2\g-2}\le \|\t\|_\g$ for $\g\ge 2$ to obtain
\begin{align*}
	\esp_\t \big(\hat{n}^*_\g-\|\t\|_\g\big)^2 &\le \frac{\esp_\t \big(\hat{N}^*_\g-N_\g(\t)\big)^2}{\|\t\|_\g^{2\g-2}}
	\\
	 &\le  C\left(\phi^2 + \varepsilon^2 \|\t\|_{2\g-2}^{2\g-2}
		\right) (\|\t\|_\g^{2\g-2})^{-1}
		\le  C \left(\phi^{2/\g} + 
		 \varepsilon^2 \right)\le C \phi^{2/\g},
\end{align*}
which is the desired bound. On the other hand, if $\|\t\|_\g<\phi^{1/\g}$ the inequality $|a-b|\le |a^\g-b^\g|^{1/\g}$, $\forall a,b>0$, Jensen's inequality, Theorem~\ref{theorem_upperbound_even} and the fact that $\|\t\|_{2\g-2}\le \|\t\|_\g$ for $\g\ge 2$ yield
\begin{align*}
	\esp_\t \big[(\hat{n}^*_\g-\|\t\|_\g)^2\big] &\le \esp_\t \big[|\hat{N}^*_\g-N_\g(\t)|^{2/\g} \big]
	\\
	 &\le  C\left(\phi^2 + \varepsilon^2 \|\t\|_{2\g-2}^{2\g-2}
		\right)^{1/\g} \le C \phi^{2/\g}.
\end{align*}

}

\subsection{Proof of Theorem~\ref{theorem_upperbound2}}

Denoting by $S$ the support of $\t$ we have  
\begin{align*}
	\hat{N}_\g^* -N_\g(\t) &= \sum_{i\in S} \big\{ |y_i|^\g-\varepsilon^\g\a_\g-|\t_i|^\g\big\} - \sum_{i\in S} \big\{|y_i|^\g-\varepsilon^\g\a_\g\big\} \fcar_{y_i^2\le 2\varepsilon^2\log(1+d/s^2)} \\
	&+ \sum_{i\not\in S} \big\{|y_i|^\g-\varepsilon^\g\a_\g\big\} \fcar_{y_i^2> 2\varepsilon^2\log(1+d/s^2)},
\end{align*}
so that
\begin{align*}
	\esp_\t\big[\big(\hat{N}_\g^* -N_\g(\t)\big)^2 \big]&\le 4\esp_\t \Big[\Big( \sum_{i\in S} \big\{ |y_i|^\g-|\t_i|^\g\big\} \Big)^2 \Big]+  2^{\g+2}\varepsilon^{2\g}s^2\log^\g(1+d/s^2) \\ &+ 4\varepsilon^{2\g}s^2\a_\g^2 +  4d\varepsilon^{2\g}\esp\Big[\big(|\xi|^\g-\a_\g\big)^2\fcar_{\xi^2>2\log(1+d/s^2)}\Big]
\end{align*}
where $\xi\sim\nzeroun$. Using Lemma~\ref{lemma_pluggedestimator} we get
{\color{black}
\begin{align*}
	\esp_\t\Big[ \Big( \sum_{i\in S} \big\{ |y_i|^\g-|\t_i|^\g\big\} \Big)^2\Big] & =  \sum_{i\in S} \esp_\t\big[ ( |y_i|^\g-|\t_i|^\g)^2\big] +
	\sum_{i,j\in S, i\ne j} \big(\esp_\t |y_i|^\g-|\t_i|^\g\big) \big(\esp_\t  |y_j|^\g -|\t_j|^\g\big)
	\\
	&\le 	
	C\Big(\varepsilon^{2\g}s + \varepsilon^{4}\sum_{|\t_i|>\varepsilon} |\t_i|^{2\g-4} \Big) + C\Big(\varepsilon^{2\g}s^2
	+  \varepsilon^{4}\Big(\sum_{|\t_i|>\varepsilon} |\t_i|^{\g-2}\Big)^2\Big)
	\\
	&\le 	
	C\Big(\varepsilon^{2\g}s^2 + \varepsilon^{2}\sum_{i=1}^d |\t_i|^{2\g-2} \fcar_{\g\ge1} + 
	  \varepsilon^{2}\Big(\sum_{i=1}^d |\t_i|^{\g-1}\Big)^2\fcar_{\g\ge1}\Big)
	 \\
	&\le C\Big(\varepsilon^{2\g}s^2 + \varepsilon^2 s^{2/\g}\|\t\|_\g^{2\g-2} \fcar_{\g\ge1}\Big).
\end{align*}
where we have used \eqref{21} and \eqref{22}.
}
Next, we use the fact that, for $\xi\sim\nzeroun$ and any 
$x>0$, $a\ge0$, 
\begin{equation*}
	\esp \big( |\xi|^a \fcar_{|\xi| > x} \big) \le C x^{a-1} e^{-x^2/2},  \quad  \prob(|\xi|>x)\ge C(1+x)^{-1}e^{-x^2/2},
\end{equation*}
where $C$ depends only on $a$. Choosing $x= \sqrt{2\log(1+d/s^2)}\ge \sqrt{2\log(5)}$ (as $d\ge 4s^2$), we obtain 
\begin{equation*}
	\a_\g \le C \frac{x^{\g-1}e^{-x^2/2}}{x^{-1}e^{-x^2/2}} \le C \log^{\g/2}(1+d/s^2)
\end{equation*}
The same property implies that
\begin{align*}
	\esp\Big[\big(|\xi|^\g-\a_\g\big)^2\fcar_{\xi^2>2\log(1+d/s^2)}\Big] &\le 2\,\esp \big(|\xi|^{2\g}\fcar_{\xi^2>2\log(1+d/s^2)}\big) + 2\a_\g^2 \prob\big(\xi^2>2\log(1+d/s^2)\big) \phantom{a^2_1\Big(\Big)} \\
	&\le C \frac{s^2}{d} \log^{\g}(1+d/s^2).
\end{align*}
Combining the above inequalities proves the theorem.

{\color{black}\subsection{Proof of Theorem~\ref{corollary_upperbound2}}

We act as in the proof of Theorem~\ref{corollary_upperbound1} with suitable modifications.
Namely, set
$\phi = \varepsilon^{\g}s\log^{\g/2}(s^2/d)$.
If $\|\t\|_\g\ge\phi^{1/\g}$ then using Theorem~\ref{theorem_upperbound2}  we get
\begin{align*}
	\esp_\t \big(\hat{n}^*_\g-\|\t\|_\g\big)^2 &\le \frac{\esp_\t \big(\hat{N}^*_\g-N_\g(\t)\big)^2}{\|\t\|_\g^{2\g-2}}
	\le  C\left(\phi^2 + 
		 \varepsilon^2 s^{2/\g} \|\t\|_{\g}^{2\g-2}
		\right) (\|\t\|_\g^{2\g-2})^{-1}
		\\
	 &\le  C \left(\phi^{2/\g} + 
		  \varepsilon^2 s^{2/\g}  \right)\le C \phi^{2/\g}.
\end{align*}
On the other hand, if $\|\t\|_\g<\phi^{1/\g}$ then using  Theorem~\ref{theorem_upperbound2}  we get
\begin{align*}
	\esp_\t \big[(\hat{n}^*_\g-\|\t\|_\g)^2\big] &\le \esp_\t \big[|\hat{N}^*_\g-N_\g(\t)|^{2/\g} \big]
	\le  C\left(\phi^2 + 
		 \varepsilon^2 s^{2/\g} \|\t\|_\g^{2\g-2}
		\right) ^{1/\g}
	\\
	 &\le  C\left(\phi^2 + 
		 \varepsilon^2 s^{2/\g} \phi^{2-2/\g}
		\right) ^{1/\g}
		\le C \phi^{2/\g}.	
\end{align*}

}

\section{Lemmas for the proof of Theorem~\ref{theorem_upperbound1}}

\begin{lemma}\label{lemma_pluggedestimator}
	If $X\sim\calN(\vt,\s^2)$ and $\g>0$, then
		\begin{align*}
		\ |\esp(|X|^\g) - |\vt|^\g| &\le C\Big(\s^\g \fcar_{|\vt|\le\s}  +  \s^2|\vt|^{\g-2} \fcar_{|\vt|>\s}\Big), \\
		\ \var (|X|^\g) &\le C\Big(\s^{2\g} \fcar_{|\vt|\le\s}  +  \s^2|\vt|^{2\g-2} \fcar_{|\vt|>\s}\Big).
	\end{align*}
	\end{lemma}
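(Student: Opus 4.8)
The plan is to reduce everything to known estimates for standard normal moments and to a first-order Taylor expansion of $x\mapsto |x|^\g$ around $\vt$. Write $X = \vt + \s Z$ with $Z\sim\calN(0,1)$. The two regimes $|\vt|\le\s$ and $|\vt|>\s$ are handled separately, and in each case one bounds both the bias $|\esp(|X|^\g)-|\vt|^\g|$ and the variance $\var(|X|^\g)$.

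\emph{Case $|\vt|\le\s$.} Here one simply uses $|X|^\g = \s^\g |Z + \vt/\s|^\g$ and the elementary inequality $|a+b|^\g \le C_\g(|a|^\g + |b|^\g)$ (for $\g\le 1$ one takes $C_\g=1$; for $\g>1$ it is convexity). Since $|\vt/\s|\le 1$, we get $\esp|X|^\g \le C\s^\g(\esp|Z|^\g + 1) \le C\s^\g$, and also $|\vt|^\g\le\s^\g$, so the bias is $\le C\s^\g$. For the variance, $\var(|X|^\g)\le \esp|X|^{2\g} \le C\s^{2\g}(\esp|Z|^{2\g}+1)\le C\s^{2\g}$. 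This disposes of the first regime with no real work.

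\emph{Case $|\vt|>\s$.} This is the main part. The idea is a second-order Taylor argument: for $|x|$ in a neighborhood of $|\vt|$ bounded away from $0$, $|x|^\g$ is smooth with $\big||x|^\g - |\vt|^\g - \g\,\mathrm{sign}(\vt)|\vt|^{\g-1}(x-\vt)\big| \le C|\vt|^{\g-2}(x-\vt)^2$, which after taking expectations and using $\esp(X-\vt)=0$, $\esp(X-\vt)^2=\s^2$ gives the bias bound $C\s^2|\vt|^{\g-2}$ — \emph{provided} $X$ stays in that neighborhood, say $|X|\ge |\vt|/2$. The contribution of the complementary event $\{|X| < |\vt|/2\}$, which forces $|Z| > |\vt|/(2\s) > 1/2$, is controlled by a Gaussian tail bound: $\prob(|Z|>|\vt|/(2\s)) \le 2e^{-\vt^2/(8\s^2)}$, and on this event $\big||X|^\g - |\vt|^\g\big|$ is bounded by $C(|X|^\g + |\vt|^\g)$, whose expectation against the indicator is $\le C\,\esp\big(\s^\g|Z|^\g + |\vt|^\g\big)\fcar_{|Z|>|\vt|/(2\s)}$. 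Using $\esp(|Z|^a\fcar_{|Z|>t}) \le Ct^{a-1}e^{-t^2/2}$ one sees this term is exponentially small in $\vt^2/\s^2$, hence dominated by $\s^2|\vt|^{\g-2}$ (since $t^{a}e^{-t^2/8}\to 0$ faster than any polynomial). The variance in this regime is handled analogously: write $\var(|X|^\g)\le \esp\big(|X|^\g - |\vt|^\g\big)^2$, split on $\{|X|\ge|\vt|/2\}$ where the linearization gives a term $\le C\g^2|\vt|^{2\g-2}\esp(X-\vt)^2 = C\s^2|\vt|^{2\g-2}$ (plus the quadratic remainder, which is smaller since $\s/|\vt|<1$), and on the tail event where the same Gaussian-tail computation gives an exponentially small contribution.

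\emph{Main obstacle.} The only delicate point is making the Taylor remainder estimate uniform: $|x|^\g$ has an unbounded second derivative as $x\to 0$, so one genuinely needs the restriction to $\{|X|\ge|\vt|/2\}$ and must verify that the discarded tail is negligible compared to $\s^2|\vt|^{\g-2}$ (resp. $\s^2|\vt|^{2\g-2}$). This is where the inequality $\esp(|Z|^a\fcar_{|Z|>t})\le Ct^{a-1}e^{-t^2/2}$ and the crude bound $e^{-\vt^2/(8\s^2)}\le C(\s/|\vt|)^{2}$ (valid since $|\vt|/\s>1$, with $C$ absorbing the sup of $u^2 e^{-u^2/8}$ over $u\ge 1$) do the job. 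Everything else is bookkeeping with Gaussian moments; I would organize it as the two displayed cases above and invoke these two standard tail facts without further comment.
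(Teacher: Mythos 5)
Your plan is essentially the paper's proof. For $|\vt|\le\s$ both arguments are the same trivial moment bound, and for $|\vt|>\s$ the paper also expands $|x|^\g$ to second order around $\vt$, kills the first-order term using the symmetry of the Gaussian (equivalently $\esp(X-\vt)=0$), and controls the region where $|x|^\g$ is not smooth by exactly the Gaussian tail facts you quote. One concrete correction is needed, however: you localize on the one-sided event $\{|X|\ge|\vt|/2\}$, but the uniform second-derivative bound $|g''(u)|\le C|\vt|^{\g-2}$ fails on that event when $\g>2$, because $\g(\g-1)|u|^{\g-2}$ is then increasing in $|u|$ and $|X|$ is unbounded above there; the Taylor remainder along the segment from $\vt$ to a large $x$ is of order $|x|^{\g-2}(x-\vt)^2$, not $|\vt|^{\g-2}(x-\vt)^2$. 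The paper instead localizes on the two-sided event $\{|X-\vt|\le|\vt|/2\}$, on which $|\vt|/2\le|X|\le 3|\vt|/2$ and the bound $C|\vt|^{\g-2}$ holds for every $\g>0$; its complement is precisely $\{|Z|>|\vt|/(2\s)\}$, which your tail computation already handles, so the fix costs nothing. This is not a pedantic point, since the lemma is applied with exponents as large as $2\g$. For the variance the two routes genuinely differ: you rerun the splitting on $\esp\big(|X|^\g-|\vt|^\g\big)^2$ (where the same two-sided localization caveat applies), whereas the paper sets $b_a=\esp|X|^a-|\vt|^a$, writes $\var(|X|^\g)=b_{2\g}+|\vt|^{2\g}-(b_\g+|\vt|^\g)^2$, and thereby reduces the variance bound to the already-proved bias bound at exponent $2\g$; the latter is shorter and avoids a second case analysis, while yours is more self-contained.
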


\begin{proof}
	Set for brevity
	\begin{equation*}
		g(x)=|x|^\g, \quad b_\g = \esp(|X|^\g)- |\vt|^\g.
	\end{equation*}
	First, note that if $|\vt|\le\s$ we have $|b_\g| \le C\s^\g$. Now, consider the case $|\vt|>\s$. Then,
	\begin{align*}
		|b_\g| \le \frac{1}{\sqrt{2\pi}\s}\left[ \Big|\int_{|x|>|\vt|/2}(g(x+\vt)-g(\vt))e^{-\frac{x^2}{2\s^2}}dx\Big|+\Big|\int_{|x|\le |\vt|/2}(g(x+\vt)-g(\vt))e^{-\frac{x^2}{2\s^2}}dx\Big|\right].
	\end{align*}
We now bound separately the two terms on the right-hand side of this inequality. Using the second order Taylor expansion of $g$ around $\vt$ and the symmetry of the Gaussian distribution we get 
	\begin{align*}
		\Big|\int_{|x|\le |\vt|/2}(g(x+\vt)-g(\vt))e^{-\frac{x^2}{2\s^2}}dx\Big|&\le 
		\frac12 \int_{|x|\le |\vt|/2} \max_{|u|\le  |\vt|/2}|g''(\vt+u)|\,x^2 e^{-\frac{x^2}{2\s^2}}dx\\
		&\le C|\vt|^{\g-2} \int_{|x|\le |\vt|/2} x^2 e^{-\frac{x^2}{2\s^2}}dx \le C\s^3|\vt|^{\g-2}.
	\end{align*}
	On the other hand, the first integral in the bound for $|b_\g|$ is smaller than
	\begin{align*}
		C \int_{|x|>|\vt|/2} \big\{ |x|^\g+|\vt|^\g \big\} e^{-\frac{x^2}{2\s^2}}dx &\le C\s^{\g+1}\int_{|t|>|\vt|/(2\s)}|t|^\g e^{-\frac{t^2}{2}}dt + C\s|\vt|^\g \int_{|t|>|\vt|/(2\s)} e^{-\frac{t^2}{2}}dt  \\
		&\le C\s^{\g+1} \frac{|\vt|^{\g-1}}{\s^{\g-1}} e^{-\frac{\vt^2}{8\s^2}}  + C \s^2 |\vt|^{\g-1} e^{-\frac{\vt^2}{8\s^2}} \phantom{\int_{|t|>|\vt|/(2\s)} e^{-\frac{t^2}2}} \\
		&\le C \s^2 |\vt|^{\g-1} e^{-\frac{\vt^2}{8\s^2}}. 
	\end{align*}
	Combining the above inequalities yields the desired bound for the bias. The bound on the variance follows immediately since	\begin{align*}
		\var (|X|^\g) &= \esp(|X|^{2\g}) - \big(\esp |X|^\g\big)^2 
		= b_{2\g} + |\vt|^{2\g} - \big[ b_\g + |\vt|^\g \big]^2 
		\le  b_{2\g}. 
	\end{align*}
	
\end{proof}

\begin{lemma}\label{lemma_hermite2}
	Let $\vt\in \RR$ and $X\sim \calN(\vt,1)$. For any $k\in \mathbb N$, the $k$-th Hermite polynomial satisfies
	\begin{align*}
				\esp H_k(X) &= \vt^k, \\ 
			\esp H_k^2(X) &\le k^k(1+\vt^2/k)^k. 
	\end{align*}
\end{lemma}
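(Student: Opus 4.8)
The plan is to prove the two statements in turn, using the first identity inside the proof of the second.

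\textbf{First identity.} I would establish $\esp H_k(X)=\vt^k$ directly from the Rodrigues-type definition of $H_k$. Writing $X=\vt+Z$ with $Z\sim\calN(0,1)$ and using $H_k(x)e^{-x^2/2}=(-1)^k\frac{d^k}{dx^k}e^{-x^2/2}$ together with $e^{-(x-\vt)^2/2}=e^{\vt x-\vt^2/2}e^{-x^2/2}$, one obtains
\[
\esp H_k(X)=\frac{e^{-\vt^2/2}}{\sqrt{2\pi}}\int_\RR(-1)^k\Big(\frac{d^k}{dx^k}e^{-x^2/2}\Big)e^{\vt x}\,dx.
\]
Integrating by parts $k$ times (all boundary terms vanish since every derivative of $e^{-x^2/2}$ decays at $\pm\infty$) transfers the derivatives onto $e^{\vt x}$, producing a factor $\vt^k$, and the leftover Gaussian integral equals $\sqrt{2\pi}\,e^{\vt^2/2}$, cancelling the prefactor. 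Equivalently, the same identity drops out of the generating-function relation $\sum_{k}\frac{t^k}{k!}H_k(x)=e^{tx-t^2/2}$ by taking expectations and matching powers of $t$.

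\textbf{Second bound.} I would start from the classical Hermite linearization (product) formula
\[
H_k(x)^2=\sum_{j=0}^{k}\binom{k}{j}^2 j!\,H_{2k-2j}(x),
\]
take expectations, and apply the first identity to get $\esp H_{2k-2j}(X)=\vt^{2(k-j)}$, so that, after reindexing $i=k-j$,
\[
\esp H_k^2(X)=\sum_{j=0}^{k}\binom{k}{j}^2 j!\,\vt^{2(k-j)}=\sum_{i=0}^{k}\binom{k}{i}^2 (k-i)!\,\vt^{2i}.
\]
On the other hand, the binomial theorem gives $k^k(1+\vt^2/k)^k=\sum_{i=0}^{k}\binom{k}{i}k^{k-i}\vt^{2i}$. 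Thus it suffices to verify, coefficient by coefficient, that $\binom{k}{i}^2(k-i)!\le\binom{k}{i}k^{k-i}$, i.e. $\binom{k}{i}(k-i)!=k!/i!\le k^{k-i}$; this is immediate because $k!/i!=(i+1)(i+2)\cdots k$ is a product of $k-i$ integers each at most $k$. Summing over $i$ yields $\esp H_k^2(X)\le k^k(1+\vt^2/k)^k$.

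\textbf{Main obstacle.} The computation is essentially bookkeeping, so the only delicate points are invoking the Hermite linearization formula and justifying the term-by-term comparison with the binomial expansion. If one prefers not to quote the linearization formula, it can be re-derived by squaring the generating function and reading off the coefficient of $s^kt^k/(k!)^2$ in $\esp\,e^{(s+t)X-(s^2+t^2)/2}=e^{(s+t)\vt+st}$, which reproduces exactly the expression $\sum_j\binom{k}{j}^2 j!\,\vt^{2(k-j)}$; that route, however, requires a short domination argument to interchange summation and expectation, which is why the direct integration-by-parts proof of the first identity is the cleaner entry point.
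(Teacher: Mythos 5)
Your proof is correct. Note that the paper itself does not prove this lemma; it simply refers to Cai and Low (2011). Both halves of your argument check out: the integration-by-parts (or generating-function) derivation of $\esp H_k(X)=\vt^k$ is sound, the linearization formula $H_k^2=\sum_j\binom{k}{j}^2j!\,H_{2k-2j}$ is the correct one for the probabilists' Hermite polynomials used here, and the coefficientwise comparison $k!/i!\le k^{k-i}$ against the binomial expansion of $k^k(1+\vt^2/k)^k$ closes the bound (with equality of the leading and constant terms at $i=k$ and near-equality at $i=k-1$, so the bound is tight in order). The one stylistic remark: the standard route (and, in substance, the one in Cai--Low) reaches the same closed form $\esp H_k^2(X)=\sum_{j=0}^k\binom{k}{j}^2 j!\,\vt^{2(k-j)}$ via the translation identity $H_k(\vt+Z)=\sum_j\binom{k}{j}\vt^{k-j}H_j(Z)$ combined with the orthogonality $\esp[H_j(Z)H_{j'}(Z)]=j!\,\fcar_{j=j'}$ for $Z\sim\calN(0,1)$; this simultaneously yields the first identity (only the $j=0$ term survives in expectation) and avoids quoting the product-linearization formula, so you may find it a slightly more economical entry point. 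Either way, the finishing comparison is identical and your argument is complete.
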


	The proof of this lemma can be found in~\cite{CaiLow2011}.

\begin{lemma}\label{lemma_CaiLow1}
	Let $\hat{P}_{\g,K,M}$ be defined in~(\ref{parametres}) with parameters $K=K_l$ and $M=M_l$ for some $l\in\{0,\ldots,L\}$ and small enough $c>0$. If $X\sim\calN(0,\s^2)$, then
	\begin{equation*}
	\esp \hat{P}_{\g,K,M}^2(X) \le C \s^{2\g}\frac{6^{2K}}{(M/\s)^{4-2\g}},
	\end{equation*}
	where $C>0$ is a constant depending only on $\g$.
\end{lemma}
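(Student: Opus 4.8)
The plan is to compute $\esp[\hat P_{\g,K,M}^2(X)]$ exactly via the orthogonality of the Hermite polynomials and then to control the monomial coefficients $a_{\g,2k}$ of $P_{\g,K}$ through its Chebyshev expansion. Since $X/\s\sim\calN(0,1)$, orthogonality gives $\esp[H_{2j}(X/\s)H_{2k}(X/\s)]=(2k)!\,\fcar_{j=k}$, so from the definition of $\hat P_{\g,K,M}$,
\[
\esp[\hat P_{\g,K,M}^2(X)]=\sum_{k=1}^{K}\s^{4k}a_{\g,2k}^2 M^{2\g-4k}(2k)!=\s^{2\g}\Big(\frac{\s}{M}\Big)^{4-2\g}\sum_{k=1}^{K}a_{\g,2k}^2\,(2k)!\,\Big(\frac{\s}{M}\Big)^{4k-4}.
\]

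The decisive point is that the block parameters are tuned so that $(M_l/\s)^2=(8/c)K$ (with $K=K_l$), i.e. $(\s/M)^2=c/(8K)$. Hence, for $c$ small, the ratio of the $(k+1)$th to the $k$th term of the sequence $k\mapsto(2k)!\,(\s/M)^{4k-4}$ equals $(2k+1)(2k+2)(\s/M)^4\le 4K^2(c/(8K))^2=c^2/16<1$ for $1\le k\le K-1$, so that sequence is non-increasing on $\{1,\dots,K\}$ and bounded by its value $2$ at $k=1$. Therefore
\[
\esp[\hat P_{\g,K,M}^2(X)]\le 2\,\s^{2\g}\Big(\frac{\s}{M}\Big)^{4-2\g}\sum_{k=1}^{K}a_{\g,2k}^2\le 2\,\s^{2\g}\Big(\frac{\s}{M}\Big)^{4-2\g}\Big(\sum_{k=1}^{K}|a_{\g,2k}|\Big)^2,
\]
and it only remains to show $\sum_k|a_{\g,2k}|\le C\,6^{K}$.

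For this, the zero polynomial already approximates $|x|^\g$ on $[-1,1]$ with error $1$, hence $\|P_{\g,K}\|_{L^\infty[-1,1]}\le 2$; writing $P_{\g,K}=\sum_{m=0}^{K}\beta_m T_{2m}$ in the Chebyshev basis, the coefficients satisfy $|\beta_m|\le 2\|P_{\g,K}\|_{L^\infty[-1,1]}\le 4$. Since the nonzero monomial coefficients of $T_{2m}$ strictly alternate in sign, $\sum_k|[x^{2k}]T_{2m}|=|T_{2m}(\ii)|\le(1+\sqrt2)^{2m}$, whence
\[
\sum_k|a_{\g,2k}|\le\sum_{m=0}^K|\beta_m|\,|T_{2m}(\ii)|\le 4\sum_{m=0}^K(1+\sqrt2)^{2m}\le C\,(1+\sqrt2)^{2K}\le C\,6^{K},
\]
using $(1+\sqrt2)^2=3+2\sqrt2<6$. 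Combining the last three displays gives $\esp[\hat P_{\g,K,M}^2(X)]\le C\s^{2\g}(\s/M)^{4-2\g}6^{2K}=C\s^{2\g}6^{2K}/(M/\s)^{4-2\g}$, the asserted bound (with a numerical constant $C$); the case $K=0$ is trivial since then $\hat P_{\g,K,M}\equiv 0$.

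The step I expect to be the main obstacle is the "decisive point": the factorials $(2k)!$ in the exact second moment grow super-exponentially in $k$, so a bound on the coefficients alone is useless, and one must exploit that the block is built with $(M_l/\s)^2$ equal to a large multiple (because $c$ is small) of $K_l$ — this both forces $(\s/M)^{4k-4}$ to decay fast enough to overwhelm the factorials and leaves only a contribution of the order of the $k=1$ term, which is exactly what produces the factor $(M/\s)^{-(4-2\g)}$. An equivalent route, bypassing the Chebyshev coefficient estimate, uses the identity $\s^{2k}H_{2k}(u/\s)=\esp_{Z'}[(u+\ii Z')^{2k}]$ with $Z'\sim\calN(0,\s^2)$ to write $\hat P_{\g,K,M}(u)=M^\g\,\esp_{Z'}[(P_{\g,K}-P_{\g,K}(0))((u+\ii Z')/M)]$, then bounds the polynomial at this complex argument by the Bernstein--Walsh lemma together with $\|P_{\g,K}\|_{L^\infty[-1,1]}\le 2$, and finishes with the moment generating function of a $\chi_2$ random variable; the smallness of $c$ reappears there as the inequality $e^{O(cK)}\le 6^{2K}$.
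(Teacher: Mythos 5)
Your proof is correct, and its skeleton coincides with the paper's: expand $\esp \hat{P}_{\g,K,M}^2(X)$ via Hermite orthogonality, bound the monomial coefficients of $P_{\g,K}$ by $C\,6^K$, and use the tuning $(\s/M)^2=c/(8K)$ to make the factorial (or $(2k)^{2k}$) growth harmless, with only the $k=1$ term surviving to produce the factor $(\s/M)^{4-2\g}$. You differ in two sub-steps, both legitimately. First, to tame $\sum_k a_{\g,2k}^2\,(2k)!\,(\s/M)^{4k-4}$ you observe that the sequence $(2k)!(\s/M)^{4k-4}$ is non-increasing (consecutive ratio $\le c^2/16$), whereas the paper bounds $\esp H_{2k}^2$ by $(2k)^{2k}$ (Lemma~\ref{lemma_hermite2}) and splits the resulting sum into three ranges as in \eqref{splitt}; your monotonicity argument is cleaner and gives the same conclusion. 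Second, and more substantively, the paper's coefficient bound (Lemma~\ref{lemma_coefficientsgamma}) rests on Proposition~\ref{lemma_Szego}, i.e.\ on Szeg\"o's asymptotics for the minimal eigenvalue of a lacunary Hilbert matrix, while you derive the (slightly stronger) bound $\sum_k|a_{\g,2k}|\le C\,6^K$ elementarily from $\|P_{\g,K}\|_{L^\infty[-1,1]}\le 2$, the Chebyshev coefficient bound $|\beta_m|\le 2\|P_{\g,K}\|_\infty$, and the sign-alternation of the monomial coefficients of $T_{2m}$ giving $\sum_j|[x^{2j}]T_{2m}|=|T_{2m}(\ii)|\le(1+\sqrt2)^{2m}$, with $(1+\sqrt2)^2<6$. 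What the paper's route buys is a general statement (Proposition~\ref{lemma_Szego}) reusable elsewhere; what yours buys is a self-contained, elementary derivation that avoids citing Szeg\"o's theorem and controls the $\ell_1$-norm of the coefficient vector directly, which is exactly what your Cauchy--Schwarz step needs.
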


\begin{proof}
	Recall that, for the Hermite polynomials, $\esp (H_k(\xi)H_j(\xi))=0$ if $k\ne j$ and $\xi \sim\calN(0,1)$.
	Using this fact and then Lemmas~\ref{lemma_coefficientsgamma} and~\ref{lemma_hermite2} we obtain
	\begin{align*}
		\esp \hat{P}_{\g,K,M}^2(X) &= M^{2\g} \sum_{k=1}^K a^2_{\g,2k} (\s/M)^{4k} \esp H^2_{2k}(X/\s) \\
		&\le C 6^{2K} M^{2\g} \sum_{k=1}^K (2k)^{2k} (\s/M)^{4k}.
	\end{align*}
	Moreover, since $\s^2/M^2 = c/(8K)$ we have
	\begin{align}\label{splitt}
		\sum_{k=1}^K (2k)^{2k} (\s/M)^{4k} &\le \frac{4\s^4}{M^4} + \sum_{2\le k\le \log(M/\s)} (\s/M)^{4k} \big(2\log(M/\s)\big)^{2k}\\ 
		& \quad + \sum_{\log(M/\s)<k \le K} (c/4)^{2k} \le \frac{C\s^4}{M^4}\nonumber
	\end{align}
	if $c$ is small enough. The result follows.
\end{proof}

\begin{lemma}\label{lemma_CaiLow2}
	Let $\hat{P}_{\g,K,M}$ be defined in~(\ref{parametres}) with parameters $K=K_l$ and $M=M_l$ for some $l\in\{0,\ldots,L\}$ and small enough $c>0$. If $X\sim\calN(\vt,\s^2)$ with $|\vt|\le M$, then
	\begin{align*}
		 &\big| \esp \hat{P}_{\g,K,M}(X) - |\vt|^\g  \big| \le C \Big(\frac{M}{K}\Big)^\g, \\
		 &\esp \hat{P}^2_{\g,K,M}(X) \le C M^{2\g} 2^{8K},
			\end{align*}
	where $C>0$ is a constant  depending only on $\g$.
\end{lemma}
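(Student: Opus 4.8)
The plan is to exploit the explicit expansion $\hat P_{\g,K,M}(u)=\sum_{k=1}^K \s^{2k}a_{\g,2k}M^{\g-2k}H_{2k}(u/\s)$ together with the unbiasedness property of Hermite polynomials from Lemma~\ref{lemma_hermite2}, namely $\esp H_k(X/\s)=(\vt/\s)^k$ when $X\sim\calN(\vt,\s^2)$. Taking expectations term by term gives
\[
\esp \hat P_{\g,K,M}(X)=\sum_{k=1}^K a_{\g,2k}M^{\g-2k}\vt^{2k}
= P_{\g,K}(\vt/M)\,M^\g - a_{\g,0}M^\g,
\]
so that, recalling $P_{\g,K}(x)=\sum_{k=0}^K a_{\g,2k}x^{2k}$ is the best degree-$2K$ polynomial approximation of $|x|^\g$ on $[-1,1]$ and that $|\vt|\le M$ implies $\vt/M\in[-1,1]$,
\[
\big|\esp \hat P_{\g,K,M}(X)-|\vt|^\g\big|
= M^\g\big|P_{\g,K}(\vt/M)-|\vt/M|^\g - a_{\g,0}\big|.
\]
The first bound then follows from Lemma~\ref{approx} (the quantitative statement on best polynomial approximation of $|x|^\g$, giving an error of order $K^{-\g}$ on $[-1,1]$), provided one also controls $|a_{\g,0}|$; the constant term $a_{\g,0}=P_{\g,K}(0)$ is within $K^{-\g}$ of $|0|^\g=0$ by the same approximation bound, so $|a_{\g,0}|\le CK^{-\g}$ and altogether the bias is at most $C M^\g K^{-\g}=C(M/K)^\g$.

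For the second bound I would write $X=\vt+\s\eta$ with $\eta\sim\calN(0,1)$ and use the translation identity for Hermite polynomials together with $|\vt|\le M$; more directly, I expect to bound $\esp\hat P^2_{\g,K,M}(X)$ by first observing that $\hat P_{\g,K,M}(u)$, evaluated at $u=X$, is a polynomial in $X/\s$ of degree $2K$ whose $\ldeux(\calN(\vt,\s^2))$-norm can be estimated via the coefficient bounds of Lemma~\ref{lemma_coefficientsgamma} (controlling $a_{\g,2k}$ by something like $6^{2K}$ in a suitable normalized sense) and the moment bound $\esp H_{2k}^2(X/\s)\le (2k)^{2k}(1+\vt^2/(2k\s^2))^{2k}$ from Lemma~\ref{lemma_hermite2}. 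Since $|\vt|\le M$ and $\s^2/M^2=c/(8K)$, the factor $(1+\vt^2/(2k\s^2))^{2k}$ is at most $(1+ M^2/(2k\s^2))^{2k}\le e^{M^2/\s^2}\cdot(\text{polynomial})$, which is $\le 2^{O(K)}$; combining with $(2k)^{2k}(\s/M)^{4k}$ as in the summation \eqref{splitt} of Lemma~\ref{lemma_CaiLow1}, one gets $\esp\hat P^2_{\g,K,M}(X)\le C M^{2\g}2^{8K}$, where the explicit constant $8$ in the exponent is simply what comes out of bounding the various $6^{2K}$, $e^{M^2/\s^2}$ and $(2k)^{2k}(\s/M)^{4k}$ factors crudely (it is not optimized).

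The main obstacle is the second bound: unlike the $X\sim\calN(0,\s^2)$ case of Lemma~\ref{lemma_CaiLow1}, where the orthogonality $\esp(H_k H_j)=0$ collapses the square to a diagonal sum, here the mean $\vt$ is nonzero, so cross terms appear and the moments $\esp H_{2k}^2(X/\s)$ grow like $(1+\vt^2/(2k\s^2))^{2k}$; the point $|\vt|\le M$ with $M/\s\asymp\sqrt{K/c}$ makes $\vt^2/\s^2$ as large as $\Theta(K)$, so $\esp H_{2k}^2$ can be as large as $e^{\Theta(K)}$, and one must check that this exponential growth, once multiplied by the coefficient bounds and $(\s/M)^{4k}$, is still absorbed into $M^{2\g}2^{8K}$ — i.e. that the constant $c$ has been chosen small enough that no term in the sum $\sum_{k=1}^K a^2_{\g,2k}(\s/M)^{4k}\esp H_{2k}^2(X/\s)$ exceeds a harmless multiple of $M^{2\g-?}2^{8K}$. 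I expect the bookkeeping here to mirror the splitting in \eqref{splitt} but with the extra $(1+\vt^2/(2k\s^2))^{2k}$ factor carried along, and the verification that small $c$ suffices to be the only delicate point.
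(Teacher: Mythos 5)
Your treatment of the first bound is correct and coincides with the paper's: by Lemma~\ref{lemma_hermite2}, $\esp \hat P_{\g,K,M}(X)=\sum_{k=1}^K a_{\g,2k}M^{\g-2k}\vt^{2k}=M^\g\big(P_{\g,K}(\vt/M)-a_{\g,0}\big)$, and both $\max_{[-1,1]}\big|P_{\g,K}-|\cdot|^\g\big|$ and $|a_{\g,0}|=|P_{\g,K}(0)|$ are bounded by $\delta_{K,\g}\le CK^{-\g}$ via Lemma~\ref{approx}; your explicit control of the missing constant term $a_{\g,0}$ is a detail the paper leaves implicit.

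The second bound, however, has a genuine gap at exactly the spot you flag as delicate, and the resolution is not the one you anticipate. You propose to bound $(1+\vt^2/(2k\s^2))^{2k}\le e^{M^2/\s^2}$ and then to treat $(2k)^{2k}(\s/M)^{4k}$ separately as in \eqref{splitt}. But $M^2/\s^2=8K/c$, so $e^{M^2/\s^2}=2^{8K/(c\log 2)}$: the exponent is proportional to $K/c$ and \emph{grows} as $c$ decreases. Since the lemma is asserted for small $c$ (and other parts of the proof of Theorem~\ref{theorem_upperbound1} force, e.g., $2c\log6\le1/8$), this route yields at best $\esp\hat P^2_{\g,K,M}(X)\le CM^{2\g}2^{C(c)K}$ with $C(c)\asymp1/c$, not $2^{8K}$. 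This is not a cosmetic loss of constants: in item $4.1^{\circ}$ of the proof of Theorem~\ref{theorem_upperbound1}, the factor $2^{8K_L}$ becomes $d^{O(c)}$ and is killed by choosing $c$ small, whereas $2^{C(c)K_L}$ with $C(c)\asymp 1/c$ becomes $d^{O(1)}$ and the argument collapses; note that "choosing $c$ smaller" makes your bound worse, not better. The correct step is to keep the moment factor and the coefficient normalization together for each $k$ so that they cancel multiplicatively:
\[
\s^{2k}(2k)^k\Big(1+\frac{\vt^2}{2k\s^2}\Big)^k=(2k\s^2+\vt^2)^k\le(2K\s^2+M^2)^k=\Big(\frac{c}{4}+1\Big)^kM^{2k}\le(2M^2)^k .
\]
Combined with the $\ldeux$ triangle inequality $\esp\hat P^2_{\g,K,M}(X)\le M^{2\g}\big(\sum_{k=1}^K\s^{2k}|a_{\g,2k}|M^{-2k}\sqrt{\esp H^2_{2k}(X/\s)}\big)^2$ (which is how the cross terms you worry about are handled) and $|a_{\g,2k}|\le C6^K$, the sum is at most $C6^K2^{K+1}$, and squaring gives $C\cdot144^K\le CM^{-2\g}\cdot M^{2\g}2^{8K}$ with the exponent $8$ an absolute constant independent of $c$.
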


\begin{proof}
	To prove the first inequality of the lemma, it is enough to note that, due to Lemma~\ref{lemma_hermite2}, 
	\begin{equation}\label{biass}
		\esp \hat{P}_{\g,K,M}(X) = \sum_{k=1}^K a_{\g,2k} M^{\g-2k} \vt^{2k}
	\end{equation}
	and to apply Lemma~\ref{approx}.
		For the second inequality, we use the bound
	\begin{equation}\label{secmom}
		\esp \hat{P}^2_{\g,K,M}(X) \le M^{2\g} \bigg(\sum_{k=1}^K  \s^{2k} |a_{\g,2k}| M^{-2k} \sqrt{\esp H_{2k}^2(X/\s)} \bigg)^2.
	\end{equation}
	Thus Lemmas~\ref{lemma_coefficientsgamma} and~\ref{lemma_hermite2} together with the relations  
	$|\vt|\le M$ and $K=(c/8)M^2/\s^2$ imply that, for small enough $c>0$,
	\begin{align*}
		\esp \hat{P}^2_{\g,K,M}(X) \le CM^{2\g} 6^{2K} \bigg(\sum_{k=1}^K M^{-2k} (2M^2)^k \bigg)^2 \le C M^{2\g} 2^{8K}.
	\end{align*}
\end{proof}

\begin{lemma}\label{lemma_CaiLowamlior}
	Let $\hat{P}_{\g,K,M}$ be defined in~(\ref{parametres}) with parameters $K=K_l$ and $M=M_l$ for some $l\in\{0,\ldots,L\}$ and small enough $c>0$. If $X\sim\calN(\vt,\s^2)$ with $|\vt|>2\s t_l$, then
	\begin{align*}
		& \big| \esp \hat{P}_{\g,K,M}(X) \big| \le C\s^\g 6^{K} K^{1+\g/2} e^{c\vt^2/(8\s^2)}, \\
			& \esp \hat{P}^2_{\g,K,M}(X) \le C\s^{2\g} (\s/M)^{4-2\g} 6^{2K} e^{c\log(1+4/c)\vt^2/(4\s^2)},
	\end{align*}
	where $C>0$ is a constant  depending only on $\g$.
\end{lemma}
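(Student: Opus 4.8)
The plan is to establish the two inequalities separately; in each case I would start from the expansion $\hat{P}_{\g,K,M}(u)=\sum_{k=1}^K\s^{2k}a_{\g,2k}M^{\g-2k}H_{2k}(u/\s)$, evaluate the Gaussian moments through Lemma~\ref{lemma_hermite2}, and then combine the coefficient bound of Lemma~\ref{lemma_coefficientsgamma} (which contributes a factor of order $6^K$) with two elementary consequences of the parametrization~\eqref{parametres}: the identity $\s^2/M^2=c/(8K)$ — equivalently $M^\g=C\s^\g K^{\g/2}$, where throughout $C$ denotes a constant depending only on $\g$ — and the fact that the hypothesis $|\vt|>2\s t_l$ is \emph{exactly} $|\vt|>M_l=M$, i.e. $\vt^2/(2\s^2)>4K/c$. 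This last point is the engine of the proof: it means $P_{\g,K}$ is evaluated strictly outside $[-1,1]$, so every power series in $|\vt|/M$ that appears is dominated by its top-degree term.

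\emph{First inequality.} By Lemma~\ref{lemma_hermite2}, $\esp H_{2k}(X/\s)=(\vt/\s)^{2k}$, so that $\esp\hat{P}_{\g,K,M}(X)=M^\g\sum_{k=1}^{K}a_{\g,2k}(\vt/M)^{2k}$. Inserting the coefficient bound and using $(\vt/M)^2>1$, the inner sum is at most $K$ times its $k=K$ term, hence at most $C\,6^K K(\vt^2/M^2)^K$. Writing $\vt^2/M^2=a/K$ with $a:=c\vt^2/(8\s^2)$, the elementary bound $(a/K)^K=a^K/K^K\le a^K/K!\le e^{a}$ converts this into $C\,6^K K\,e^{c\vt^2/(8\s^2)}$; multiplying by $M^\g=C\s^\g K^{\g/2}$ gives the claimed bound, the factor $K^{1+\g/2}$ arising from the $K$ in ``sum $\le K\times$ last term''.

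\emph{Second inequality.} Since the Hermite polynomials are no longer orthogonal under $\calN(\vt,\s^2)$, I would pass to
\[
\esp\hat{P}^2_{\g,K,M}(X)\le M^{2\g}\Big(\sum_{k=1}^{K}\s^{2k}M^{-2k}|a_{\g,2k}|\big(\esp H_{2k}^2(X/\s)\big)^{1/2}\Big)^{2}
\]
as in~\eqref{secmom}, and bound $\big(\esp H_{2k}^2(X/\s)\big)^{1/2}\le(2k)^k\big(1+\vt^2/(2k\s^2)\big)^{k}$ by Lemma~\ref{lemma_hermite2}. The decisive step is to \emph{not} use the crude $\big(1+\vt^2/(2k\s^2)\big)^{k}\le e^{\vt^2/(2\s^2)}$ but rather $1+\vt^2/(2k\s^2)\le\vt^2/(k\s^2)$, legitimate because $\vt^2/(2k\s^2)>4K/(ck)\ge1$. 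Together with $\s^{2k}M^{-2k}(2k)^k=(ck/(4K))^k$ the $k$-dependence then collapses: the $k$-th summand is at most $C\,6^K\rho^k$ with $\rho:=c\vt^2/(4K\s^2)$, which exceeds $2$ since $\vt^2/\s^2>8K/c$. Thus $\sum_{k}\rho^k\le 2\rho^K$, and since $K\rho=c\vt^2/(4\s^2)=:\beta$ one has $\rho^K=(\beta/K)^K\le\beta^K/K!\le e^{\beta}=e^{c\vt^2/(4\s^2)}$. Squaring yields $\esp\hat{P}^2_{\g,K,M}(X)\le C\,6^{2K}M^{2\g}e^{c\vt^2/(2\s^2)}$; finally I would substitute $M^{2\g}=(8K/c)^{2}\,\s^{2\g}(\s/M)^{4-2\g}$ and absorb the polynomial prefactor $(8K/c)^2$ into the slack between $e^{c\vt^2/(2\s^2)}=e^{2c\,\vt^2/(4\s^2)}$ and the target $e^{c\log(1+4/c)\vt^2/(4\s^2)}$.

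The main obstacle is this last absorption step. For $c$ small enough one has $\log(1+4/c)>2$, so the gap $e^{c(\log(1+4/c)-2)\vt^2/(4\s^2)}$ is genuine; moreover $\vt^2/(4\s^2)\ge 2K/c$ under the hypothesis, so this gap is at least $\big((1+4/c)/e^{2}\big)^{2K}$, which for small $c$ dominates $(8K/c)^2$ uniformly over $K\ge1$ once $C$ is taken large enough (still depending only on $\g$). The delicacy is in choosing how coarsely to bound $\big(1+\vt^2/(2k\s^2)\big)^{k}$: coarse enough that the series becomes a clean geometric sum, yet sharp enough that the exponential constant stays strictly below $\log(1+4/c)$. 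This trade-off is precisely what forces $c$ to be small and pins down the constants $\tfrac18$, $\tfrac14$ and $\log(1+4/c)$ appearing in the statement.
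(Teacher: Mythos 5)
Your proof is correct. For the first inequality it is essentially the paper's argument: both reduce, via $\esp H_{2k}(X/\s)=(\vt/\s)^{2k}$ and the coefficient bound, to $|\esp \hat{P}_{\g,K,M}(X)|\le C M^\g K 6^K(\vt^2/M^2)^K$, and your step $(\vt^2/M^2)^K=(a/K)^K\le a^K/K!\le e^{a}$ with $a=c\vt^2/(8\s^2)$ is the same estimate the paper phrases as $K\log(\vt^2/M^2)\le c\vt^2/(8\s^2)$. For the second inequality you start from the same display \eqref{secmom} and the same Hermite moment bound, but close the estimate differently. The paper keeps two ingredients separate: it bounds $\big(1+\vt^2/(2\s^2k)\big)^k\le e^{c\log(1+4/c)\vt^2/(8\s^2)}$ uniformly in $k$ using the monotonicity of $x\mapsto x^{-1}\log(1+x)$ for $x\ge 2$, and then bounds the residual sum $\sum_k(\s/M)^{2k}(2k)^k\le C\s^2/M^2$ as in \eqref{splitt}; this second step is precisely where the factor $(\s/M)^{4}$ (hence $(\s/M)^{4-2\g}$ after pulling out $M^{2\g}$) comes from, and the exponent $c\log(1+4/c)\vt^2/(4\s^2)$ is obtained exactly. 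You instead coarsen $1+\vt^2/(2k\s^2)\le \vt^2/(k\s^2)$, collapse the whole sum into a geometric series with ratio $\rho=c\vt^2/(4K\s^2)\ge 2$, and land on $CM^{2\g}6^{2K}e^{c\vt^2/(2\s^2)}$, which exceeds the stated bound by the polynomial factor $(M/\s)^4=(8K/c)^2$ and falls short of it in the exponent by $e^{c(\log(1+4/c)-2)\vt^2/(4\s^2)}$. Your absorption of the former into the latter is legitimate: since $\vt^2/(4\s^2)\ge 2K/c$, the exponential slack is at least $\big((1+4/c)e^{-2}\big)^{2K}$, which for $c$ small enough dominates $(8K/c)^2$ uniformly in $K\ge1$ up to a constant depending only on $c$ (hence on $\g$). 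What the paper's route buys is a proof that works for any admissible $c$ and yields the stated constants directly; what your route buys is a more elementary closing (a geometric sum plus the series bound for $e^x$) at the price of an extra smallness condition on $c$ ($\log(1+4/c)>2$), which is harmless here since $c$ is already taken very small elsewhere in the proof of Theorem~\ref{theorem_upperbound1}.
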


\begin{proof}
	To prove the first inequality of the lemma, we use \eqref{biass} and Lemma~\ref{lemma_coefficientsgamma} to obtain
	\begin{align*}
		\big| \esp \hat{P}_{\g,K,M}(X) \big| \le C M^\g K 6^{K} \Big(\frac{\vt^2}{M^2}\Big)^K.
	\end{align*}
	Recall that $M^2=8\s^2K/c$ and $|\vt|>M$ by assumption of the lemma. Thus,
	\begin{align*}
		M^\g K 6^{K} \Big(\frac{\vt^2}{M^2}\Big)^K \le C \s^\g K^{1+\g/2} 6^{K} e^{K\log(\vt^2/M^2)}
	\end{align*}
	and the result follows since $K\log(\vt^2/M^2)= cM^2/8 \s^2\log(\vt^2/M^2) \le c\vt^2/8\s^2$. 
	
	We now prove the  second  inequality of the lemma.  Using \eqref{secmom} and then Lemmas~\ref{lemma_coefficientsgamma} and~\ref{lemma_hermite2} we get
	\begin{align*}
		\esp \hat{P}^2_{\g,K,M}(X) 
		&\le C M^{2\g} 6^{2K} \bigg( \sum_{k=1}^K (\s/M)^{2k} (2k)^k \Big(1+\frac{\vt^2}{2\s^2k}\Big)^k \bigg)^2.
	\end{align*}
As $M^2=8\s^2K/c$ and $|\vt|>M$, we have
$$
\frac{\vt^2}{2\s^2k} \ge \frac{M^2}{2\s^2K} = \frac4{c}\ge 2 
$$	
for $c>0$ small enough. Using this remark and the fact that the function $x\to x^{-1}\log(1+x)$ is decreasing for $x\ge2$ we obtain
$$
k \log\Big(1+\frac{\vt^2}{2\s^2k}\Big)\le \frac{c\log(1+4/c)\vt^2}{8\s^2}.
$$
Therefore,
	\begin{align*}
		\esp \hat{P}^2_{\g,K,M}(X) &\le CM^{2\g} 6^{2K} e^{c\log(1+4/c)\vt^2/(4\s^2)}  \bigg( \sum_{k=1}^K (\s/M)^{2k} (2k)^k \bigg)^2.
	\end{align*}
	Finally, the result follows by noticing that, by an argument analogous to \eqref{splitt}, we have 
	$$
	\sum_{k=1}^K (\s/M)^{2k} (2k)^k \le \frac{C\s^2}{M^2}.
	$$
\end{proof}


\section{Some facts from approximation theory}\label{sec:approximation}

We start with a proposition relating moment matching to best polynomial  approximation. 
It is similar to several results used in the theory of  estimation of non-smooth functionals starting from \citet{LepskiNemirovskiSpokoiny1999}. There exist different techniques of proving such results for specific examples. Thus, the proof in \cite{LepskiNemirovskiSpokoiny1999} is based on Riesz representation of linear operators, while  \citet{WuYang2016} provide an  explicit construction using Lagrange interpolation.  Here, for completeness we give a short proof for a relatively general setting based on optimization arguments. 

 Let $f: [-1,1]\to \RR$ be a continuous even function. Consider the accuracy of best polynomial  approximation of $f$:
\begin{equation*}\label{deltak}
\d_{K}(f)= \inf_{G\in \mathcal{P}_K} \max_{x\in [-1,1]} \big| f(x)-G(x)\big|
\end{equation*}
where $\mathcal{P}_K$ is the class of all real polynomials of degree at most $K$. 
       
  \begin{proposition}\label{mesuresf} 
Let $f: [-1,1]\to \RR$ be a continuous even function.
For any even  integer $K\ge 1$, there exist two probability measures $\tilde{\mu}_0$ and $\tilde{\mu}_1$ on $[-1,1]$ such that 
\begin{itemize}
\item[(i)] $\tilde{\mu}_0$ and $\tilde{\mu}_1$ are symmetric about 0;
\item[(ii)]  $\int t^l \tilde{\mu}_0(dt)=\int t^l \tilde{\mu}_1(dt)$ for $l=0,1,\ldots, K$;
\item[(iii)]  $\int f(t) \tilde{\mu}_1(dt)-\int f(t) \tilde{\mu}_0(dt)=2 \d_{K}(f)$.
\end{itemize}
\end{proposition}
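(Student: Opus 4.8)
The plan is to obtain the two measures as the optimal primal and dual solutions of a linear program (a semi-infinite one, since the feasible set lives in the space of measures on $[-1,1]$). First I would recast best polynomial approximation as a min-max: since $\delta_K(f)$ is the distance from $f$ to $\mathcal{P}_K$ in the sup-norm on $[-1,1]$, by duality of $C[-1,1]$ and its dual $\mathcal{M}[-1,1]$ (signed Radon measures), one has
\begin{equation*}
\delta_K(f)=\sup\Big\{\textstyle\int f\,d\nu \ :\ \nu\in\mathcal{M}[-1,1],\ \|\nu\|_{TV}\le 1,\ \int t^l\,d\nu=0 \text{ for } l=0,1,\dots,K\Big\},
\end{equation*}
i.e. we maximize the linear functional $\nu\mapsto\int f\,d\nu$ over the annihilator of $\mathcal{P}_K$ intersected with the unit ball of total variation. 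Existence of a maximizer $\nu^*$ follows from weak-$*$ compactness of the TV-ball and weak-$*$ continuity of the constraints and objective (all test functions $1,t,\dots,t^K,f$ are continuous). This is the standard Chebyshev/Markov duality and I would cite or reprove it in a line.

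Next I would symmetrize. Because $f$ is even and the constraint set is invariant under $t\mapsto -t$ (the moment constraints $\int t^l\,d\nu=0$ are preserved, and $\|\cdot\|_{TV}$ is preserved), the pushforward of $\nu^*$ under $t\mapsto -t$ is also optimal, and hence so is the average $\bar\nu=\tfrac12(\nu^*+\nu^*_-)$, which is symmetric about $0$ and still attains the supremum $\delta_K(f)$. Now perform the Jordan decomposition $\bar\nu=\bar\nu_+-\bar\nu_-$ into its positive and negative parts. Optimality forces $\|\bar\nu\|_{TV}=\bar\nu_+([-1,1])+\bar\nu_-([-1,1])=1$ (if the total mass were $<1$ we could rescale and increase $\int f\,d\bar\nu$, unless $\delta_K(f)=0$, a degenerate case one handles separately by taking $\tilde\mu_0=\tilde\mu_1$ equal to any symmetric $K$-moment-determined measure, e.g. a suitable atomic measure). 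Moreover the constraint $\int 1\,d\bar\nu=0$ gives $\bar\nu_+([-1,1])=\bar\nu_-([-1,1])$, so both equal $\tfrac12$. Setting $\tilde\mu_1=2\bar\nu_+$ and $\tilde\mu_0=2\bar\nu_-$ yields two probability measures. Symmetry of each of $\tilde\mu_0,\tilde\mu_1$ individually follows because $\bar\nu$ is symmetric and the positive/negative parts of a symmetric signed measure are each symmetric. The moment conditions $\int t^l\,d\bar\nu=0$ for $l\le K$ translate exactly into $\int t^l\,d\tilde\mu_1=\int t^l\,d\tilde\mu_0$, which is (ii); (i) is done; and $\int f\,d\tilde\mu_1-\int f\,d\tilde\mu_0=2\int f\,d\bar\nu=2\delta_K(f)$, which is (iii).

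The step I expect to be the main obstacle — or at least the one needing the most care — is justifying that the TV-norm of the optimal $\nu^*$ is exactly $1$ and, relatedly, handling the edge case $\delta_K(f)=0$; everything else is soft functional analysis (weak-$*$ compactness, invariance, Jordan decomposition). A clean way to argue $\|\nu^*\|_{TV}=1$ in the nondegenerate case is: the sup is positive, the map $\nu\mapsto\int f\,d\nu$ is linear, so along the ray $t\nu^*$ with $t\in[0,1/\|\nu^*\|_{TV}]$ the objective is increasing in $t$, forcing $\|\nu^*\|_{TV}=1$ at the maximum. Alternatively, one can invoke the classical characterization (Chebyshev alternation) that the extremal $\nu^*$ for polynomial approximation is a discrete signed measure supported on the $\le K+2$ alternation points with weights of alternating sign and total mass $1$, which makes the construction fully explicit; I would mention this as a remark but carry out the optimization-based argument since it is self-contained and requires no regularity of $f$ beyond continuity.
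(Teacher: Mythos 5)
Your proof is correct, and at bottom it rests on the same duality as the paper's: the identification of $2\delta_K(f)$ with the value of a linear program over moment-matched measures. The packaging differs, though. The paper works directly with pairs of symmetric probability measures, collapses them to a single signed measure of total variation at most $2$ with vanishing moments, and then obtains $\delta_K(f)$ by Lagrangian duality via Sion's minimax theorem (the chain of equalities in \eqref{immed}); you instead invoke the Hahn--Banach characterization of the distance from $f$ to the subspace $\mathcal{P}_K$ as a supremum over the unit ball of the annihilator, and then reconstruct the pair $(\tilde{\mu}_0,\tilde{\mu}_1)$ by hand via symmetrization, Jordan decomposition, and normalization. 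What your route buys is precisely the bookkeeping the paper leaves implicit: the first equality in \eqref{immed} (passing from pairs of probability measures to signed measures in $P_{2}$ and back) is exactly your Jordan-decomposition-plus-normalization step, and you also make explicit the attainment of the supremum (weak-$*$ compactness), the fact that the extremal measure saturates the total-variation constraint, and the degenerate case $\delta_K(f)=0$ — none of which the paper spells out. What the paper's route buys is brevity and a formulation that never leaves the space of probability-measure pairs, at the cost of relying on Sion's theorem and on an unproved (though easy) equivalence in its first line. Both arguments are sound; yours is the more self-contained of the two.
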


\begin{proof}
Denote by $P_{\sf sym}$ the set of all probability measures on $[-1,1]$ that are symmetric about~0, 
 and by $P_{2}$ be the set of all signed measures on $[-1,1]$ with total variation not greater than 2.  
For $K=2m$, we have 
\begin{align}\label{immed}
&\sup_{(\nu_0, \nu_1)\in P_{\sf sym}\times  P_{\sf sym} : \int t^{l}d\nu_0(t)=\int t^{l}d\nu_1(t), \ l =0,\ldots, K} \Big( \int_{-1}^1 f(x)d\nu_0(x)- \int_{-1}^1 f(x)d\nu_1(x)\Big)\\ \nonumber
&\qquad \qquad =\sup_{\mu\in P_{\sf 2}  :  \int t^{2l}d\mu(t)=0, \ l=0,\ldots,m} \ \int_{-1}^1 f(x)d\mu(x)\\ \nonumber
&\qquad \qquad= \sup_{\mu \in P_{\sf 2} } \inf_{\a\in \RR^{m+1}} \int_{-1}^1 \Big(f(x)-\sum_{l=0}^m \a_l x^{2l}\Big)d\mu(x)\\  \nonumber
&\qquad\qquad= \inf_{\a\in \RR^{m+1}}\sup_{\mu \in P_{\sf 2} } \int_{-1}^1 \Big(f(x)-\sum_{l=0}^m \a_l x^{2l}\Big)d\mu(x)\\
 \nonumber
&\qquad\qquad=2 \min_{\a\in \RR^{m+1}}\max_{x \in [-1,1]} \Big| f(x)-\sum_{l=0}^m \a_l x^{2l} \Big| = 2\delta_K(f),
\end{align}
where the third equality follows from Sion's minimax theorem, and the second equality uses the fact that $f$ is an even function, so that the maximum over $\mu\in P_2$ in the second line of~\eqref{immed} is equal to the maximum over symmetric $\mu\in P_2$ satisfying the same moment constraints. Let $(\nu_0^*,\nu_1^*)$ be the pair of probability measures attaining the maximum in the first line of \eqref{immed}. The proposition follows by setting $\tilde{\mu}_i=\nu_i^*$, $i=0,1$. 
\end{proof}
 
As an immediate corollary of Proposition~\ref{mesuresf} for $f(x)=|x|^\g$, we obtain the following result.
  \begin{lemma}\label{mesures} 
For any even  integer $K\ge 1$ and any $M>0$, $\g>0$, there exist two probability measures $\tilde{\mu}_0$ and $\tilde{\mu}_1$ on $[-M,M]$ such that 
\begin{itemize}
\item[(i)] $\tilde{\mu}_0$ and $\tilde{\mu}_1$ are symmetric about 0;
\item[(ii)]  $\int t^l \tilde{\mu}_0(dt)=\int t^l \tilde{\mu}_1(dt)$ for $l=0,1,\ldots, K$;
\item[(iii)]  $\int |t|^\g \tilde{\mu}_1(dt)-\int |t|^\g \tilde{\mu}_0(dt)=2 M^\g\d_{K,\g}$.
\end{itemize}
\end{lemma}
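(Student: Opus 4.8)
The statement is an immediate rescaling of Proposition~\ref{mesuresf} applied to the even function $f(x)=|x|^\g$, so the plan is simply to track how the best polynomial approximation error and the two measures transform under the change of variable $x\mapsto Mx$. First I would record the scaling identity for the approximation error: since $G\mapsto M^\g G(\cdot/M)$ is a bijection of $\mathcal{P}_K$ onto itself, one has
\begin{equation*}
\d_{K,\g}:=\inf_{G\in\mathcal{P}_K}\max_{|t|\le M}\big||t|^\g-G(t)\big| = M^\g \inf_{G\in\mathcal{P}_K}\max_{|x|\le 1}\big||x|^\g-G(x)\big| = M^\g\,\d_K(|\cdot|^\g),
\end{equation*}
which explains the factor $M^\g$ appearing in item (iii). (The notation $\d_{K,\g}$ and $M^\g\d_{K,\g}$ in the statement is understood accordingly; I would state this explicitly to fix conventions.)

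Next I would invoke Proposition~\ref{mesuresf} with $f(x)=|x|^\g$ — which is continuous and even on $[-1,1]$, so the hypotheses are met — to obtain symmetric probability measures $\tilde\mu_0^{(1)},\tilde\mu_1^{(1)}$ on $[-1,1]$ with matching moments up to order $K$ and with $\int|t|^\g\,d\tilde\mu_1^{(1)}-\int|t|^\g\,d\tilde\mu_0^{(1)}=2\,\d_K(|\cdot|^\g)$. Then I would define $\tilde\mu_i$ as the pushforward of $\tilde\mu_i^{(1)}$ under the dilation $t\mapsto Mt$, i.e. $\tilde\mu_i(A)=\tilde\mu_i^{(1)}(A/M)$. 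These are probability measures supported on $[-M,M]$; symmetry about $0$ (item (i)) is preserved because the dilation commutes with $t\mapsto-t$; the moment identities (item (ii)) follow from $\int t^l\,d\tilde\mu_i(t)=M^l\int t^l\,d\tilde\mu_i^{(1)}(t)$ for each $l=0,\dots,K$; and item (iii) follows from $\int|t|^\g\,d\tilde\mu_i(t)=M^\g\int|x|^\g\,d\tilde\mu_i^{(1)}(x)$ together with the scaling identity for $\d_{K,\g}$ above.

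There is no real obstacle here: the only point requiring a word of care is the identification of the rescaled best-approximation constant, i.e. making sure that the normalization implicit in writing the right-hand side of (iii) as $2M^\g\d_{K,\g}$ is consistent with the definition of $\d_{K,\g}$ as the approximation error of $|t|^\g$ on $[-M,M]$ (rather than on $[-1,1]$); with the convention fixed as above this is automatic. Everything else is a routine verification of the three bullet points through the pushforward, so the proof is genuinely "immediate" as claimed.
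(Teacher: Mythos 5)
Your argument is correct and coincides with the paper's, which simply declares the lemma an immediate corollary of Proposition~\ref{mesuresf} applied to $f(x)=|x|^{\gamma}$, the dilation $t\mapsto Mt$ and the attendant verifications being left implicit. One small caveat on conventions: in the paper $\delta_{K,\gamma}$ denotes the best polynomial approximation error of $|x|^{\gamma}$ on $[-1,1]$ (consistent with Lemma~\ref{approx}, where $\delta_{K,\gamma}\asymp K^{-\gamma}$, and with the use of $M^{\gamma}\delta_{K,\gamma}$ in the proof of Theorem~\ref{theo_inf}), so your displayed identity should introduce the $[-M,M]$ approximation error as $M^{\gamma}\delta_{K,\gamma}$ rather than defining $\delta_{K,\gamma}$ to be that quantity; read literally, your convention would make item (iii) carry a spurious extra factor of $M^{\gamma}$, although your subsequent pushforward computation makes clear you intend the correct normalization.
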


For the function $f(x)=|x|^\g$, the asymptotically exact behavior of the best polynomial approximation $\d_{K,\g}$ as $K\to\infty$ is well known, see, for example, \cite[Theorem 7.2.2]{Timan} implying the following lemma. 
\begin{lemma}\label{approx}
If $\g>0$ is not an even integer, then there exist positive constants $c_*$ and $C^*$ depending only on $\g$ such that 
 $$
 {c_*}K^{-\g} \le \delta_{K,\g}\le {C^*}K^{-\g},\quad \forall \ K\in {\mathbb N}.
 $$
 \end{lemma}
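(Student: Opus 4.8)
The plan is to establish the two inequalities separately; the sharp constants and the exact first-order asymptotics $\delta_{K,\g}\sim c(\g)K^{-\g}$ form the content of \cite[Theorem 7.2.2]{Timan}, which I would simply cite in the write-up, but it is worth recording that both bounds are elementary.

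For the upper bound I would use a Jackson-type estimate. Write $\g=r+\beta$ with $r\in\NN$ and $\beta\in(0,1]$; under the hypothesis, $\beta=1$ precisely when $\g$ is a (necessarily odd) integer. The function $x\mapsto|x|^\g$ lies in $C^{r}([-1,1])$, and its $r$-th derivative equals $\pm c_\g|x|^{\g-r}$, hence belongs to the H\"older class of exponent $\beta=\g-r$ on $[-1,1]$ with a constant depending only on $\g$. The classical Jackson theorem for functions whose $r$-th derivative is H\"older then yields
\[
\delta_{K,\g}\le C\,K^{-r}\,\omega\!\left(\big(|x|^\g\big)^{(r)},\tfrac1K\right)\le C^{*}K^{-\g},
\]
with $C^{*}$ depending only on $\g$; when $0<\g<1$ this is just $\delta_{K,\g}\le C\,\omega\big(|x|^\g,\tfrac1K\big)=C^{*}K^{-\g}$.

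The lower bound is the Bernstein phenomenon and is the only delicate point. The assumption that $\g$ is not an even integer is essential here: otherwise $|x|^\g$ is a polynomial and $\delta_{K,\g}=0$ for $K\ge\g$. One route is the moment-matching duality of Proposition~\ref{mesuresf}: it suffices to exhibit, for every even $K$, two symmetric probability measures $\mu_0,\mu_1$ on $[-1,1]$ with $\int t^{l}\,d\mu_0=\int t^{l}\,d\mu_1$ for $l=0,\ldots,K$ and $\int|t|^\g\,d\mu_1-\int|t|^\g\,d\mu_0\ge c\,K^{-\g}$; placing mass on an $O(1/K^{2})$-scale mesh near the origin, say at the nodes of a rescaled Gauss or Chebyshev quadrature, where $|x|^\g$ is of order $K^{-\g}$, produces such a pair. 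Equivalently, one can use $k$-th order finite differences: if $\deg P<k$ then $\Delta_h^{k}P\equiv0$, so $\bigl|\Delta_h^{k}|x|^\g\bigr|\le 2^{k}\|\,|x|^\g-P\,\|_{\infty}$ whenever the stencil stays in $[-1,1]$, and choosing it centred at $0$ with spacing $h\asymp 1/k$, together with the fact that the $k$-th difference of $|y|^\g$ at the origin is nonzero (again because $\g$ is not an even integer), forces $\|\,|x|^\g-P\,\|_{\infty}\ge c_{*}k^{-\g}$ with $k\asymp K$.

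The main obstacle, were one to carry this out rather than cite Timan, lies entirely in the lower bound: checking that the chosen measures (resp.\ the finite-difference stencil) separate $|x|^\g$ by an amount of exact order $K^{-\g}$, and producing the constant $c_{*}$; this is the step where the arithmetic condition that $\g$ is not an even integer enters in a quantitative way.
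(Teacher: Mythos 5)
Your proposal is correct and matches the paper's treatment: the paper proves this lemma by nothing more than the citation of \cite[Theorem 7.2.2]{Timan} (Bernstein's asymptotics for the best uniform approximation of $|x|^{\gamma}$), which is exactly what you say you would do in the write-up. Your Jackson-type derivation of the upper bound and your (explicitly unverified) sketches of the lower bound go beyond what the paper records, and you correctly identify the lower bound as the only delicate point and the place where the hypothesis that $\gamma$ is not an even integer enters quantitatively.
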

 
Finally, the next lemma provides a useful bound on the coefficients $a_{\g,2k}$ in the canonical representation of the polynomial of best approximation   
	\begin{equation}\label{decompositionPgamma}
		 P_{\g,K}(x) = \sum_{k=0}^K a_{\g,2k} x^{2k}, \quad x \in \RR.
	\end{equation}
		\begin{lemma}\label{lemma_coefficientsgamma}
		Let $P_{\g,K}(\cdot)$ be the polynomial of best approximation of degree $2K$ for $|x|^\g$ on $[-1,1]$. Then the coefficients $a_{\g,2k}$ 
		in~(\ref{decompositionPgamma}) satisfy
		\begin{equation*}
			|a_{\g,2k}| \le C 6^{K}, \quad k=0,\dots,K,
		\end{equation*}
		where $C>0$ is a constant depending only on $\g$. 
	\end{lemma}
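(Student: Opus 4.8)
The plan is to bound the coefficients of $P_{\g,K}$ by expanding this polynomial in the Chebyshev basis and using that, for the Chebyshev polynomial $T_n$, the sum of the absolute values of its coefficients equals $|T_n(\ii)|$, which is of order $(1+\sqrt2)^{n}$. Throughout, write $[x^m]Q$ for the coefficient of $x^m$ in a polynomial $Q$.

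First I would record the elementary bound $\max_{x\in[-1,1]}|P_{\g,K}(x)|\le 2$: comparing $P_{\g,K}$ with the zero polynomial gives $\max_{x\in[-1,1]}\big||x|^\g-P_{\g,K}(x)\big|\le \max_{x\in[-1,1]}|x|^\g=1$, hence $\max_{x\in[-1,1]}|P_{\g,K}(x)|\le 2$. Since $P_{\g,K}$ is even, it has a Chebyshev expansion $P_{\g,K}=\sum_{\ell=0}^{K}b_{2\ell}T_{2\ell}$, and the orthogonality of the $T_j$ with respect to the weight $(1-x^2)^{-1/2}$ together with $\max_{x\in[-1,1]}|T_j(x)|=1$ yields $|b_{2\ell}|\le 2\max_{x\in[-1,1]}|P_{\g,K}(x)|\le 4$ for all $\ell$. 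As $[x^{2k}]T_{2\ell}=0$ for $\ell<k$, extracting the coefficient of $x^{2k}$ gives
$$|a_{\g,2k}|=\Big|\sum_{\ell=k}^{K}b_{2\ell}\,[x^{2k}]T_{2\ell}\Big|\le 4\sum_{\ell=k}^{K}\big|[x^{2k}]T_{2\ell}\big|,$$
so it suffices to prove $\sum_{\ell=k}^{K}\big|[x^{2k}]T_{2\ell}\big|\le C\,6^{K}$. For this I would use that the coefficient of $x^{2(\ell-m)}$ in $T_{2\ell}$ has sign $(-1)^m$, whence $\sum_{k'=0}^{\ell}\big|[x^{2k'}]T_{2\ell}\big|=|T_{2\ell}(\ii)|$; evaluating via $2T_n(x)=(x+\sqrt{x^2-1})^{n}+(x-\sqrt{x^2-1})^{n}$ at $x=\ii$ gives $|T_{2\ell}(\ii)|=\tfrac12\big[(1+\sqrt2)^{2\ell}+(\sqrt2-1)^{2\ell}\big]\le(1+\sqrt2)^{2\ell}=(3+2\sqrt2)^{\ell}$. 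In particular $\big|[x^{2k}]T_{2\ell}\big|\le(3+2\sqrt2)^{\ell}$, and summing the geometric series, $\sum_{\ell=k}^{K}\big|[x^{2k}]T_{2\ell}\big|\le\sum_{\ell=0}^{K}(3+2\sqrt2)^{\ell}\le C(3+2\sqrt2)^{K}\le C\,6^{K}$ because $3+2\sqrt2<6$. Combining the two displays proves the lemma (in fact with an absolute constant $C$).

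I expect the execution to be routine; the only delicate point is that the base must come out as $6$, equivalently as $3+2\sqrt2=(1+\sqrt2)^2$, and cruder estimates overshoot — for instance bounding $[x^{2k}]T_{2\ell}$ by $2^{2k}\binom{2\ell}{2k}$ and summing only gives $9^{K}$, while a Stirling-type bound on $4^{k}\binom{K+k}{2k}$ would require a somewhat delicate entropy optimization. The clean input is the identity $\sum_{k'}\big|[x^{2k'}]T_{2\ell}\big|=|T_{2\ell}(\ii)|$ used above. A fully equivalent one-line alternative is to invoke the classical inequality for the coefficients of a polynomial bounded on $[-1,1]$ (due to V.~A.~Markov; see, e.g., \cite{Timan}): since $2k$ and $2K$ have the same parity and $\max_{x\in[-1,1]}|P_{\g,K}(x)|\le 2$, it gives $|a_{\g,2k}|\le 2\,\big|[x^{2k}]T_{2K}\big|\le 2\,|T_{2K}(\ii)|\le 2(3+2\sqrt2)^{K}\le 2\cdot 6^{K}$, bypassing the Chebyshev expansion.
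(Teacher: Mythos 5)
Your proof is correct, and it takes a genuinely different route from the paper's. The paper deduces the lemma from a general coefficient bound for polynomials bounded on $[-1,1]$ (its Proposition~\ref{lemma_Szego}), proved via Szeg\"o's 1936 asymptotic for the minimal eigenvalue of the quadratic form $\int_{-1}^1 P^2$, i.e.\ of the lacunary Hilbert matrix $\big(\tfrac{1+(-1)^{i+j}}{i+j+1}\big)$: this gives $\sum_k a_k^2\le C(\sqrt2-1)^{-2N}$ for $\deg P=N$ and $\|P\|_\infty\le 1$, hence $\max_k|a_k|\le C(\sqrt2+1)^{N}$, which with $N=2K$ and $\|P_{\g,K}\|_\infty\le 2$ yields $(3+2\sqrt2)^K\le 6^K$ exactly as in your argument. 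Your route — Chebyshev expansion with $|b_{2\ell}|\le 4$, the alternating-sign identity $\sum_{k'}|[x^{2k'}]T_{2\ell}|=|T_{2\ell}(\ii)|$, and $|T_{2\ell}(\ii)|\le(1+\sqrt2)^{2\ell}$ — is more elementary and self-contained, needing only classical facts about Chebyshev polynomials rather than Szeg\"o's eigenvalue asymptotics; your one-line variant via V.~A.~Markov's coefficient inequality is essentially the sharp form of the same idea. What the paper's approach buys is an $\ell_2$ bound on the whole coefficient vector (slightly stronger than the sup bound) and a very short proof modulo the citation; what yours buys is transparency and independence from a nontrivial external theorem. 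Notably, both methods land on the same base $(1+\sqrt2)^2=3+2\sqrt2<6$, which, as you observe, is what cruder binomial estimates miss. All the individual steps you sketch (the bound $\|P_{\g,K}\|_\infty\le 2$ by comparison with the zero polynomial, the orthogonality bound on the $b_{2\ell}$, the sign pattern of Chebyshev coefficients, the evaluation $2T_n(\ii)=(\ii(1+\sqrt2))^n+(\ii(1-\sqrt2))^n$, and the geometric summation) check out.
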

This lemma is an immediate corollary of the following more general fact, which is a consequence of Szeg\"o's theorem on the minimal eigenvalue of a lacunary version of the Hilbert matrix.
\begin{proposition}\label{lemma_Szego}
		Let $P(x)=\sum_{k=0}^N a_{k} x^{k}$ be a polynomial such that $|P(x)|\le 1$ for all $x\in [-1,1]$. 
		Then there exists an absolute constant $C>0$ such that 
		\begin{equation*}
			|a_{k}| \le C  (\sqrt{2}+1)^{N}
		\end{equation*}
		for all $k\in\{0,\dots,N\}$.
			\end{proposition}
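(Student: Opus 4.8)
The plan is to reduce the sup-norm bound to an $L_2$-bound on the coefficient vector $\mathbf{a}=(a_0,\dots,a_N)^\top$ and then to apply Szeg\"o's eigenvalue estimate. Since $|P|\le 1$ on $[-1,1]$, we have $\int_{-1}^1 P(x)^2\,dx\le 2$, while expanding the square gives $\int_{-1}^1 P(x)^2\,dx=\mathbf{a}^\top G\,\mathbf{a}$, where $G=(G_{jk})_{0\le j,k\le N}$ is the Gram matrix of the monomials $1,x,\dots,x^N$ in $L_2([-1,1])$, i.e. $G_{jk}=\int_{-1}^1 x^{j+k}\,dx$. The Rayleigh quotient inequality then yields $\lambda_{\min}(G)\,\|\mathbf{a}\|_2^2\le\mathbf{a}^\top G\,\mathbf{a}\le 2$, hence $|a_k|^2\le\|\mathbf{a}\|_2^2\le 2/\lambda_{\min}(G)$ for every $k$. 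It therefore suffices to prove $\lambda_{\min}(G)\ge c\,(\sqrt{2}-1)^{2N}=c\,(\sqrt{2}+1)^{-2N}$ for some absolute constant $c>0$, since this gives $|a_k|\le\sqrt{2/c}\,(\sqrt{2}+1)^N$.

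Here I would exploit the lacunary structure of $G$: $G_{jk}=2/(j+k+1)$ if $j+k$ is even and $G_{jk}=0$ if $j+k$ is odd, so after permuting the indices by parity $G$ is orthogonally similar to a block-diagonal matrix $A\oplus B$, where $A$ is the Gram matrix of the even monomials and $B$ that of the odd ones. Substituting $j=2i,k=2l$ and $j=2i+1,k=2l+1$ respectively, $A=(1/(i+l+1/2))_{0\le i,l\le\lfloor N/2\rfloor}$ and $B=(1/(i+l+3/2))_{0\le i,l\le\lceil N/2\rceil-1}$ are generalized (shifted) Hilbert matrices of sizes at most $N/2+1$. Thus $\lambda_{\min}(G)=\min\{\lambda_{\min}(A),\lambda_{\min}(B)\}$, and we are reduced to bounding from below the smallest eigenvalue of an $m\times m$ matrix $H_m^{(\nu)}=(1/(i+l+\nu))_{0\le i,l<m}$ with $\nu\in\{1/2,3/2\}$ and $m\le N/2+1$.

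At this point I would invoke Szeg\"o's theorem on the minimal eigenvalue of (lacunary) Hilbert matrices: for each fixed $\nu>0$, $\lambda_{\min}(H_m^{(\nu)})\sim c_\nu\,m^{1/2}(\sqrt{2}-1)^{4m}$ as $m\to\infty$, so in particular there is an absolute $c>0$ with $\lambda_{\min}(H_m^{(\nu)})\ge c\,(\sqrt{2}-1)^{4m}$ for all $m\ge 1$ and $\nu\in\{1/2,3/2\}$ (the polynomial factor only helps, and the finitely many small $m$ are absorbed into $c$). Since $A$ has size $\lfloor N/2\rfloor+1\le N/2+1$, $B$ has size $\lceil N/2\rceil\le N/2+1$, and $\sqrt{2}-1<1$, this gives $\lambda_{\min}(G)\ge c\,(\sqrt{2}-1)^{4(N/2+1)}=c\,(\sqrt{2}-1)^{4}(\sqrt{2}-1)^{2N}$. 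Plugging this into $|a_k|^2\le 2/\lambda_{\min}(G)$ yields $|a_k|\le C(\sqrt{2}+1)^N$ with $C=\sqrt{2(\sqrt2-1)^{-4}/c}$, an absolute constant, which is the claim.

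The one genuinely delicate point — and the step I expect to be the main obstacle — is pinning down the exponential rate $(\sqrt{2}-1)^{4}$ in Szeg\"o's estimate and checking that the parity splitting exactly halves the exponent, so that a block of size $m\approx N/2$ produces $(\sqrt{2}-1)^{4m}\approx(\sqrt{2}+1)^{-2N}$ and hence $(\sqrt{2}+1)^{+N}$ after taking a square root; the polynomial prefactor and the $\nu$-dependence of the constant are immaterial since $\nu$ takes only the values $1/2$ and $3/2$. (An alternative route avoids $L_2$ entirely: by V.~A.~Markov's theorem the extremal polynomial for $|a_k|$ among $\deg P\le N$, $\|P\|_{\infty,[-1,1]}\le 1$, is $\pm T_N$ or $\pm T_{N-1}$, and for the monomial coefficients $c_k$ of $T_N$ one has $\sum_k|c_k|=|T_N(\ii)|=\tfrac12\,|(1+\sqrt2)^N+(1-\sqrt2)^N|\le(\sqrt2+1)^N$; but the $L_2$/Szeg\"o argument is shorter and is the one indicated by the statement.)
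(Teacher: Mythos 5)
Your proof is correct and follows essentially the same route as the paper: bound $\int_{-1}^1 P^2\le 2$, identify the quadratic form in the coefficients as a lacunary Hilbert-type Gram matrix, and invoke Szeg\"o's asymptotics $\lambda_{\min}\asymp N^{1/2}(\sqrt2-1)^{2N}$ to control $\|\mathbf a\|_2$. The only (harmless) difference is that you make the parity block-decomposition into two shifted Hilbert matrices explicit before applying Szeg\"o, whereas the paper cites Szeg\"o's result directly for the lacunary form.
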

	\begin{proof}
	We have
	\begin{equation}\label{Szego1}
	\int_{-1}^1
	\Big(\sum_{k=0}^N a_{k} x^{k}\Big)^2 dx = 2\sum_{i, j=0}^N \frac{a_ia_j}{i+j+1}\fcar_{i+j \,\text{even}}.
	\end{equation}
	It is easy to see that the quadratic form in \eqref{Szego1} is positive definite for all $N$. Furthermore, 
	as shown by \citet{Szego1936}, the minimal eigenvalue $\lambda_{\min}(N)$ of this quadratic form  satisfies
	\begin{equation*}\label{Szego2}
	\lambda_{\min}(N) = 2^{9/4}\pi^{3/2}N^{1/2}(\sqrt{2}-1)^{2N+3}(1+o(1)) \quad \text{as} \ N\to \infty.
	\end{equation*}
	Therefore, there exists an absolute constant $C_0>0$ such that $\lambda_{\min}(N) \ge C_0(\sqrt{2}-1)^{2N}$ for all~$N$. This inequality and \eqref{Szego1} imply that 
	$$
	C_0(\sqrt{2}-1)^{2N} \sum_{k=0}^N a_k^2 \le 1   
	$$
	and hence $\max_{k=0,\dots,N}|a_k| \le C_0^{1/2}(\sqrt{2}-1)^{-N}$. 
	\end{proof}

\section{Construction of the priors for the proof of Theorem~\ref{theo_inf}}

 The proof of Theorem~\ref{theo_inf} will be based on Theorem 2.15 in \cite{Tsybakov2009}. It proceeds by bounding the minimax risk from below by the Bayes risk with  the prior measures on $\theta$ that we are going to define in this section.

In what follows we set 
\begin{equation}\label{M}
\L=\sqrt{\log \Big(\frac{s^2}{d}\Big)}, \quad M=\varepsilon \L,
\end{equation}
 and  we denote by $K$ the smallest even  integer such that  
\begin{equation}\label{K}  
K\ge\frac{3}{2} e \log \Big(\frac{s^2}{d}\Big)= \frac{3}{2} e\L^2.
\end{equation}
We will also write for brevity
$$B=B_0(s).$$
In what follows, unless stated otherwise, $\tilde{\mu}_0$ and $\tilde{\mu}_1$ are the probability measures satisfying Lemma~\ref{mesures} where $M$ is defined in \eqref{M} and $K$ is the smallest even integer for which \eqref{K} holds.  

For $i=0,1$, the probability measure $\mu_i$ is defined as the distribution of random vector  
$\theta\in \RR^d$ with components $\theta_j$ having the form $\theta_j = \epsilon_j \eta_j$, $j=1,\dots, d$, where $\epsilon_j$ is a Bernoulli random variable with $\prob(\epsilon_j=1)=s/(2d)$, $\eta_j$ is distributed according to $\tilde{\mu}_i$, and $(\epsilon_1,\dots,\epsilon_d,  \eta_1,\dots, \eta_d)$ are mutually independent. 

Let $\PP_0$ and $\PP_1$ be the mixture probability measures defined by the relation
$$\PP_{i}(A)= \int_{\RR^d} \prob_\t(A)\; \mu_{i}(d\t), \  i = 0,1, $$
for any measurable set $A$.
The densities of $\PP_0$ and $\PP_1$ with respect to the Lebesgue measure on $\RR^d$ have the form
$$f_0(x)=\prod_{i=1}^d h(x_i) \quad \text{ and }\quad f_1(x)=\prod_{i=1}^d g(x_i),\quad x=(x_1,\ldots, x_d)\in \RR^d,
$$
respectively, where for  $x\in \RR$ we set
$$ h(x)=\frac{s}{2d} \varphi_{0}(x)+ \Big(1-\frac{s}{2d}\Big)\varphi(x)$$
and 
$$g(x)=\frac{s}{2d} \varphi_{1}(x)+ \Big(1-\frac{s}{2d}\Big)\varphi(x)$$ 
 with 
\begin{equation}\label{phii}  \varphi_i(x)= \int_\RR \varphi(x-t) \tilde{\mu}_i (dt), \  i = 0,1,
\end{equation}
where we denote by $\varphi(\cdot)$ the density of the $\calN (0,\varepsilon^2)$ distribution.

Note that  the measures $\mu_0$ and $\mu_1$ are not supported in $B$. We associate to them two probability 
  measures $\mu_{0,B}$ and $\mu_{1,B}$ supported in $B$ and the corresponding mixture measures defined by 
$$\mu_{i,B}(A)= \frac{\mu_i(A\cap B)}{\mu_i(B)},  \quad \PP_{i,B}(A)= \int_{\RR^d} \prob_\t(A)\; \mu_{i,B}(d\t),\quad i = 0,1,$$
for any measurable set $A$. 

\

\section{Proof of Theorem~\ref{theo_inf}}

Since we have $\ell(t)\ge \ell(a)\fcar_{t> a}$ for any $a>0$, it is enough to prove the theorem for the indicator loss $\ell(t)=\fcar_{t> a}$.
Introduce the following notation: 
$$m_{i}= \int_{\RR^d} N_\g(\t) \mu_{i}(d\t),\quad  \quad v_{i}^2=  \int_{\RR^d} (N_\g(\t)-m_i)^2 \mu_{i}(d\t),\quad i = 0,1.
$$
Note that Lemmas~\ref{mesures} and~\ref{approx} imply:
\begin{align}\label{mi}
m_{1}-m_{0} &= d\Big(\int_{\RR^d} |\t_1|^\g \mu_{1}(d\t)- \int_{\RR^d} |\t_1|^\g \mu_{0}(d\t)\Big)
= \frac{s}{2}\Big(\int_{-M}^M |t|^\g \tilde\mu_{1}(dt)- \int_{-M}^M |t|^\g \tilde\mu_{0}(dt)\Big)
\\ \nonumber
&= s M^{\g}\delta_{K,\g} \ge c_*  s(M/K)^\g \ge C_1\frac{ \varepsilon^\g s}{\L^\g},
\end{align}
where $C_1>0$ is a constant depending only on $\g$.

Let $V(P,Q)$ denote the total  variation distance between two probability measures $P$ and~$Q$. 
For any $u>0$ and any $c\in \RR$ we have, using Theorem 2.15 in \cite{Tsybakov2009},
\begin{equation}\label{fuzzy}
\inf_{\hat{T}} \sup_{\t \in B_0(s)} \prob_\t (|\hat{T}-N_\g(\t)|\ge u) \ge \frac{1-V'}{2},
\end{equation}
where
$$
V' = V(\PP_{0,B},\PP_{1,B})+\mu_{0,B}(N_\g(\t)\ge c  )+\mu_{1,B}(N_\g(\t)\le c+2u).
$$
We now apply \eqref{fuzzy} with the parameters
 $$c=m_{0}+3v_{0}, \quad u= \frac{m_{1}-m_{0}}{4}.$$ 
 By Chebyshev-Cantelli inequality,
\begin{equation}\label{beta0}
\mu_{0}(N_\g(\t)\ge c)\le \frac{v_0^2}{v_0^2 + (c-m_0)^2}=\frac{1}{10}.
\end{equation}
Next, since the measures $\tilde\mu_{0}$ and $\tilde\mu_{0}$ are supported in $[-M,M]$,
\begin{equation}\label{voi}\max(v_0^2,v_{1}^2) \le d M^{2\g}= d \varepsilon^{2\g} \L^{2\g}.
\end{equation} 
Thus, we may write
$$\max(v_0,v_{1})\le \Big( \frac{\sqrt{d}}{s}   \L^{2\g}\Big) \frac{\varepsilon^{\g} s}{\L^\g} ,$$
where, for $\bar C$ large enough, $\frac{\sqrt{d}}{s}   \L^{2\g}= \frac{\sqrt{d}}{s}  \log^\g (\frac{s^2}{d})\le C_1/12$ (recall that $s^2\ge \bar C d$ by assumption). Therefore,
\begin{equation}\label{vi}
\max(v_0,v_{1})\le \frac{C_1}{12} \frac{\varepsilon^\g s}{\L^\g}.
\end{equation}
It follows from  \eqref{mi}, \eqref{vi} and Chebyshev-Cantelli inequality that 
\begin{align}\label{beta1}
\mu_1(N_\g(\t)\le c+2u) &= \mu_1(N_\g(\t)-m_1\le -\frac{m_1+m_0}{2}+3v_0)\\ \nonumber
&\le \mu_1(N_\g(\t)-m_1\le -\frac{m_1- m_0}{2}+3v_0)\\ 
&\le  \mu_1\Big(N_\g(\t)-m_1\le -\frac{C_1}{4} \frac{\varepsilon^\g s}{\L^\g}\Big)\le \frac1{10}.\nonumber
\end{align}
By Lemma~\ref{moy}, we have $\mu_i(B)\ge 7/8$, $i=0,1$. Combining these inequalities with \eqref{beta0} 
and~\eqref{beta1} we immediately conclude that
\begin{equation}\label{betax}
\mu_{0,B}(N_{\g}(\t)\ge c)+\mu_{1,B}(N_{\g}(\t)\le c+2u)\le 8/35.
\end{equation}
Next, we consider the total variation distance $V(\PP_{0,B}, \PP_{1,B})$. 
Using Lemma~\ref{moy} we get that, for $\bar C$ large enough,
\begin{align}\label{V}
V(\PP_{0,B}, \PP_{1,B}) &\le V(\PP_{0,B}, \PP_0) +V(\PP_{0}, \PP_1) + V(\PP_1, \PP_{1,B}) \\
&\le  V(\PP_0, \PP_1) + \mu_0( B^c) + \mu_1(B^c) 
\nonumber \\
&\le  V(\PP_0, \PP_1) + 1/4
\nonumber \\
& \le \sqrt{\chi^2(\PP_1, \PP_0)/2} + 1/4
\nonumber \\
 &\le (\sqrt{2}+ 1)/4,
 \nonumber
\end{align}
where the last two inequalities are due to Pinsker's inequality and  Lemma~\ref{chi}, respectively. Combining \eqref{fuzzy},  \eqref{betax} and \eqref{V} we get that, if $s^2\ge \bar C d$ for $\bar C>0$ large enough, there exists a constant $C>0$ depending only on $\g$ such that 
$$\inf_{\hat{T}} \sup_{\t \in B_0(s)} \prob_\t \Big(|\hat{T}-N_\g(\t)|\ge C \frac{ \varepsilon^\g s}{\L^\g}\Big)> \frac1{16} .$$
This completes the proof.

\section{Lemmas for the proof of Theorem~\ref{theo_inf}}

\begin{lemma}\label{moy} For $i=0,1$, we have 
\begin{equation}\label{moy1} 
V(\PP_i, \PP_{i,B}) \le  \mu_i( B^c).
\end{equation}
Furthermore, there exists an absolute constant $\bar C>0$ such that, for any  $s^2\ge \bar C d$, 
\begin{equation}\label{moy2}
\mu_i(B^c)\le 1/8, \quad  i=0,1. 
\end{equation}
\end{lemma}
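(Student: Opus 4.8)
The plan is to prove the two assertions separately; both are routine once the right reductions are made. For \eqref{moy1}, I would first bound the total variation distance between the \emph{priors} $\mu_i$ and $\mu_{i,B}$, and then transfer this to the mixtures. Writing $q=\mu_i(B^c)$ (the case $q=0$ being trivial, since then $\mu_{i,B}=\mu_i$) and letting $\mu_{i,B^c}$ denote $\mu_i$ conditioned on $B^c$, one has the convex decomposition $\mu_i=(1-q)\mu_{i,B}+q\,\mu_{i,B^c}$, hence $\mu_i-\mu_{i,B}=q(\mu_{i,B^c}-\mu_{i,B})$ and therefore $V(\mu_i,\mu_{i,B})=q\,V(\mu_{i,B^c},\mu_{i,B})\le q=\mu_i(B^c)$, because the total variation distance between probability measures never exceeds $1$. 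Since $\PP_i$ and $\PP_{i,B}$ are the images of $\mu_i$ and $\mu_{i,B}$ under the common Markov kernel $\t\mapsto\prob_\t$, the data-processing inequality yields $V(\PP_i,\PP_{i,B})\le V(\mu_i,\mu_{i,B})\le\mu_i(B^c)$, which is exactly \eqref{moy1}. If one prefers to avoid invoking data processing, the same bound follows directly by splitting $\int\prob_\t(A)\,\mu_i(d\t)=\int_B+\int_{B^c}$ and noting that the two resulting discrepancy terms have opposite signs and are each bounded by $\mu_i(B^c)$.

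For \eqref{moy2}, the key point is that under $\mu_i$ the coordinates of $\t$ have the form $\t_j=\epsilon_j\eta_j$ with $\epsilon_1,\dots,\epsilon_d$ independent Bernoulli variables satisfying $\prob(\epsilon_j=1)=s/(2d)$, so that $\fcar_{\t_j\ne0}\le\epsilon_j$ and hence $\|\t\|_0\le W:=\sum_{j=1}^d\epsilon_j$ surely. Since $\esp W=s/2$, this gives
$$\mu_i(B^c)=\prob\big(\|\t\|_0> s\big)\le\prob(W\ge s)=\prob\big(W-\esp W\ge \tfrac{s}{2}\big).$$
Now $W$ is a sum of $d$ independent $[0,1]$-valued random variables, so Hoeffding's inequality gives $\prob(W-\esp W\ge t)\le e^{-2t^2/d}$; taking $t=s/2$ and using the hypothesis $s^2\ge\bar C d$ produces $\mu_i(B^c)\le e^{-s^2/(2d)}\le e^{-\bar C/2}$, which is at most $1/8$ as soon as $\bar C\ge 2\log 8$. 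Declaring $\bar C$ to be this absolute constant finishes the proof.

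I do not anticipate a genuine obstacle in either step. The only points that warrant a little care are the data-processing reduction in \eqref{moy1} (which, as noted, can be replaced by an explicit splitting of the mixture integral) and the observation that the standing hypothesis $s^2\ge\bar C d\ge\bar C$ forces $s\ge\sqrt{\bar C}$, which is what makes the concentration of $W$ about its mean effective uniformly over all admissible pairs $(s,d)$ rather than only asymptotically.
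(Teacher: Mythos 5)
Your proof is correct and follows essentially the same route as the paper, which simply invokes Lemma~4 of \cite{CCT2018} for the mixture/conditioning argument behind \eqref{moy1} and a binomial tail bound for \eqref{moy2}. The only (harmless) difference is in the tail estimate: the paper bounds $\prob\big(\mathcal{B}(d,\tfrac{s}{2d})>s\big)$ by $e^{-s/16}$ via a Chernoff-type bound and then uses $s\ge\sqrt{\bar C d}\ge\sqrt{\bar C}$, whereas your Hoeffding bound $e^{-s^2/(2d)}\le e^{-\bar C/2}$ reads off the hypothesis $s^2\ge\bar C d$ directly.
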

\begin{proof}
We can use, for example, Lemma~4 in \cite{CCT2018}. Repeating its argument we get that  
$V(\PP_i, \PP_{i,B}) \le  \mu_i( B^c) = \prob\big({\mathcal B}\big(d,\frac{s}{2d}\big)>s\big)\le e^{-\frac{s}{16}}$ where ${\mathcal B}\big(d,\frac{s}{2d}\big)$ is the binomial random variable with parameters $d$ and $\frac{s}{2d}$.
\end{proof}

\begin{lemma}\label{chi1}
Let $\tilde{\mu}_0$ and $\tilde{\mu}_1$ be two probability measures on $[-M,M]$ satisfying the moment matching property (ii) of Lemma~\ref{mesures} with some $K\ge 1$. Let $\varphi_0$ and  $\varphi_1$ be defined in \eqref{phii} where  $\varphi$ is the density of $\calN(0,\varepsilon^2)$ distribution. Then
$$
\int \frac{(\varphi_0(x) -\varphi_1(x))^2}{\varphi(x)} dx \le  \sum_{k=K+1}^\infty \frac{\L^{2k}}{k!}
$$
where $\L=M/\varepsilon$.
\end{lemma}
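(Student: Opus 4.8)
The plan is to compute the chi-square type integral $\int (\varphi_0-\varphi_1)^2/\varphi\,dx$ by expanding the Gaussian densities in the Hermite basis, which diagonalizes the quadratic form. First I would write, for each $i=0,1$, the density $\varphi_i(x) = \int \varphi(x-t)\,\tilde\mu_i(dt)$ and use the classical generating-function identity for Hermite polynomials, namely $\varphi(x-t) = \varphi(x)\sum_{k\ge 0} \frac{1}{k!}(t/\varepsilon)^k H_k(x/\varepsilon)$ (with the scaling appropriate to variance $\varepsilon^2$). Integrating against $\tilde\mu_i(dt)$ gives $\varphi_i(x) = \varphi(x)\sum_{k\ge 0} \frac{\nu_{i,k}}{k!\,\varepsilon^k} H_k(x/\varepsilon)$, where $\nu_{i,k} = \int t^k\,\tilde\mu_i(dt)$ is the $k$-th moment of $\tilde\mu_i$. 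Hence
\[
\varphi_0(x)-\varphi_1(x) = \varphi(x)\sum_{k\ge 0}\frac{\nu_{0,k}-\nu_{1,k}}{k!\,\varepsilon^k} H_k(x/\varepsilon).
\]
By the moment-matching property (ii) of Lemma~\ref{mesures}, $\nu_{0,k}=\nu_{1,k}$ for $k=0,1,\dots,K$, so only terms with $k\ge K+1$ survive in the sum.

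Next I would divide by $\varphi(x)$, square, and integrate term by term, using the orthogonality of Hermite polynomials with respect to the standard Gaussian weight: $\int H_j(u)H_k(u)\frac{1}{\sqrt{2\pi}}e^{-u^2/2}\,du = k!\,\delta_{jk}$. After the change of variables $u = x/\varepsilon$ this yields
\[
\int \frac{(\varphi_0(x)-\varphi_1(x))^2}{\varphi(x)}\,dx = \sum_{k=K+1}^\infty \frac{(\nu_{0,k}-\nu_{1,k})^2}{k!\,\varepsilon^{2k}}.
\]
It remains to bound $(\nu_{0,k}-\nu_{1,k})^2$. Since $\tilde\mu_0$ and $\tilde\mu_1$ are probability measures supported on $[-M,M]$, each moment satisfies $|\nu_{i,k}|\le M^k$, so $(\nu_{0,k}-\nu_{1,k})^2 \le (|\nu_{0,k}|+|\nu_{1,k}|)^2$. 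A crude bound $|\nu_{0,k}-\nu_{1,k}|\le M^k$ — which holds because, for instance, both moments lie in $[-M^k,M^k]$ and one can also note $\tilde\mu_i$ symmetric forces odd moments to vanish, but in any case the difference of two numbers in $[-M^k,M^k]$ has absolute value at most... — here I should be slightly careful: the difference of two numbers in $[-M^k,M^k]$ is bounded by $2M^k$, not $M^k$. To get exactly the stated bound with $\L^{2k}/k!$, one uses instead that the relevant quantity is controlled by $M^{2k}$: since $\tilde\mu_0,\tilde\mu_1$ are symmetric, $\nu_{i,k}=0$ for odd $k$, and for even $k$ one has $0\le \nu_{i,k}\le M^k$ (as $t^k\ge 0$), so $(\nu_{0,k}-\nu_{1,k})^2 \le \max(\nu_{0,k},\nu_{1,k})^2\le M^{2k}$. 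Substituting $M=\varepsilon\L$ gives $(\nu_{0,k}-\nu_{1,k})^2/\varepsilon^{2k}\le \L^{2k}$, hence the sum is at most $\sum_{k\ge K+1}\L^{2k}/k!$, which is the claim.

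The main obstacle is the bookkeeping around the Hermite expansion of a Gaussian density shifted by a random amount and integrated against $\tilde\mu_i$ — in particular getting the normalization constants right (the factors $k!$ and $\varepsilon^k$) and justifying the interchange of the infinite sum with the integral over $x$. The interchange is legitimate because the Hermite series for $\varphi(x-t)/\varphi(x)$ converges in $L^2(\varphi)$ uniformly for $t\in[-M,M]$ (the tail is controlled by $\sum_{k>N}M^{2k}/k!\to 0$), so Fubini and Parseval apply; I would state this and move on. Everything else is a routine orthogonality computation once the expansion is in place, and the symmetry of the $\tilde\mu_i$ (property (i) of Lemma~\ref{mesures}) is what lets the final moment bound be stated cleanly as $\L^{2k}$ rather than with an extra constant.
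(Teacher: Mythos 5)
Your proposal is correct and follows essentially the same route as the paper: both reduce the problem to the identity $\int(\varphi_0-\varphi_1)^2/\varphi\,dx=\sum_{k\ge 0}\big(\nu_{0,k}-\nu_{1,k}\big)^2/(k!\,\varepsilon^{2k})$, kill the terms $k\le K$ by moment matching, and bound the remaining squared moment differences by $M^{2k}=\varepsilon^{2k}\L^{2k}$. The only cosmetic difference is that you obtain the identity via the Hermite generating function and orthogonality, whereas the paper computes $\int\varphi(x-\vt)\varphi(x-\vt')/\varphi(x)\,dx=e^{\vt\vt'/\varepsilon^2}$ and Taylor-expands the exponential — an equivalent calculation; your explicit use of symmetry to handle odd moments (avoiding the factor-of-two loss) is in fact slightly more careful than the paper's wording.
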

\begin{proof} By rescaling, it suffices to consider the case $\varepsilon=1$, $M=\L$. 
Introducing the notation ${\mathbb E}_i(k)=\int t^k \tilde{\mu}_i(dt)$, $i=0,1$, it is straightforward to check that
\begin{align*}
\int \frac{(\varphi_0(x) -\varphi_1(x))^2}{\varphi (x)} dx&= \int e^{\vt\vt'} \tilde{\mu}_1(d\vt)\tilde{\mu}_1(d\vt') + \int e^{\vt\vt'} \tilde{\mu}_0(d\vt)\tilde{\mu}_0(d\vt') - 2\int e^{\vt\vt'} \tilde{\mu}_1(d\vt)\tilde{\mu}_0(d\vt')\\
&= \sum_{k=0}^\infty \frac{1}{k!}\Big(({\mathbb E}_1(k))^{2} + ({\mathbb E}_0(k))^{2} - 2{\mathbb E}_1(k) {\mathbb E}_0(k)  \Big)\\
&= \sum_{k=0}^\infty \frac{1}{k!}\Big({\mathbb E}_1(k)  - {\mathbb E}_0(k)  \Big)^2.
\end{align*}
It remains to notice that ${\mathbb E}_1(k)  = {\mathbb E}_0(k)$ for $k=0,\dots, K$, by property (ii) of Lemma~\ref{mesures}, and 
$|{\mathbb E}_1(k)  - {\mathbb E}_0(k)|\le \L^{2k}$ for all $k$.
\end{proof}

\begin{lemma}\label{chi}
If  $s^2\ge 4d$, then
$$\chi^2(\PP_1, \PP_0) <  1/4.$$ 
\end{lemma}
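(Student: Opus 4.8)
The plan is to exploit the product structure of $\PP_0$ and $\PP_1$ to reduce the computation of $\chi^2(\PP_1,\PP_0)$ to a one-dimensional quantity, and then to control that quantity by combining Lemma~\ref{chi1} with the choice of $K$ in \eqref{K}. First I would write $g=h+\frac{s}{2d}(\varphi_1-\varphi_0)$, so that, since $\varphi_0,\varphi_1,\varphi$ are probability densities,
\[
a:=\int\frac{g^2(x)}{h(x)}\,dx-1=\int\frac{(g(x)-h(x))^2}{h(x)}\,dx=\Big(\frac{s}{2d}\Big)^2\int\frac{(\varphi_1(x)-\varphi_0(x))^2}{h(x)}\,dx\ge 0 .
\]
Because $\PP_1$ and $\PP_0$ are product measures with identical marginal densities $g$ and $h$ respectively, tensorization of the $\chi^2$-divergence gives $\chi^2(\PP_1,\PP_0)=(1+a)^d-1\le e^{da}-1$, so it suffices to prove $da<\log(5/4)$.

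Next, since $s\le d$ we have $1-\frac{s}{2d}\ge\frac12$, hence $h\ge\frac12\varphi$ pointwise and therefore $\int\frac{(\varphi_1-\varphi_0)^2}{h}\le 2\int\frac{(\varphi_1-\varphi_0)^2}{\varphi}$. Applying Lemma~\ref{chi1} (whose moment-matching hypothesis holds by Lemma~\ref{mesures}(ii) up to order $K$) bounds the right-hand side by $\sum_{k=K+1}^\infty\L^{2k}/k!$, where $\L^2=\log(s^2/d)$. This yields
\[
da\le\frac{s^2}{2d}\sum_{k=K+1}^{\infty}\frac{\L^{2k}}{k!} .
\]

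The remaining step is a tail estimate for the exponential series. By the defining property $K\ge\frac32 e\L^2$ of \eqref{K}, for every $k\ge K+1$ one has $\L^2/k\le\frac{2}{3e}<1$, so the series is dominated by a geometric one with ratio $\frac{2}{3e}$; moreover $(K+1)!\ge\big((K+1)/e\big)^{K+1}$ together with $e\L^2/(K+1)\le\frac23$ gives $\L^{2(K+1)}/(K+1)!\le(2/3)^{K+1}$. Combining these gives $\sum_{k=K+1}^{\infty}\L^{2k}/k!\le c_0\,(2/3)^K$ for an explicit constant $c_0<1$, and since $(2/3)^K=(3/2)^{-K}\le(s^2/d)^{-\frac32 e\log(3/2)}$ with $\frac32 e\log(3/2)>1.65$, we obtain
\[
da\le\frac{c_0}{2}\,\frac{s^2}{d}\,\Big(\frac{s^2}{d}\Big)^{-\frac32 e\log(3/2)}\le\frac{c_0}{2}\Big(\frac{s^2}{d}\Big)^{-0.65} .
\]
Under the hypothesis $s^2/d\ge 4$ the right-hand side is below $\log(5/4)$, which yields $\chi^2(\PP_1,\PP_0)<1/4$.

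The main obstacle is purely quantitative: the power saving $(s^2/d)^{-0.65}$ afforded by the constant $\frac32 e$ in \eqref{K} is modest, so one must keep careful track of the constants in the tail bound — using the geometric decay of the series and Stirling's inequality rather than the crude bound $\sum_{k>K}\L^{2k}/k!\le e^{\L^2}$ — in order to push $da$ below $\log(5/4)\approx 0.22$, not merely below $1$. The tensorization identity and the reduction to the one-dimensional $\chi^2$ are routine.
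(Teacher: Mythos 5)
Your proposal is correct and follows essentially the same route as the paper: tensorization of the $\chi^2$-divergence, the pointwise bound $h\ge\varphi/2$, Lemma~\ref{chi1}, and a tail estimate for $\sum_{k>K}\L^{2k}/k!$ via $k!\ge(k/e)^k$ and the choice $K\ge\tfrac32 e\L^2$, which makes the series geometric with ratio at most $2/3$ and yields a bound of order $(s^2/d)^{1-\frac32 e\log(3/2)}$ with negative exponent, exactly as in the paper's proof. The numerical constants in your final chain check out, so no gap.
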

\begin{proof}
Since $\PP_0$ and $\PP_1$ are product measures we have   
$$
\chi^2(\PP_1,\PP_0)=\left(1+ \int \frac{(g-h)^2}{h} \right)^d-1,
$$
cf., e.g., \cite[page 86]{Tsybakov2009}. It follows from the definition of $g$ and $h$ and from Lemma~\ref{chi1} that
\begin{align*}
\int \frac{(g-h)^2}{h} &\le \frac{1}{1-\frac{s}{2d} }\Big(\frac{s}{2d} \Big)^2\int \frac{(\varphi_{1}-\varphi_{0})^2}{\varphi}
\le 2 \Big(\frac{s}{2d} \Big)^2  \sum_{k=K+1}^\infty \frac{\L^{2k}}{k!}.
\end{align*}
Using the inequalities $k!\ge ({k}/{e})^k$ and $1+x\le e^x$  we get 
$$
\chi^2(\PP_1, \PP_0)\le \exp\Big( \frac{s^2}{2d}\sum_{k=K+1}^\infty
 \Big(\frac{e\L^{2}}{k}\Big)^k \Big)-1.
 $$
 Recall that $K\ge 3e\L^2/2$ and $K-2< 3e\L^2/2$.  Thus,
 \begin{align*}
\frac{s^2}{2d}\sum_{k=K+1}^\infty
 \Big(\frac{e\L^{2}}{k} \Big)^k\le \frac{s^2}{2d}\sum_{k=K+1}^\infty
 ({2}/{3})^k = \frac{s^2}{d}
 ({2}/{3})^K 
 <  \frac{4s^2}{9d} \exp\Big(3e \log(2/3)L^2/2\Big)= \frac{4}{9}\Big(\frac{s^2}{d}\Big)^a
 \end{align*}
where $a=1+3e \log(2/3)/2 < -0.6$. Since $s^2\ge 4d$ we get
 $ 
\chi^2(\PP_1, \PP_0)\le \exp(4^{0.4}/9)-1< 1/4.
 $
\end{proof}

{\color{black}

\section{Proof of Theorems~\ref{theorem_lowerbound_norm1} and~\ref{theorem_lowerbound_norm2}}

Theorems~\ref{theorem_lowerbound_norm1} and~\ref{theorem_lowerbound_norm2} are obtained as corollaries of 
Theorem~\ref{theo_inf_sparse} thanks to the following lemma.

\begin{lemma}\label{lemma_reduction}
Let $\g>1$. Then, for any $\phi>0$, any $\t\in \RR^d$ such that $\|\t\|_\g\le \phi^{1/\g}$, and any estimator $\hat T\ge 0$,
$$
 \prob_\t\, \Big( |\hat{T}-\|\t\|_\g|\ge \phi^{1/\g}\Big) \ge 
 \prob_\t\, \Big( |\hat{T}^\g-\|\t\|_\g^{\g}|\ge C\phi\Big). 
$$
\end{lemma}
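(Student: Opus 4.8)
The plan is to reduce the probability of the event $\{|\hat T - \|\t\|_\g|\ge \phi^{1/\g}\}$ to that of the event $\{|\hat T^\g - \|\t\|_\g^\g|\ge C\phi\}$ by a purely deterministic, pointwise comparison of the two events on the sample space. That is, I would show that for every realization $y=(y_1,\dots,y_d)$ for which $|\hat T^\g - \|\t\|_\g^\g|\ge C\phi$, one necessarily has $|\hat T - \|\t\|_\g|\ge \phi^{1/\g}$, provided $\|\t\|_\g \le \phi^{1/\g}$ and $C$ is chosen depending only on $\g$. Once this set inclusion is established, the probability inequality follows immediately by monotonicity of $\prob_\t$.

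The key step is therefore an elementary inequality about real numbers. Write $a = \hat T \ge 0$ and $b = \|\t\|_\g \ge 0$, and set $t = \phi^{1/\g}$, so the hypothesis is $b \le t$. I want: if $|a-b| < t$, then $|a^\g - b^\g| < C t^\g$ (the contrapositive of the desired inclusion). Assuming $|a-b|<t$ and $b\le t$, we get $a < b + t \le 2t$, hence $a,b \in [0,2t)$. On this interval the function $x\mapsto x^\g$ is Lipschitz only if $\g\ge 1$, and indeed by the mean value theorem $|a^\g - b^\g| = \g\,\xi^{\g-1}|a-b|$ for some $\xi$ between $a$ and $b$, so $\xi \le 2t$ and $\xi^{\g-1}\le (2t)^{\g-1}$ (using $\g>1$ so that $x\mapsto x^{\g-1}$ is increasing). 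Therefore $|a^\g-b^\g| \le \g (2t)^{\g-1} |a-b| < \g 2^{\g-1} t^{\g-1}\cdot t = \g 2^{\g-1} t^\g$. Taking $C = \g 2^{\g-1}$, which depends only on $\g$, gives $|a^\g - b^\g| < C\phi$, completing the contrapositive.

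Putting it together: for the given $\t$ with $\|\t\|_\g\le\phi^{1/\g}$ and any estimator $\hat T\ge 0$, the event $\{|\hat T^\g - \|\t\|_\g^\g|\ge C\phi\}$ is contained in the event $\{|\hat T - \|\t\|_\g|\ge \phi^{1/\g}\}$, and hence
$$
\prob_\t\big(|\hat T - \|\t\|_\g|\ge \phi^{1/\g}\big) \ge \prob_\t\big(|\hat T^\g - \|\t\|_\g^\g|\ge C\phi\big),
$$
which is the claim (with $C = \g 2^{\g-1}$, or any larger constant). The argument is entirely deterministic, so no probabilistic input beyond monotonicity is needed.

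The only mildly delicate point — and the closest thing to an obstacle — is making sure the constant $C$ really depends on $\g$ alone and not on $\phi$ or $d$; this is transparent from the mean value estimate above, since the bound $a<2t$ is scale-free once we normalize by $t=\phi^{1/\g}$. One should also note that the case $b=0$ (i.e. $\t=0$) is covered without change, since then $a<t$ directly gives $a^\g<t^\g\le C\phi$. No separate treatment of boundary cases is required, and the restriction $\g>1$ is used exactly once, to ensure $x\mapsto x^{\g-1}$ is nondecreasing on $[0,2t]$.
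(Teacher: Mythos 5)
Your proof is correct and is essentially the paper's argument in contrapositive form: both rest on the Lipschitz bound $|a^\g-b^\g|\le\g\max(a,b)^{\g-1}|a-b|$ together with the observation that $\hat T\le 2\phi^{1/\g}$ on the complementary event, and both arrive at the same constant $C=\g 2^{\g-1}$. No meaningful difference in approach.
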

\begin{proof}
Since $\|\t\|_\g\le \phi^{1/\g}$ we have
\begin{align*}
|\hat{T}-\|\t\|_\g|& = |\hat{T}-\|\t\|_\g| \fcar_{\{\hat{T} > 2\phi^{1/\g}\}}+ |\hat{T}-\|\t\|_\g| \fcar_{\{\hat{T} \le 2\phi^{1/\g}\}}
\\
& \ge \phi^{1/\g}\fcar_{\{\hat{T} > 2\phi^{1/\g}\}}+ |\hat{T}-\|\t\|_\g| \fcar_{\{\hat{T} \le 2\phi^{1/\g}\}}
\\
& \ge \phi^{1/\g}\fcar_{\{\hat{T} > 2\phi^{1/\g}\}}+ \frac{|\hat{T}^\g-\|\t\|_\g^\g|}{\g \max(\hat{T}^{(\g-1)/\g},\|\t\|_\g^{\g-1})} \fcar_{\{\hat{T} \le 2\phi^{1/\g}\}}
\\
& \ge \phi^{1/\g}\fcar_{\{\hat{T} > 2\phi^{1/\g}\}}+ \frac{|\hat{T}^\g-\|\t\|_\g^\g|}{2^{\g-1}\g \phi^{(\g-1)/\g}} \fcar_{\{\hat{T} \le 2\phi^{1/\g}\}}
\end{align*}
where we have used the inequality $|x^\g-y^\g|\le \g \max(x^{\g-1},y^{\g-1})|x-y|$, $\forall x,y >0$. This yields the result of the
lemma with $C=2^{\g-1}\g$.
\end{proof}

It suffices to prove  Theorems~\ref{theorem_lowerbound_norm1} and~\ref{theorem_lowerbound_norm2} for the indicator loss $\ell(t)=\fcar_{t\ge a}, a>0$,
and to consider the infimum only over non-negative estimators $\hat{T}\ge 0$ since the estimated functional is non-negative.  It follows from Lemma~\ref{lemma_reduction} that
$$
 \inf_{\hat{T}\ge 0} \sup_{\t\in B} \prob_\t\, \Big( |\hat{T}-\|\t\|_\g|\ge \phi^{1/\g}\Big) \ge 
 \inf_{\hat{T}'\ge 0} \sup_{\t\in B} \prob_\t\, \Big( |\hat{T}'-\|\t\|_\g^{\g}|\ge C\phi\Big),
$$ 
where $B=B_0(s)\cap \{\|\t\|_\g\le \phi^{1/\g}\}$.
 The result of Theorem~\ref{theorem_lowerbound_norm1}  follows immediately from this inequality with $\phi=c\varepsilon^\g s \log^{\g/2}(1+d/s^2)$  and
Theorem~\ref{theo_inf_sparse}.
Here, $c$ is a sufficiently small positive number. To prove Theorem~\ref{theorem_lowerbound_norm2}, it suffices to apply Theorem~\ref{theo_inf_sparse} with 
$s$ being the minimal integer greater than or equal to $\sqrt{ d}$ and to use the fact that the classes $B_0(s)$ are nested.

\section{Proof of Theorem~\ref{theorem_lowerbound_norm_hard} }

The proof is analogous to that of Theorem~\ref{theo_inf} subject to a modification that we detail here. 
Let $c$ and $u$ be as in the proof of Theorem~\ref{theo_inf}:  
$$c=m_{0}+3v_{0}, \quad u= \frac{m_{1}-m_{0}}{4}.$$ 
Define
$$
c'=c^{1/\g}, \quad u' = \frac{(c+2u)^{1/\g}-c^{1/\g}}{2}\,.
$$
Analogously to \eqref{fuzzy}, we obtain from Theorem 2.15 in \cite{Tsybakov2009} that
\begin{equation}\label{fuzzy1}
\inf_{\hat{T}} \sup_{\t \in B_0(s)} \prob_\t (|\hat{T}-\|\t\|_\g|\ge u') \ge \frac{1-V'}{2},
\end{equation}
where
$$
V' = V(\PP_{0,B},\PP_{1,B})+\mu_{0,B}(\|\t\|_\g\ge c'  )+\mu_{1,B}(\|\t\|_\g\le c'+2u').
$$
Note that this value is equal to $V'$ defined in the proof of Theorem~\ref{theo_inf}. Hence, $V'$ is bounded from above exactly as in the proof of Theorem~\ref{theo_inf} and to complete the proof of Theorem~\ref{theorem_lowerbound_norm_hard} we only need to check that $u' \ge {C\varepsilon s^{1/\g}}{\log^{1/2-\g}(s^2/d)}$, which is the desired rate.  Using the inequality $|x^\g-y^\g|\le \g \max(x^{\g-1},y^{\g-1})|x-y|$, $\forall x,y >0$, we get
$$
u'\ge \frac{2u}{\g(c+2u)^{(\g-1)/\g}}.
$$
 Next, due to \eqref{mi}, \eqref{voi} and the assumption that $s\ge 2 \sqrt{d}$, we have
 $$
 c+2u=\frac{m_{1}+m_{0}}{2} +3v_0 \le sM^\g + 3\sqrt{d} M^\g \le 3sM^\g = 3s\varepsilon^\g\Lambda^\g. 
 $$
Moreover, \eqref{mi} implies that $u\ge (C_1/4)s\varepsilon^\g \Lambda^{-\g}$. Thus, 
$u'\ge C \varepsilon s^{1/\g} \Lambda^{1-2\g}$ and we conclude by recalling the definition of $\Lambda$.

}
 
  \section*{Acknowledgements}

The work of Olivier Collier has been conducted as part of the project Labex MME-DII (ANR11-LBX-0023-01). The work of A.B.Tsybakov was supported by GENES and by the French National Research Agency (ANR) under the grants IPANEMA (ANR-13-BSH1-0004-02) and Labex Ecodec (ANR-11-LABEX-0047).


%

  \end{document}